\providecommand{\U}[1]{\protect\rule{.1in}{.1in}}
\newtheorem{theorem}{Theorem}
\newtheorem{corollary}[theorem]{Corollary}
\newtheorem{lemma}[theorem]{Lemma}
\newtheorem{proposition}[theorem]{Proposition}
\newenvironment{proof}[1][Proof]{\noindent\textbf{#1.} }{\ \rule{0.5em}{0.5em}}
\begin{document}

\title{Eigenvalues and eigenfunctions of the Laplacian\\via inverse iteration with shift}
\author{{\small \textbf{Rodney Josu\'{e} BIEZUNER$^{\text{\thinspace a}}$, Grey
ERCOLE$^{\text{\thinspace a}}$, Breno Loureiro GIACCHINI$^{\text{\thinspace
b}}$, Eder Marinho MARTINS$^{\text{\thinspace c}}$}}\thanks{ \textit{E-mail
addresses:} rodney@mat.ufmg.br (R. J. Biezuner), grey@mat.ufmg.br (G. Ercole),
brenolg@ufmg.br (B. L. Giacchini) eder@iceb.ufop.br (E. Martins).}\\{\footnotesize {$^{\mathrm{a}}$} \textit{Departamento de Matem\'{a}tica -
ICEx, Universidade Federal de Minas Gerais,}}\\{\footnotesize \textit{Av. Ant\^{o}nio Carlos 6627, Caixa Postal 702,
30161-970, Belo Horizonte, MG, Brazil }} \\{\footnotesize {$^{\mathrm{b}}$} \textit{Departamento de F\'{\i}sica - ICEx,
Universidade Federal de Minas Gerais,}}\\{\footnotesize \textit{Av. Ant\^{o}nio Carlos 6627, 31270-901, Belo Horizonte,
MG, Brazil }} \\{\footnotesize {$^{\mathrm{c}}$} \textit{Departamento de Matem\'{a}tica -
ICEB, Universidade Federal de Ouro Preto,}}\\{\footnotesize \textit{Campus Universit\'{a}rio Morro do Cruzeiro, 35400-000,
Ouro Preto, MG, Brazil}} }
\maketitle

\begin{abstract}
\noindent In this paper we present an iterative method, inspired by the
inverse iteration with shift technique of finite linear algebra, designed to
find the eigenvalues and eigenfunctions of the Laplacian with homogeneous
Dirichlet boundary condition for arbitrary bounded domains $\Omega\subset
R^{N}$. This method, which has a direct functional analysis approach, does not
approximate the eigenvalues of the Laplacian as those of a finite linear
operator. It is based on the uniform convergence away from nodal surfaces and
can produce a simple and fast algorithm for computing the eigenvalues with
minimal computational requirements, instead of using the ubiquitous Rayleigh
quotient of finite linear algebra. Also, an alternative expression for the
Rayleigh quotient in the associated infinite dimensional Sobolev space which
avoids the integration of gradients is introduced and shown to be more
efficient. The method can also be used in order to produce the spectral
decomposition of any given function $u\in L^{2}(\Omega)$.

\end{abstract}

\noindent{\small {\textit{Keywords:} Laplacian, eigenvalues, eigenfunctions,
Fourier series, inverse iteration with shift, Rayleigh quotient.}}

\section{Introduction}

In \cite{BEM} we introduced an iterative method for computing the first
eigenpair of the $p$-Laplacian operator $\Delta_{p}u:=\operatorname{div}%
\left(  \left\vert \nabla u\right\vert ^{p-2}\nabla u\right)  $, $p>1$, with
homogeneous Dirichlet boundary condition in a bounded domain $\Omega
\subset\mathbb{R}^{N}$, $N\geqslant1$. The technique was inspired by the
inverse power method or inverse iteration of finite linear algebra.

In the present paper we concentrate in the special case $p=2$, the Laplace
operator $\Delta$, which was superficially dealt with in \cite{BEM}. Besides
clarifying some of the arguments sketched in that paper for this case and
providing some error estimates, our main purpose in this work is to show how
inverse iteration with shift in the presence of uniform convergence can be
used in order to obtain a fast and efficient method for computing the
eigenvalues and eigenfunctions of the Laplacian operator with homogeneous
Dirichlet boundary condition for any bounded domain $\Omega$. If the
eigenvalues or at least good estimates for them are \textit{a priori} known,
the method can produce the corresponding eigenfunctions with great speed and accuracy.

The technique can alternatively also be used as a fast process to obtain the
spectral decomposition of any function $u\in L^{2}\left(  \Omega\right)  \ $
(in other words, the Fourier series of $u$).

We remark that the application of the method to the special case of the
Laplacian operator is more natural since the Laplacian is a linear operator,
$L^{2}\left(  \Omega\right)  $ is a Hilbert space and the inverse operator
$-\Delta^{-1}$ is a self-adjoint and compact operator, therefore allowing the
complete characterization of its spectral structure, as well as having the
property that its eigenfunctions constitute a basis for $L^{2}\left(
\Omega\right)  $ (except for compactness, these properties are absent in the
$p$-Laplacian when $p\neq2$).

Our approach of the inverse iteration with shift is based on the following
iterative process started by a given function $u\in L^{2}(\Omega):$%
\begin{equation}
\phi_{0}:=u\text{ \ and \ }\left\{
\begin{array}
[c]{cccc}%
-\Delta_{\sigma}\phi_{n+1} & = & \phi_{n} & \text{in }\Omega,\\
\phi_{n+1} & = & 0 & \text{on\ }\partial\Omega
\end{array}
\right.  \label{poisson}%
\end{equation}
where $\sigma>0$ is a previously fixed \textit{shift} and $\Delta_{\sigma
}:=\Delta+\sigma I$ is the corresponding \textit{shifted operator}.

The sequence $\left\{  \phi_{n}\right\}  $ is then handled in order to produce
approximations for the pair $(\lambda_{u}^{\sigma},e_{u}^{\sigma})$ where
$\lambda_{u}^{\sigma}$ denotes the eigenvalue of the Laplacian appearing in
the spectral expansion of $u$ which is closest to $\sigma,$ and $e_{u}%
^{\sigma}$ denotes the eigenfunction obtained as the projection of $u$ on the
$\lambda_{u}^{\sigma}$-eingenspace.

Inverse iteration with shift is used in finite linear algebra in order to find
the eigenvalues and eigenfunctions of a finite-dimensional linear operator. As
an eigenvalue-finding procedure it is not as efficient as other methods, such
as the $QR$ algorithm. However, if very good estimates of the eigenvalues are
known in advance, its rate of convergence to both eigenvalues and
eigenfunctions can be very fast (see \cite{Trefenthen-Bau}, for instance).

This approach can be naturally extended to self-adjoint compact linear
operators in infinite-dimensional Hilbert spaces such as the Laplacian and
those arising in Sturm-Liouville problems. In spite of this, we have not been
able to find any reference in the literature to this approach being used in
the Laplacian context.

Since there is now a vast literature concerning the search for estimates for
the eigenvalues of the Laplacian, as well as the gaps between eigenvalues
(see, for instance, \cite{Kuttler-Sigillito, Hile-Protter, Yang, Cheng-Yang};
although particularly useful in our context would be lower bounds for the
difference between consecutive eigenvalues), these results can be used in
connection with the inverse iteration with shift algorithm to find
eigenfunctions of the Laplacian in arbitrary domains, as well as better
approximations for its eigenvalues. It must be emphasized, however, that as
with the finite linear method, the inverse iteration with shift method is not
capable to find all the eigenfunctions associated to a non-simple eigenvalue.
It can only find an eigenfunction of the associated eigenspace. In the generic
sense most domains have Laplacian spectra consisting only of simple
eigenvalues (see \cite{Uhlenbeck1, Uhlenbeck2}), although many domains of
practical interest have eigenvalues with multiplicity greater than one
(usually, domains which exhibit some type of symmetry, although not all of them).

Algorithm 1 below is the simplest version of the inverse iteration with shift
algorithm for computing one specific eigenvalue and corresponding
eigenfunction of the Laplacian.

\begin{algorithm}[!ht]
\caption{Inverse Iteration with Shift for Laplacian Eigenvalue and
Eigenfunction} \small{
\begin{algorithmic}[1]
\STATE $\phi_0 = u$
\STATE Set $x_0$ \qquad\qquad\qquad\qquad\!\!\!\! (point in $\Omega$ outside nodal surfaces, randomly chosen)
\STATE Set $\sigma$ \qquad\qquad\qquad\qquad\! (shift, usually eigenvalue estimate)
\STATE Set $m$ \qquad\qquad\qquad\qquad (number of iterations, depending on method used to solve Laplacian)
\FOR{$n=0,1,2,\ldots, m$}
\STATE Solve $-\Delta_{\sigma}\phi_{n+1}=\phi_{n} \text{ in }\Omega,\text{ \ }
\phi_{n+1}=0 \text{ on }\partial\Omega$
\ENDFOR
\RETURN
$\phi_{m+1}/\left\Vert \phi_{m+1}\right\Vert_{\infty}\text{\qquad\qquad\! ($L^{\infty}$-normalized eigenfunction)}$
\RETURN
$\phi_m(x_0)/\phi_{m+1}(x_0) + \sigma\text{\qquad(eigenvalue)}$
\end{algorithmic}
}
\label{inverse iteration with shift}
\end{algorithm}

%\newpage

\noindent In principle, the function $u$ at the start of Algorithm 1 should be
chosen so that it will have components in all eigenspaces of the Laplacian and
a random choice would suffice. However, in practice, due to rounding errors
any simple function can be used. In our numerical tests (see Section
\ref{tests}), we observed that even the unit constant function could be used
in order to obtain all the eigenvalues, even in a domain where it does not
have an infinite number of them in its spectral decomposition.

In Line 6 of Algorithm 1 any PDE solver can be used. This allows one to choose
the fastest solver available for a particular domain. In Line 9 the eigenvalue
is computed according to the uniform convergence theory developed in Section
\ref{unif}. Since uniform convergence occurs away from nodal surfaces, a point
$x_{0}\in\Omega$ not in a nodal surface must be chosen; because nodal surfaces
have zero $N$-dimensional measures, a random choice will suffice in the vast
majority of cases, even taking into account that nodal surfaces change as the
computed eigenfunction changes.

In finite linear algebra, the approximation to the eigenvalue is usually
computed using the Rayleigh quotient. In our context, the eigenvalue can also
be computed via the Rayleigh quotient of the approximated eigenfunction:
\begin{equation}
\mathcal{R}\left(  \phi_{n+1}\right)  =\frac{\left\langle \nabla\phi
_{n+1},\nabla\phi_{n+1}\right\rangle _{2}}{\left\langle \phi_{n+1},\phi
_{n+1}\right\rangle _{2}}=\frac{\left\Vert \nabla\phi_{n+1}\right\Vert
_{2}^{2}}{\left\Vert \phi_{n+1}\right\Vert _{2}^{2}}=\frac{\int_{\Omega
}\left\vert \nabla\phi_{n+1}\right\vert ^{2}\,dx}{\int_{\Omega}\phi_{n+1}%
^{2}\,dx}. \label{Rayleigh}%
\end{equation}
However, due to the high oscillatory nature of high frequency eigenfunctions,
in order to accurately compute the integral of the (squared) gradient of
eigenfunctions associated with these eigenvalues, a much finer grid needs to
be used, which affects the efficiency of the method. Therefore, the Rayleigh
quotient in its original form (\ref{Rayleigh}) is not recommended for the
computation of the eigenvalues of the Laplacian, unless one is prepared to
incur the higher computational costs (see also further comments in Section
\ref{final}).

Nonetheless, an integration by parts argument produces the following
alternative expression for the Rayleigh quotient that avoids computations
involving gradients:
\begin{equation}
\mathcal{R}(\phi_{n})=\sigma+\frac{\int_{\Omega}\phi_{n}\phi_{n-1}dx}%
{\int_{\Omega}\left\vert \phi_{n}\right\vert ^{2}dx}. \label{weakRQ1}%
\end{equation}
We obtain very satisfactory results by using this sequence in Line 9 of
Algorithm 1 to compute eingenvalues in our numerical tests (see Section
\ref{tests}).

Another alternative way to compute the eigenvalues is given by the quotient
\begin{equation}
\frac{\Vert\phi_{n}\Vert_{2}}{\Vert\phi_{n+1}\Vert_{2}}+\sigma=\frac
{\int_{\Omega}\phi_{n}^{2}\,dx}{\int_{\Omega}\phi_{n+1}^{2}\,dx}+\sigma,
\label{L2quotient}%
\end{equation}
when the shift $\sigma$ is chosen below the eigenvalue. This quotient also
gives accurate approximations for the eigenvalues even using relatively coarse
meshes; the computation of the integrals of the approximated eigenfunctions,
instead of their gradients, does not appear to be significantly affected by
the use of coarse grids (see Section \ref{tests}).

We believe that, differently from what happens in finite dimensions, where
much better and faster algorithms for finding the eigenvalues of linear
operators (matrices), specially self-adjoint operators, are available, the
inverse iteration with shift algorithm can be a very competitive method for
finding the eigenvalues of the Laplacian. Typical algorithms for computing
Laplacian eigenvalues involve the discretization of the Laplacian operator and
the computation of the eigenvalues of the resulting discretization matrix.
However, since only a small number of the eigenvalues of the discretization
matrix are good approximations to the Laplacian eigenvalues (the smaller
ones), huge matrices are necessary in order to obtain a sufficiently good
number of eigenvalues. And some problems, particularly those arising in the
study of quantum billiards, demand the computation of a very large number of
eigenvalues. Needless to say, besides the requirements of memory, the size of
the matrix makes it computationally costly to find its eigenvalues (see the
classical \cite{Hackbusch} book, the review \cite{Kuttler-Sigillito} and the
more recent work \cite{Heuveline} for details).

The inverse iteration with shift method, that requires only typical relatively
modest sizes for meshes in order to solve the Poisson equation with
homogeneous Dirichlet boundary condition, can be very competitive in terms of
memory allocation and processing time. This is true even when one considers
that in order to find accurate approximations for the highest order
eigenvalues and eigenfunctions sometimes one needs to refine the mesh, due to
the increase of oscillations.

Even if good estimates for the eigenvalues of a particular domain are not
known in advance, a few iterations of inverse iteration with shift should be
able to find good approximations to them, which can work as first estimates
for the shift on a second run of the algorithm. The first choices for the
shift might be concentrated around the numbers given by Weiyl's Law (see
\cite{Weyl} or \cite{Courant-Hilbert}).

The inverse iteration with shift method can also be used in order to find the
spectral decomposition of any function $u\in L^{2}\left(  \Omega\right)  $,
that is, in order to find its projections on the Laplacian eigenspaces. One
has only to be careful to eliminate spurious projections, that is, projections
which arise from rounding errors. This can be done through computing the
Fourier coefficient associated to each eigenspace. If this coefficient becomes
less than a specified very small tolerance, this projection can be safely
discarded as arising from rounding errors. The two algorithms can be combined
together in order to simultaneously find both the desired spectral
decomposition of a given function defined on a domain and the spectrum of the
Laplacian on it. The spectral decomposition algorithm is given below
(Algorithm 2). Once again, Line 11 can be replaced by (\ref{weakRQ1}) or
(\ref{L2quotient}).

\begin{algorithm}[!ht]
\caption{Spectral Decomposition} \small{
\begin{algorithmic}[1]
\STATE $\phi_0 = u$
\STATE Set $x_0$ \qquad\qquad\qquad\qquad\qquad\qquad\qquad\!\!\! (point in $\Omega$ randomly chosen)
\FOR{$k=0,1,2,\ldots$}
\STATE $\text{Set }\sigma_k \text{\qquad\qquad\qquad\qquad\qquad\qquad\ \ \ (shift)}$
\FOR{$n=0,1,2,\ldots$}
\STATE Solve $-\Delta_{\sigma_k}\phi_{n+1}^k=\phi_{n}^k \text{ in
}\Omega,\text{ \ } \phi_{n+1}^k=0 \text{ on }\partial\Omega$
\STATE Compute $\left\langle u,\phi_{n+1}^k/\left\Vert \phi_{n+1}^k\right\Vert_{2}\right\rangle_{2}\text{\qquad\ \ (Fourier coeficient)}$
\IF{$\left|\left\langle u,\phi_{n+1}^k/\left\Vert \phi_{n+1}^k\right\Vert_{2}\right\rangle_{2}\right|>\text{ tolerance}$}
\RETURN
$\phi_{n+1}^k/\left\Vert \phi_{n+1}^k\right\Vert_{2}\text{\qquad\qquad\ ($L^2$-normalized eigenfunction)}$
\RETURN
$\left\langle u,\phi_{n+1}^k/\left\Vert \phi_{n+1}^k\right\Vert_{2}\right\rangle_{2}\text{\qquad\!\!  (Fourier coefficient)}$
\RETURN
$\phi_n(x_0)/\phi_{n+1}(x_0) + \sigma_k\text{\ \ \ \ (eigenvalue)}$\\
\textbf{break}
\ENDIF
\ENDFOR
\ENDFOR
\end{algorithmic}
} \label{spectral decomposition}
\end{algorithm}

\noindent

Although for the sake of simplicity all computations here are done for the
Laplacian, the same algorithm can be used for similar elliptic operators.

This paper is organized as follows. The inverse iteration with shift sequence
is defined in Section \ref{Sec2}, where most of the notation used in this
paper is also established. In Section 3 some well-known results concerning the
Rayleigh quotient are recalled and proven for completeness. In Sections
\ref{L2} and \ref{unif} we discuss $L^{2}$ and uniform convergence of the
inverse iterated sequence, respectively. In the short Section \ref{normaliz}
we make some considerations about the normalization process on each step of
(\ref{poisson}) which is useful for computational purposes, and in Section
\ref{tests} we present the results of some numerical experiments made in
simple domains.

Finally, in Section \ref{final} we discuss if the rate of convergence of the
method could theoretically be improved through the use of the inverse
iteration with shift given by the Rayleigh quotient, as it is standard
practice in finite linear algebra.

\section{Definition of the inverse iteration with shift sequence\label{Sec2}}

Let $\Omega$ be a bounded domain in $\mathbb{R}^{N}$ and $\mathcal{H}=\left\{
e_{k}\right\}  _{k=1}^{\infty}\subset H_{0}^{1}\left(  \Omega\right)  $ be an
orthogonal (not necessarily normalized) basis for $L^{2}\left(  \Omega\right)
$ consisting of eigenfunctions of the Laplacian operator with homogeneous
Dirichlet boundary condition, that is,
\[
\left\{
\begin{array}
[c]{ll}%
-\Delta e_{k}=\lambda_{k}e_{k} & \text{ \ \ in }\Omega,\\
e_{k}=0 & \text{ \ \ on\ }\partial\Omega,
\end{array}
\right.
\]
where $\left\{  \lambda_{k}\right\}  _{k=1}^{\infty}$ is the non-decreasing
sequence of eigenvalues of the Laplacian, counting multiplicities:
\begin{equation}
0<\lambda_{1}<\lambda_{2}\leqslant\ldots\label{eigenlap}%
\end{equation}

Let $\sigma>0$ and define the \textit{shifted operator}
\begin{equation}
\Delta_{\sigma}:=\Delta+\sigma I. \label{OpS}%
\end{equation}
It follows that $e_{k}$ is also an eigenfunction of $-\Delta_{\sigma}$
corresponding to the eigenvalue $\lambda_{k}-\sigma.$

Conversely, if $\lambda$ is an eigenvalue of $-\Delta_{\sigma}$, then
$\lambda=\lambda_{k}-\sigma$ for some $k$. Thus, the spectrum of the shifted
operator equals the spectrum of the Laplacian operator shifted to the left by
$\sigma$, while the corresponding eigenspaces are the same.

Given $u\in L^{2}\left(  \Omega\right)  $, let
\[
u=\sum_{k=1}^{\infty}\alpha_{k}e_{k}%
\]
be the Fourier expansion of $u$, so that the Fourier coefficients $\alpha_{k}$
are given by
\[
\alpha_{k}=\frac{\left\langle u,e_{k}\right\rangle _{2}}{\left\Vert
e_{k}\right\Vert _{2}^{2}}=\frac{\int_{\Omega}ue_{k}\,dx}{\int_{\Omega}%
e_{k}^{2}\,dx}.
\]
Denote by $\lambda_{u}^{1}$ the least eigenvalue whose associated eigenspace
is not orthogonal to $u.$ That is,
\[
\lambda_{u}^{1}:=\lambda_{k_{1}}\text{ \ \ where }k_{1}=\min\left\{
k:\alpha_{k}\neq0\right\}  .
\]
In other words, $\lambda_{u}^{1}$ is the first eigenvalue such that $u$ has a
non-zero component in the corresponding eigenspace. Note that if $r_{1}$ is
the multiplicity of $\lambda_{k_{1}}$ then
\[
\lambda_{u}^{1}=\lambda_{k_{1}}=\lambda_{k_{1}+1}=\cdots=\lambda_{k_{1}%
+r_{1}-1}.
\]
We will denote by $e_{u}^{1}$ the orthogonal projection of $u$ on the
$\lambda_{u}^{1}$-eigenspace, that is
\[
e_{u}^{1}:=\alpha_{k_{1}}e_{k_{1}}+\alpha_{k_{1}+1}e_{k_{1}+1}+\cdots
+\alpha_{k_{1}+r_{1}-1}e_{k_{1}+r_{1}-1}=\sum_{\lambda_{k}=\lambda_{u}^{1}%
}\alpha_{k}e_{k}.
\]
Thus, the Fourier expansion of $u$ can be rewritten as
\[
u=\sum_{\lambda_{k}\geqslant\lambda_{u}^{1}}\alpha_{k}e_{k}=e_{u}^{1}%
+\sum_{\lambda_{k}>\lambda_{u}^{1}}\alpha_{k}e_{k}=e_{u}^{1}+\sum_{k\geqslant
k_{1}+r_{1}}\alpha_{k}e_{k}.
\]
Proceeding in this way, denoting by $e_{u}^{j}$ the orthogonal projection of
$u$ on the $\lambda_{u}^{j}$-eigenspace which is the $j^{\text{th}}%
$-eigenspace not orthogonal to $u$, the eigenfunction expansion of $u$ can be
written in terms of its non-zero components in the eigenspaces of the
Laplacian as
\begin{equation}
u=\sum_{j=1}^{M}e_{u}^{j} \label{6}%
\end{equation}
where either $M$ is a positive integer or, as in most cases, $M=\infty$, and
the corresponding sequence of eigenvalues $\left\{  \lambda_{u}^{j}\right\}
_{j=1}^{\infty}$ is (strictly) increasing
\[
0<\lambda_{u}^{1}<\lambda_{u}^{2}<\ldots
\]

As is well known from the theory of compact linear operators, if $\sigma$ does
not belong to the spectrum of $-\Delta$ we have that $(-\Delta_{\sigma}%
)^{-1}:L^{2}\left(  \Omega\right)  \longrightarrow L^{2}\left(  \Omega\right)
$ is a continuous, compact and invertible operator. Therefore, whenever
$\sigma$ is not an eigenvalue of the Laplacian, we can define a sequence
$\left\{  \phi_{n}\right\}  _{n\in\mathbb{N}}\subset H_{0}^{1}\left(
\Omega\right)  $ by inverse iteration setting
\begin{equation}
\phi_{0}=u\text{ \ and \ }\left\{
\begin{array}
[c]{ll}%
-\Delta_{\sigma}\phi_{n+1}=\phi_{n} & \text{ \ \ in }\Omega,\\
\phi_{n+1}=0 & \text{ \ \ on\ }\partial\Omega.
\end{array}
\right.  \label{iteration}%
\end{equation}

For each $u\in L^{2}\left(  \Omega\right)  $ and $\sigma>0$ not in the
Laplacian spectrum consider the sequence $\left\{  \phi_{n}\right\}  $ defined
by inverse iteration in (\ref{iteration}). Since $\phi_{n+1}=\left(
-\Delta_{\sigma}\right)  ^{-1}\phi_{n}$, it follows from (\ref{6}) that the
eigenfunction expansion of $\phi_{n}$ is given by
\[
\phi_{n}=\sum_{j=1}^{M}\frac{1}{\left(  \lambda_{u}^{j}-\sigma\right)  ^{n}%
}e_{u}^{j}.
\]
Let $\lambda_{u}^{\sigma}$ be the Laplacian eigenvalue appearing in the
spectral decomposition of $u$ which is closest to $\sigma$, i.e.,
\begin{equation}
\left\vert \lambda_{u}^{\sigma}-\sigma\right\vert =\min_{j\in\mathbb{N}%
}\left\vert \lambda_{u}^{j}-\sigma\right\vert . \label{usigma}%
\end{equation}

Now let $e_{u}^{\sigma}$ denote the projection of $u$ on the $\lambda
_{u}^{\sigma}$-eigenspace. Then%
\begin{equation}
\left\langle u,e_{u}^{\sigma}\right\rangle _{2}=\left\langle e_{u}^{\sigma
},e_{u}^{\sigma}\right\rangle _{2}=\left\Vert e_{u}^{\sigma}\right\Vert
_{2}^{2} \label{eusigma}%
\end{equation}
and we can write%

\begin{align*}
\phi_{n}  &  =\frac{1}{\left(  \lambda_{u}^{\sigma}-\sigma\right)  ^{n}}%
e_{u}^{\sigma}+\sum_{\left\vert \lambda_{u}^{j}-\sigma\right\vert >\left\vert
\lambda_{u}^{\sigma}-\sigma\right\vert }^{M}\frac{1}{(\lambda_{u}^{j}%
-\sigma)^{n}}e_{u}^{j}\\
&  =\frac{1}{\left(  \lambda_{u}^{\sigma}-\sigma\right)  ^{n}}\left(
e_{u}^{\sigma}+\sum_{\left\vert \lambda_{u}^{j}-\sigma\right\vert >\left\vert
\lambda_{u}^{\sigma}-\sigma\right\vert }^{M}\left(  \frac{\lambda_{u}^{\sigma
}-\sigma}{\lambda_{j}-\sigma}\right)  ^{n}e_{u}^{j}\right)  ,
\end{align*}
or
\begin{equation}
\phi_{n}=\frac{1}{\left(  \lambda_{u}^{\sigma}-\sigma\right)  ^{n}}\left(
e_{u}^{\sigma}+\psi_{n}\right)  , \label{0}%
\end{equation}
where
\begin{equation}
\psi_{n}=\sum_{\left\vert \lambda_{u}^{j}-\sigma\right\vert >\left\vert
\lambda_{u}^{\sigma}-\sigma\right\vert }^{M}\left(  \frac{\lambda_{u}^{\sigma
}-\sigma}{\lambda_{u}^{j}-\sigma}\right)  ^{n}e_{u}^{j}. \label{3}%
\end{equation}

Throughout this paper, we will denote by $\lambda_{u}^{\tau}$ the Laplacian
eigenvalue appearing in the spectral decomposition of $u$ which is second
closest to $\sigma$, i.e.,
\begin{equation}
\left\vert \lambda_{u}^{\tau}-\sigma\right\vert =\min_{\substack{j\in
\mathbb{N}\\\lambda_{u}^{j}\neq\lambda_{u}^{\sigma}}}\left\vert \lambda
_{u}^{j}-\sigma\right\vert . \label{utau}%
\end{equation}

We will also denote by $u_{n}$ the component of $u$ in the direction of
$\dfrac{\phi_{n}}{\left\Vert \phi_{n}\right\Vert _{2}},$ that is:%
\begin{equation}
u_{n}:=\left\langle u,\frac{\phi_{n}}{\left\Vert \phi_{n}\right\Vert _{2}%
}\right\rangle _{2}\dfrac{\phi_{n}}{\left\Vert \phi_{n}\right\Vert _{2}%
}=\left(  \int_{\Omega}u\frac{\phi_{n}}{\left\Vert \phi_{n}\right\Vert _{2}%
}dx\right)  \dfrac{\phi_{n}}{\left\Vert \phi_{n}\right\Vert _{2}}. \label{un}%
\end{equation}

\section{$L^{2}$-convergence of inverse iteration with shift\label{L2}}

In this section we expose the $L^{2}$-approach of the inverse iteration with
shift. Firstly let us state some well-known results concerning the Rayleigh
quotient $\mathcal{R}:H_{0}^{1}\left(  \Omega\right)  \backslash\left\{
0\right\}  \longrightarrow\mathbb{R}$ defined by%
\begin{equation}
\mathcal{R}\left(  v\right)  =\frac{\int_{\Omega}\left\vert \nabla
v\right\vert ^{2}\,dx}{\int_{\Omega}v^{2}\,dx}. \label{originalQr}%
\end{equation}
For ease of consultation, we give short proofs of these results.

\bigskip

\begin{proposition}
A function $u\in H_{0}^{1}\left(  \Omega\right)  \backslash\left\{  0\right\}
$ is a critical point of $\mathcal{R}$ if and only if $u$ is an eigenfunction
of the Laplacian with homogeneous Dirichlet boundary condition and
$\mathcal{R}\left(  u\right)  $ is the corresponding eigenvalue.
\end{proposition}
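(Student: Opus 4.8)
The plan is to prove both directions by computing the first variation (Gateaux derivative) of the Rayleigh quotient $\mathcal{R}$ and determining when it vanishes. Write $\mathcal{R}(v)=N(v)/D(v)$ where $N(v)=\int_{\Omega}|\nabla v|^{2}\,dx$ and $D(v)=\int_{\Omega}v^{2}\,dx$. For $u\in H_{0}^{1}(\Omega)\setminus\{0\}$ and an arbitrary test function $\varphi\in H_{0}^{1}(\Omega)$, I would form $\mathcal{R}(u+t\varphi)$ and differentiate at $t=0$ using the quotient rule. The numerator and denominator are quadratic in their argument, so the derivatives are routine: $\frac{d}{dt}N(u+t\varphi)|_{t=0}=2\int_{\Omega}\nabla u\cdot\nabla\varphi\,dx$ and $\frac{d}{dt}D(u+t\varphi)|_{t=0}=2\int_{\Omega}u\varphi\,dx$. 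This gives the critical-point condition $\langle D\mathcal{R}(u),\varphi\rangle=0$ for all $\varphi$.

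Carrying out the quotient rule, the condition that $u$ be a critical point, namely $\frac{d}{dt}\mathcal{R}(u+t\varphi)|_{t=0}=0$ for every $\varphi\in H_{0}^{1}(\Omega)$, becomes
\begin{equation*}
\int_{\Omega}\nabla u\cdot\nabla\varphi\,dx=\mathcal{R}(u)\int_{\Omega}u\varphi\,dx\qquad\text{for all }\varphi\in H_{0}^{1}(\Omega).
\end{equation*}
The left-hand side is recognized immediately as the weak formulation of $-\Delta u$ tested against $\varphi$. Thus the critical-point equation says precisely that $u$ is a weak solution of $-\Delta u=\mathcal{R}(u)\,u$ in $\Omega$ with $u\in H_{0}^{1}(\Omega)$, i.e. $u$ is a (weak) Dirichlet eigenfunction of the Laplacian with eigenvalue $\mathcal{R}(u)$. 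This settles the forward implication.

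For the converse, I would simply reverse the computation: if $u$ is an eigenfunction with $-\Delta u=\lambda u$ and $u=0$ on $\partial\Omega$, then testing this equation against any $\varphi\in H_{0}^{1}(\Omega)$ and integrating by parts (using that the boundary term vanishes because $\varphi\in H_{0}^{1}$) yields $\int_{\Omega}\nabla u\cdot\nabla\varphi\,dx=\lambda\int_{\Omega}u\varphi\,dx$. Taking $\varphi=u$ shows $N(u)=\lambda D(u)$, so $\mathcal{R}(u)=\lambda$; substituting this back shows the variational identity above holds with $\mathcal{R}(u)$ in place of $\lambda$, hence the first variation vanishes for all $\varphi$ and $u$ is a critical point.

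There is no serious obstacle here; the result is standard and the computation is elementary. The only points requiring a little care are justifying differentiation under the integral sign (immediate, since everything is polynomial in $t$ with $H_{0}^{1}$ coefficients) and, on the converse side, being clear that the integration by parts is the weak formulation rather than requiring classical regularity of $u$ — phrasing everything variationally avoids any appeal to elliptic regularity and keeps the equivalence clean in $H_{0}^{1}(\Omega)$.
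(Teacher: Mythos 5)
Your proof is correct and follows essentially the same route as the paper: both compute the first variation of $\mathcal{R}$ (the paper writes $\mathcal{R}^{\prime}(u)v=\frac{2}{\Vert u\Vert_{2}^{2}}\left[\langle\nabla u,\nabla v\rangle_{2}-\mathcal{R}(u)\langle u,v\rangle_{2}\right]$, which is exactly your quotient-rule computation) and observe that its vanishing for all test functions is precisely the weak formulation of $-\Delta u=\mathcal{R}(u)\,u$ with homogeneous Dirichlet condition. Your explicit treatment of the converse and the remark about avoiding elliptic regularity are harmless elaborations of what the paper compresses into a single if-and-only-if equivalence.
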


\begin{proof}
Given $v\in H_{0}^{1}\left(  \Omega\right)  $, we have%
\[
\mathcal{R}^{\prime}\left(  u\right)  v=\frac{2}{\left\Vert u\right\Vert
_{2}^{2}}\left[  \left\langle \nabla u,\nabla v\right\rangle _{2}%
-\mathcal{R}\left(  u\right)  \left\langle u,v\right\rangle _{2}\right]  .
\]
Therefore, $\mathcal{R}^{\prime}\left(  u\right)  =0$ if and only if%
\[
\int_{\Omega}\nabla u\cdot\nabla v=\mathcal{R}\left(  u\right)  \int_{\Omega
}uv
\]
for all $v\in W_{0}^{1,2}\left(  \Omega\right)  $, that is, $u$ is a weak
solution of%
\[
\left\{
\begin{array}
[c]{ll}%
-\Delta u=\mathcal{R}\left(  u\right)  u & \text{ \ \ in }\Omega,\\
u=0 & \text{ \ \ on\ }\partial\Omega.
\end{array}
\right.
\]

\end{proof}

\bigskip

\begin{corollary}
\label{O2}The Rayleigh quotient gives a quadratically accurate estimate for
the Dirichlet Laplacian eigenvalues, that is, if $u$ is an eigenfunction of
the Laplacian with homogeneous Dirichlet boundary condition with
$\mathcal{R}\left(  u\right)  $ as the corresponding eigenvalue, then
\[
\mathcal{R}\left(  v\right)  -\mathcal{R}\left(  u\right)  =O\left(
\left\Vert v-u\right\Vert ^{2}\right)
\]
as $v\rightarrow u$ in $L^{2}(\Omega)$.
\end{corollary}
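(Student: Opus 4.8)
The plan is to avoid an abstract second-order Taylor expansion and instead reduce the difference $\mathcal{R}(v)-\mathcal{R}(u)$ to an explicit algebraic identity in which the quadratic nature of the error is manifest. Write $\lambda:=\mathcal{R}(u)$, so that by the Proposition $-\Delta u=\lambda u$ with homogeneous Dirichlet data, and set $w:=v-u\in H_{0}^{1}(\Omega)$. Expanding both the numerator and denominator of $\mathcal{R}(v)=\mathcal{R}(u+w)$ gives
\[
\mathcal{R}(v)=\frac{\left\Vert \nabla u\right\Vert_{2}^{2}+2\left\langle \nabla u,\nabla w\right\rangle_{2}+\left\Vert \nabla w\right\Vert_{2}^{2}}{\left\Vert u\right\Vert_{2}^{2}+2\left\langle u,w\right\rangle_{2}+\left\Vert w\right\Vert_{2}^{2}}.
\]

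First I would use the eigenvalue equation. Testing $-\Delta u=\lambda u$ against $w\in H_{0}^{1}(\Omega)$ and integrating by parts yields $\left\langle \nabla u,\nabla w\right\rangle_{2}=\lambda\left\langle u,w\right\rangle_{2}$, and the same computation with $w$ replaced by $u$ gives $\left\Vert \nabla u\right\Vert_{2}^{2}=\lambda\left\Vert u\right\Vert_{2}^{2}$ (this is just $\mathcal{R}(u)=\lambda$). Substituting and subtracting $\lambda$ makes every term linear in $w$ cancel between numerator and denominator --- which is precisely the vanishing of $\mathcal{R}^{\prime}(u)$ from the Proposition, reorganized --- and leaves the clean identity
\[
\mathcal{R}(v)-\mathcal{R}(u)=\frac{\left\Vert \nabla w\right\Vert_{2}^{2}-\lambda\left\Vert w\right\Vert_{2}^{2}}{\left\Vert v\right\Vert_{2}^{2}}.
\]

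It then remains to estimate the two factors. The denominator is harmless: since $\left\Vert v\right\Vert_{2}\geqslant\left\Vert u\right\Vert_{2}-\left\Vert w\right\Vert_{2}$, it is bounded below by $\tfrac{1}{2}\left\Vert u\right\Vert_{2}^{2}>0$ once $w$ is small. For the numerator I would bound $\left\vert\left\Vert \nabla w\right\Vert_{2}^{2}-\lambda\left\Vert w\right\Vert_{2}^{2}\right\vert\leqslant\left\Vert \nabla w\right\Vert_{2}^{2}+\lambda\left\Vert w\right\Vert_{2}^{2}$, and then control the $L^{2}$ term by the gradient term through the Poincar\'e inequality $\left\Vert w\right\Vert_{2}^{2}\leqslant C_{\Omega}\left\Vert \nabla w\right\Vert_{2}^{2}$, giving a bound of the form $C\left\Vert \nabla w\right\Vert_{2}^{2}$. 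This is exactly $O(\left\Vert v-u\right\Vert^{2})$, the estimate claimed.

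The one point that genuinely needs care, and which I regard as the main obstacle, is the meaning of the norm $\left\Vert \cdot\right\Vert$ and of the limit $v\to u$. The identity above shows the error is governed by $\left\Vert \nabla w\right\Vert_{2}^{2}$, so the quadratic estimate holds with $\left\Vert \cdot\right\Vert$ the $H_{0}^{1}$-norm (equivalently, the Dirichlet/gradient norm, which are equivalent on $H_{0}^{1}(\Omega)$ by Poincar\'e). If one insisted on the pure $L^{2}$-norm, the term $\left\Vert \nabla w\right\Vert_{2}^{2}$ need not be controlled by $\left\Vert w\right\Vert_{2}^{2}$ --- a perturbation that is $L^{2}$-small can be $H^{1}$-large --- so the statement must be understood with the error measured in $H_{0}^{1}$. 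With that reading the proof is complete and entirely elementary.
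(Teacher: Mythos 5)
Your proof is correct, and it takes a genuinely different route from the paper's. The paper's entire proof is one line: since $\mathcal{R}^{\prime}(u)=0$ by the preceding Proposition, Taylor's formula gives the quadratic estimate. You avoid the abstract Taylor expansion altogether and instead derive the exact identity
\[
\mathcal{R}(v)-\mathcal{R}(u)=\frac{\left\Vert \nabla w\right\Vert _{2}
^{2}-\lambda\left\Vert w\right\Vert _{2}^{2}}{\left\Vert v\right\Vert _{2}
^{2}},\qquad w:=v-u,
\]
whose derivation is correct: the linear terms cancel exactly because $\left\langle \nabla u,\nabla w\right\rangle _{2}=\lambda\left\langle u,w\right\rangle _{2}$ for all $w\in H_{0}^{1}(\Omega)$, and your bounds on the two factors (denominator bounded below by $\tfrac{1}{2}\left\Vert u\right\Vert _{2}^{2}$ for $w$ small, numerator bounded by $(1+\lambda C_{\Omega})\left\Vert \nabla w\right\Vert _{2}^{2}$ via Poincar\'{e}) are fine. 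What each approach buys: the paper's is shorter, but it silently relies on $\mathcal{R}$ being twice Fr\'{e}chet differentiable on $H_{0}^{1}(\Omega)\backslash\{0\}$ with a second derivative locally bounded near $u$, none of which is stated or proved; your identity is elementary, yields an explicit constant of the form $2(1+\lambda C_{\Omega})/\left\Vert u\right\Vert _{2}^{2}$, and even exhibits the sign of the error (nonnegative when $\lambda=\lambda_{1}$, since then $\left\Vert \nabla w\right\Vert _{2}^{2}\geqslant\lambda_{1}\left\Vert w\right\Vert _{2}^{2}$).

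Your closing caveat is also well taken, and it is sharper than the paper's own formulation. As stated (``as $v\rightarrow u$ in $L^{2}(\Omega)$''), the corollary is false if $\left\Vert \cdot\right\Vert $ is the $L^{2}$-norm: taking $v=u+\lambda_{k}^{-1/4}e_{k}$ with $e_{k}$ an $L^{2}$-normalized eigenfunction orthogonal to $u$, one gets $\left\Vert v-u\right\Vert _{2}\rightarrow0$ while $\mathcal{R}(v)-\lambda\sim\lambda_{k}^{1/2}\rightarrow\infty$. The quadratic bound holds in the $H_{0}^{1}$-norm, which is also the only topology in which the paper's Taylor argument is meaningful, since $\mathcal{R}$ is not even defined on $L^{2}$-neighborhoods of $u$. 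This matters downstream: the paper invokes the corollary in the proof of (\ref{rate}) with $O\left(  \left\Vert u_{n}-e_{u}^{\sigma}\right\Vert _{2}^{2}\right)  $, which the $L^{2}$ statement does not actually supply. The conclusion survives because $u_{n}\rightarrow e_{u}^{\sigma}$ also in $H_{0}^{1}$ with the same exponential rate; indeed, the gradient analogue of Theorem \ref{2main}(i) holds, since by orthogonality of the eigenspaces in $H_{0}^{1}$ one has $\left\Vert \nabla\psi_{n}\right\Vert _{2}^{2}=\sum\left\vert (\lambda_{u}^{\sigma}-\sigma)/(\lambda_{u}^{j}-\sigma)\right\vert ^{2n}\lambda_{u}^{j}\left\Vert e_{u}^{j}\right\Vert _{2}^{2}$ and $\lambda_{u}^{j}/(\lambda_{u}^{j}-\sigma)^{2}$ is bounded over the summation range. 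So your version, with the norm identified as that of $H_{0}^{1}(\Omega)$, is the statement the paper actually needs and uses.
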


\begin{proof}
If follows immediately from Taylor's formula, since $\mathcal{R}^{\prime
}\left(  u\right)  =0.$
\end{proof}

\bigskip

Now we give the main result of this section. Before this, we note from
(\ref{0}) of Section \ref{Sec2} that
\begin{equation}
\dfrac{\phi_{n}}{\left\Vert \phi_{n}\right\Vert _{2}}=\pm\dfrac{e_{u}^{\sigma
}+\psi_{n}}{\left\Vert e_{u}^{\sigma}+\psi_{n}\right\Vert _{2}}, \label{24}%
\end{equation}
where the sign of the right-hand side will depend on whether the shift
$\sigma$ is taken above or below the eigenvalue. With respect to $n:$ if the
shift is taken below the eigenvalue, the sign will always be positive, whereas
if the shift is chosen above the eigenvalue the sign will be $(-1)^{n}.$

We note from (\ref{un}) and (\ref{24}) that
\begin{equation}
u_{n}=\left\langle u,\frac{e_{u}^{\sigma}+\psi_{n}}{\left\Vert e_{u}^{\sigma
}+\psi_{n}\right\Vert _{2}}\right\rangle _{2}\frac{e_{u}^{\sigma}+\psi_{n}%
}{\left\Vert e_{u}^{\sigma}+\psi_{n}\right\Vert _{2}} \label{un2}%
\end{equation}
and also remark that
\begin{equation}
\mathcal{R}(\phi_{n})=\sigma+\frac{\int_{\Omega}\phi_{n}\phi_{n-1}dx}%
{\int_{\Omega}\left\vert \phi_{n}\right\vert ^{2}dx}. \label{weakRQ}%
\end{equation}
This fact follows from an integration by parts after multiplying the equation
$-\Delta_{\sigma}\phi_{n}=\phi_{n-1}$ by $\phi_{n}.$ From the computational
viewpoint, (\ref{weakRQ}) has the advantage of avoiding the computation of the
gradient $\nabla\phi_{n}$ as required in (\ref{originalQr}).

\bigskip

\begin{theorem}
\label{2main}\textit{Let }$u\in L^{2}\left(  \Omega\right)  .$

\begin{enumerate}
\item[(i)] \textit{We have}
\[
\left\Vert \psi_{n}\right\Vert _{2}\leqslant\left\vert \frac{\lambda
_{u}^{\sigma}-\sigma}{\lambda_{u}^{\tau}-\sigma}\right\vert ^{n}\left\Vert
u\right\Vert _{2}.
\]
\textit{In particular, }$\psi_{n}\rightarrow0$\textit{ in }$L^{2}\left(
\Omega\right)  $\textit{ with an exponential rate.}

\item[(ii)] \textit{There exists }$n_{0}\in\mathbb{N}$\textit{ such that}
\[
\left\Vert \frac{e_{u}^{\sigma}+\psi_{n}}{\left\Vert e_{u}^{\sigma}+\psi
_{n}\right\Vert _{2}}-\frac{e_{u}^{\sigma}}{\left\Vert e_{u}^{\sigma
}\right\Vert _{2}}\right\Vert _{2}\leqslant\frac{4}{\left\Vert e_{u}^{\sigma
}\right\Vert _{2}}\left\Vert \psi_{n}\right\Vert _{2},
\]
\textit{for all }$n\geqslant n_{0}.$\textit{ In particular,}
\begin{equation}
\frac{e_{u}^{\sigma}+\psi_{n}}{\left\Vert e_{u}^{\sigma}+\psi_{n}\right\Vert
_{2}}\rightarrow\dfrac{e_{u}^{\sigma}}{\left\Vert e_{u}^{\sigma}\right\Vert
_{2}}\text{ \ \ \textit{in} }L^{2}\left(  \Omega\right)  \label{2iib}%
\end{equation}
\textit{with an exponential rate.}

\item[(iii)] \textit{The following convergences hold:}
\begin{equation}
\dfrac{\left\Vert \phi_{n}\right\Vert _{2}}{\left\Vert \phi_{n+1}\right\Vert
_{2}}\rightarrow\left\vert \lambda_{u}^{\sigma}-\sigma\right\vert ,
\label{2tolamb}%
\end{equation}%
\begin{equation}
u_{n}\rightarrow e_{u}^{\sigma}\text{ \ \ \textit{in} }L^{2}\left(
\Omega\right)  \label{2toe}%
\end{equation}
where $u_{n}$ is defined by (\ref{un}), and
\begin{equation}
\mathcal{R}\left(  \phi_{n}\right)  \rightarrow\lambda_{u}^{\sigma}
\label{2iva}%
\end{equation}
\textit{with}
\begin{equation}
\mathcal{R}\left(  \phi_{n}\right)  -\lambda_{u}^{\sigma}=O\left(  \left\vert
\frac{\lambda_{u}^{\sigma}-\sigma}{\lambda_{u}^{\tau}-\sigma}\right\vert
^{2n}\right)  . \label{rate}%
\end{equation}

\end{enumerate}
\end{theorem}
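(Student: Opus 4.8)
The plan is to run everything off the explicit spectral expansion (\ref{0})--(\ref{3}) together with the fact that the projections $e_u^j$ lie in mutually orthogonal eigenspaces, orthogonality holding both for $\left\langle\cdot,\cdot\right\rangle_2$ and for the Dirichlet product $\left\langle\nabla\cdot,\nabla\cdot\right\rangle_2$ (eigenfunctions for distinct eigenvalues are orthogonal in both). Once this is in hand the four assertions reduce to Parseval-type identities and elementary estimates, except for the final rate, which is the only delicate point. I would prove (i) first, since (ii) and the whole of (iii) feed on it.

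For part (i) I would apply Parseval to (\ref{3}), getting $\left\Vert\psi_n\right\Vert_2^2=\sum\left(\frac{\lambda_u^\sigma-\sigma}{\lambda_u^j-\sigma}\right)^{2n}\left\Vert e_u^j\right\Vert_2^2$ over the indices with $\left\vert\lambda_u^j-\sigma\right\vert>\left\vert\lambda_u^\sigma-\sigma\right\vert$. Each such index satisfies $\left\vert\lambda_u^j-\sigma\right\vert\geqslant\left\vert\lambda_u^\tau-\sigma\right\vert$ by the definition (\ref{utau}), so every ratio is at most $\left\vert(\lambda_u^\sigma-\sigma)/(\lambda_u^\tau-\sigma)\right\vert$; pulling this common factor out and using $\sum_j\left\Vert e_u^j\right\Vert_2^2=\left\Vert u\right\Vert_2^2$ gives the bound, with exponential decay because the factor is $<1$ by (\ref{usigma}). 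For part (ii) the key observation is $e_u^\sigma\perp\psi_n$, whence $\left\Vert e_u^\sigma+\psi_n\right\Vert_2^2=\left\Vert e_u^\sigma\right\Vert_2^2+\left\Vert\psi_n\right\Vert_2^2\geqslant\left\Vert e_u^\sigma\right\Vert_2^2$. Writing
\[
\frac{e_u^\sigma+\psi_n}{\left\Vert e_u^\sigma+\psi_n\right\Vert_2}-\frac{e_u^\sigma}{\left\Vert e_u^\sigma\right\Vert_2}=e_u^\sigma\!\left(\frac{1}{\left\Vert e_u^\sigma+\psi_n\right\Vert_2}-\frac{1}{\left\Vert e_u^\sigma\right\Vert_2}\right)+\frac{\psi_n}{\left\Vert e_u^\sigma+\psi_n\right\Vert_2},
\]
I would bound the first term by $\left\Vert\psi_n\right\Vert_2/\left\Vert e_u^\sigma\right\Vert_2$ via the reverse triangle inequality and the second likewise, so that the stated constant $4$ is a comfortably safe bound; a threshold $n_0$ furnished by part (i) makes the estimate uniform, and (\ref{2iib}) is then immediate from $\left\Vert\psi_n\right\Vert_2\to0$.

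For part (iii) the first two limits are soft. From (\ref{0}) and Pythagoras, $\left\Vert\phi_n\right\Vert_2=\left\vert\lambda_u^\sigma-\sigma\right\vert^{-n}\sqrt{\left\Vert e_u^\sigma\right\Vert_2^2+\left\Vert\psi_n\right\Vert_2^2}$, so $\left\Vert\phi_n\right\Vert_2/\left\Vert\phi_{n+1}\right\Vert_2$ equals $\left\vert\lambda_u^\sigma-\sigma\right\vert$ times a ratio of square roots tending to $1$, giving (\ref{2tolamb}). For (\ref{2toe}) I would pass to the limit in (\ref{un2}): by (ii) the unit vector converges in $L^2$, the inner product is continuous, and (\ref{eusigma}) identifies the limiting coefficient as $\left\Vert e_u^\sigma\right\Vert_2$, so $u_n\to e_u^\sigma$. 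The quantitative statement (\ref{rate}) is the real content. Since $\mathcal{R}$ is scale invariant, $\mathcal{R}(\phi_n)=\mathcal{R}(e_u^\sigma+\psi_n)$; expanding numerator and denominator with the two orthogonalities and the eigenrelation $\left\Vert\nabla e_u^j\right\Vert_2^2=\lambda_u^j\left\Vert e_u^j\right\Vert_2^2$ yields
\[
\mathcal{R}(\phi_n)-\lambda_u^\sigma=\frac{\displaystyle\sum_j(\lambda_u^j-\lambda_u^\sigma)\left(\frac{\lambda_u^\sigma-\sigma}{\lambda_u^j-\sigma}\right)^{2n}\left\Vert e_u^j\right\Vert_2^2}{\left\Vert e_u^\sigma\right\Vert_2^2+\displaystyle\sum_j\left(\frac{\lambda_u^\sigma-\sigma}{\lambda_u^j-\sigma}\right)^{2n}\left\Vert e_u^j\right\Vert_2^2},
\]
both sums over $\left\vert\lambda_u^j-\sigma\right\vert>\left\vert\lambda_u^\sigma-\sigma\right\vert$.

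The hard part is extracting the clean $2n$-rate from this expression. The denominator is harmless, being $\geqslant\left\Vert e_u^\sigma\right\Vert_2^2>0$, but the numerator carries the unbounded weights $\lambda_u^j-\lambda_u^\sigma$, so factoring out $\left\vert(\lambda_u^\sigma-\sigma)/(\lambda_u^\tau-\sigma)\right\vert^{2n}$ naively leaves the series $\sum_j(\lambda_u^j-\lambda_u^\sigma)\left\Vert e_u^j\right\Vert_2^2$, which diverges for a general $u\in L^2(\Omega)$ (finite only when $u\in H_0^1(\Omega)$). The remedy, valid for every $n\geqslant1$, is to peel off two factors instead of one: since
\[
\left(\frac{\lambda_u^\sigma-\sigma}{\lambda_u^j-\sigma}\right)^{2n}=\left(\frac{\lambda_u^\sigma-\sigma}{\lambda_u^\tau-\sigma}\right)^{2n}\left(\frac{\lambda_u^\tau-\sigma}{\lambda_u^j-\sigma}\right)^{2n}\leqslant\left(\frac{\lambda_u^\sigma-\sigma}{\lambda_u^\tau-\sigma}\right)^{2n}\left(\frac{\lambda_u^\tau-\sigma}{\lambda_u^j-\sigma}\right)^{2},
\]
the residual weight $\left\vert\lambda_u^j-\lambda_u^\sigma\right\vert(\lambda_u^\tau-\sigma)^2/(\lambda_u^j-\sigma)^2$ stays bounded in $j$ (it behaves like $1/\lambda_u^j\to0$ for the infinitely many large indices, and is plainly bounded on the finitely many small ones), so the remaining series is dominated by $C\sum_j\left\Vert e_u^j\right\Vert_2^2\leqslant C\left\Vert u\right\Vert_2^2$ with $C$ independent of $n$. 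Dividing by the denominator gives $\left\vert\mathcal{R}(\phi_n)-\lambda_u^\sigma\right\vert\leqslant C\left\Vert u\right\Vert_2^2\left\Vert e_u^\sigma\right\Vert_2^{-2}\left\vert(\lambda_u^\sigma-\sigma)/(\lambda_u^\tau-\sigma)\right\vert^{2n}$, which is exactly (\ref{rate}), and (\ref{2iva}) follows since the base is below $1$. One could instead read (\ref{rate}) off Corollary \ref{O2} applied at $e_u^\sigma$, once the convergence of the normalized $\phi_n$ is upgraded to the $H_0^1$-norm, but the direct estimate above delivers the explicit constant without that upgrade.
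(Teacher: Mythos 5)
Your proposal is correct, and for parts (i), (ii) and the first two limits of (iii) it essentially reproduces the paper's argument: the same Parseval estimate for $\left\Vert \psi_{n}\right\Vert _{2}$, the same normalized-difference estimate (your use of $e_{u}^{\sigma}\perp\psi_{n}$ even yields the constant $2$ where the paper's triangle-inequality argument with the threshold $\left\Vert \psi_{n}\right\Vert _{2}\leqslant\frac{1}{2}\left\Vert e_{u}^{\sigma}\right\Vert _{2}$ gives $4$), and the same passage to the limit in (\ref{un2}) for (\ref{2toe}). Where you genuinely diverge is in (\ref{2iva}) and (\ref{rate}). The paper first proves the limit (\ref{2iva}) via the integration-by-parts form (\ref{weakRQ}) of the Rayleigh quotient, with a case analysis on the sign of $\lambda_{u}^{\sigma}-\sigma$, and then obtains the rate (\ref{rate}) by citing Corollary \ref{O2} together with (\ref{2toe}). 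You instead compute $\mathcal{R}(e_{u}^{\sigma}+\psi_{n})$ explicitly from the double ($L^{2}$ and Dirichlet) orthogonality and the eigenrelation, and control the numerator by peeling off \emph{two} powers of the spectral ratio so that the unbounded weights $\lambda_{u}^{j}-\lambda_{u}^{\sigma}$ are absorbed by the residual factor $(\lambda_{u}^{\tau}-\sigma)^{2}/(\lambda_{u}^{j}-\sigma)^{2}\sim 1/\lambda_{u}^{j}$. This buys something real: Corollary \ref{O2}, as stated with $v\rightarrow u$ in $L^{2}(\Omega)$, is delicate, since $\mathcal{R}$ is neither continuous nor differentiable in the $L^{2}$ topology (perturbations $u+\epsilon_{k}e_{k}$ with $k\rightarrow\infty$ violate the $O(\left\Vert v-u\right\Vert _{2}^{2})$ bound), so the paper's rate argument implicitly relies on the high-frequency content of $u_{n}-e_{u}^{\sigma}$ being controlled in $H_{0}^{1}$ --- which is precisely what your two-factor peel-off establishes explicitly. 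Your route therefore delivers (\ref{rate}) with an explicit constant $C\left\Vert u\right\Vert _{2}^{2}\left\Vert e_{u}^{\sigma}\right\Vert _{2}^{-2}$, avoids the sign case analysis entirely (scale invariance of $\mathcal{R}$ kills the $\pm$ in (\ref{24})), and is valid for every $u\in L^{2}(\Omega)$ without upgrading convergence to the $H_{0}^{1}$-norm; the only implicit step worth making explicit is that $\phi_{n}\in H_{0}^{1}(\Omega)$ for $n\geqslant1$ and that the spectral series for $\left\Vert \nabla\psi_{n}\right\Vert _{2}^{2}$ converges, both of which follow from the same boundedness of $\lambda_{u}^{j}/(\lambda_{u}^{j}-\sigma)^{2}$ that you already use.
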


\begin{proof}
We have from (\ref{3}) that
\begin{align*}
\left\Vert \psi_{n}\right\Vert _{2}^{2}  &  =\sum_{\left\vert \lambda_{u}%
^{j}-\sigma\right\vert >\left\vert \lambda_{u}^{\sigma}-\sigma\right\vert
}^{M}\left\vert \frac{\lambda_{u}^{\sigma}-\sigma}{\lambda_{u}^{j}-\sigma
}\right\vert ^{2n}\left\Vert e_{u}^{j}\right\Vert _{2}^{2}\\
&  \leqslant\left\vert \frac{\lambda_{u}^{\sigma}-\sigma}{\lambda_{u}^{\tau
}-\sigma}\right\vert ^{2n}\sum_{\left\vert \lambda_{u}^{j}-\sigma\right\vert
>\left\vert \lambda_{u}^{\sigma}-\sigma\right\vert }^{M}\left\Vert e_{u}%
^{j}\right\Vert _{2}^{2}\leqslant\left\vert \frac{\lambda_{u}^{\sigma}-\sigma
}{\lambda_{u}^{\tau}-\sigma}\right\vert ^{2n}\left\Vert u\right\Vert _{2}^{2}.
\end{align*}
Since
\[
\left\vert \frac{\lambda_{u}^{\sigma}-\sigma}{\lambda_{u}^{\tau}-\sigma
}\right\vert <1,
\]
it follows that $\left\Vert \psi_{n}\right\Vert _{2}\rightarrow0$\ as
$n\rightarrow\infty$, which proves \textit{(i)}.

Let $n_{0}\in\mathbb{N}$ be such that
\[
\left\Vert \psi_{n}\right\Vert _{2}\leqslant\frac{1}{2}\left\Vert
e_{u}^{\sigma}\right\Vert \text{ \ \ for all }n\geqslant n_{0}.
\]
Thus, if $n\geqslant n_{0}$ it follows that
\begin{align*}
\frac{1}{2}\left\Vert e_{u}^{\sigma}\right\Vert _{2}  &  =\left\Vert
e_{u}^{\sigma}\right\Vert _{2}-\frac{1}{2}\left\Vert e_{u}^{\sigma}\right\Vert
_{2}\\
&  \leqslant\left\Vert e_{u}^{\sigma}+\psi_{n}\right\Vert _{2}+\left\Vert
\psi_{n}\right\Vert _{2}-\left\Vert \psi_{n}\right\Vert _{2}=\left\Vert
e_{u}^{\sigma}+\psi_{n}\right\Vert _{2}%
\end{align*}
and
\begin{align*}
\left\Vert \frac{e_{u}^{\sigma}+\psi_{n}}{\left\Vert e_{u}^{\sigma}+\psi
_{n}\right\Vert _{2}}-\frac{e_{u}^{\sigma}}{\left\Vert e_{u}^{\sigma
}\right\Vert _{2}}\right\Vert _{2}  &  =\left\Vert \frac{\left\Vert
e_{u}^{\sigma}\right\Vert _{2}\left(  e_{u}^{\sigma}+\psi_{n}\right)
-e_{u}^{\sigma}\left\Vert e_{u}^{\sigma}+\psi_{n}\right\Vert _{2}}{\left\Vert
e_{u}^{\sigma}+\psi_{n}\right\Vert _{2}\left\Vert e_{u}^{\sigma}\right\Vert
_{2}}\right\Vert _{2}\\
&  \leqslant\left\Vert \frac{e_{u}^{\sigma}\left(  \left\Vert e_{u}^{\sigma
}\right\Vert _{2}-\left\Vert e_{u}^{\sigma}+\psi_{n}\right\Vert _{2}\right)
+\left\Vert e_{u}^{\sigma}\right\Vert _{2}\psi_{n}}{\left(  1/2\right)
\left\Vert e_{u}^{\sigma}\right\Vert _{2}^{2}}\right\Vert _{2}\\
&  \leqslant2\frac{\left\Vert e_{u}^{\sigma}\right\Vert _{2}\left\Vert
\psi_{n}\right\Vert _{2}+\left\Vert e_{u}^{\sigma}\right\Vert _{2}\left\Vert
\psi_{n}\right\Vert _{2}}{\left\Vert e_{u}^{\sigma}\right\Vert _{2}^{2}}\\
&  =\frac{4}{\left\Vert e_{u}^{\sigma}\right\Vert _{2}}\left\Vert \psi
_{n}\right\Vert _{2},
\end{align*}
which proves \textit{(ii)}.

Since
\[
\dfrac{\left\Vert \phi_{n}\right\Vert _{2}}{\left\Vert \phi_{n+1}\right\Vert
_{2}}=\left\vert \lambda_{u}^{\sigma}-\sigma\right\vert \frac{\left\Vert
e_{u}^{\sigma}+\psi_{n}\right\Vert _{2}}{\left\Vert e_{u}^{\sigma}+\psi
_{n+1}\right\Vert _{2}},
\]
(\ref{2tolamb}) follows from \textit{(i)}.

The $L^{2}$-convergence (\ref{2toe}) follows from \textit{(\ref{un2}),
(\ref{2iib})} and\textit{ (\ref{eusigma})}.

In order to prove \textit{(\ref{2iva})} we firstly notice that
\[
\lim\left\langle \frac{\phi_{n-1}}{\left\Vert \phi_{n-1}\right\Vert _{2}%
},\frac{\phi_{n}}{\left\Vert \phi_{n}\right\Vert _{2}}\right\rangle
_{2}=\left\langle \dfrac{e_{u}^{\sigma}}{\left\Vert e_{u}^{\sigma}\right\Vert
_{2}},\dfrac{e_{u}^{\sigma}}{\left\Vert e_{u}^{\sigma}\right\Vert _{2}%
}\right\rangle _{2}\left\{
\begin{array}
[c]{r}%
1\\
-1
\end{array}%
\begin{array}
[c]{l}%
\text{if }\lambda_{u}^{\sigma}\geqslant\sigma\\
\text{if \ }\lambda_{u}^{\sigma}<\sigma
\end{array}
\right.
\]
In fact, if $\lambda_{u}^{\sigma}\geqslant\sigma$ then
\begin{align*}
\lim\left\langle \frac{\phi_{n-1}}{\left\Vert \phi_{n-1}\right\Vert _{2}%
},\frac{\phi_{n}}{\left\Vert \phi_{n}\right\Vert _{2}}\right\rangle _{2}  &
=\lim\left\langle \dfrac{e_{u}^{\sigma}+\psi_{n-1}}{\left\Vert e_{u}^{\sigma
}+\psi_{n-1}\right\Vert _{2}},\dfrac{e_{u}^{\sigma}+\psi_{n}}{\left\Vert
e_{u}^{\sigma}+\psi_{n}\right\Vert _{2}}\right\rangle _{2}\\
&  =\left\langle \dfrac{e_{u}^{\sigma}}{\left\Vert e_{u}^{\sigma}\right\Vert
_{2}},\dfrac{e_{u}^{\sigma}}{\left\Vert e_{u}^{\sigma}\right\Vert _{2}%
}\right\rangle _{2},
\end{align*}
while if $\lambda_{u}^{\sigma}<\sigma$ then
\begin{align*}
\lim\left\langle \frac{\phi_{n-1}}{\left\Vert \phi_{n-1}\right\Vert _{2}%
},\frac{\phi_{n}}{\left\Vert \phi_{n}\right\Vert _{2}}\right\rangle _{2}  &
=\lim\left\langle \left(  -1\right)  ^{n-1}\frac{e_{u}^{\sigma}+\psi_{n-1}%
}{\left\Vert e_{u}^{\sigma}+\psi_{n-1}\right\Vert _{2}},\left(  -1\right)
^{n}\frac{e_{u}^{\sigma}+\psi_{n}}{\left\Vert e_{u}^{\sigma}+\psi
_{n}\right\Vert _{2}}\right\rangle _{2}\\
&  =-\left\langle \dfrac{e_{u}^{\sigma}}{\left\Vert e_{u}^{\sigma}\right\Vert
_{2}},\dfrac{e_{u}^{\sigma}}{\left\Vert e_{u}^{\sigma}\right\Vert _{2}%
}\right\rangle _{2}.
\end{align*}
Thus, it follows from (\ref{weakRQ}) that
\[
\lim\mathcal{R}\left(  \phi_{n}\right)  =\left(  \sigma+\left\vert \lambda
_{u}^{\sigma}-\sigma\right\vert \right)  \left\{
\begin{array}
[c]{r}%
1\\
-1
\end{array}%
\begin{array}
[c]{l}%
\text{if }\lambda_{u}^{\sigma}\geqslant\sigma\\
\text{if \ }\lambda_{u}^{\sigma}<\sigma
\end{array}
\right.  =\lambda_{u}^{\sigma}.
\]

The convergence order (\ref{rate}) follows from (\ref{2toe}) and Corollary
\ref{O2}, since
\[
\mathcal{R}\left(  \phi_{n}\right)  -\lambda_{u}^{\sigma}=\mathcal{R}\left(
u_{n}\right)  -\mathcal{R}\left(  e_{u}^{\sigma}\right)  =O\left(  \left\Vert
u_{n}-e_{u}^{\sigma}\right\Vert _{2}^{2}\right)  =O\left(  \left\vert
\frac{\lambda_{u}^{\sigma}-\sigma}{\lambda_{u}^{\tau}-\sigma}\right\vert
^{2n}\right)  .
\]

\end{proof}

\section{Uniform convergence of inverse iteration with shift\label{unif}}

We begin this section by stating a $L^{\infty}$-estimate for an eigenfunction
of the Laplacian in terms of its $L^{2}$-norm. In the following, we denote by
$\left\vert \Omega\right\vert $ the Lebesgue measure of $\Omega.$

\bigskip

\begin{lemma}
Let $e\in H_{0}^{1}\left(  \Omega\right)  $ be an eigenfunction of $-\Delta$
corresponding to the eigenvalue $\lambda.$ Then\textit{ }
\begin{equation}
\left\Vert e\right\Vert _{\infty}\leqslant4^{N}\left\vert \Omega\right\vert
^{1/2}\lambda^{N/2}\left\Vert e\right\Vert _{2}. \label{1}%
\end{equation}

\end{lemma}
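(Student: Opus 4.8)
The plan is to exploit the heat semigroup generated by the Dirichlet Laplacian. Since $-\Delta e = \lambda e$, the function $v(t):=e^{-\lambda t}e$ solves $\partial_t v = \Delta v$ in $\Omega$ with $v=0$ on $\partial\Omega$ and $v(0)=e$; by uniqueness it coincides with $e^{t\Delta}e = \int_\Omega p_\Omega(t,x,y)\,e(y)\,dy$, where $p_\Omega$ denotes the Dirichlet heat kernel of $\Omega$. Reading this identity backwards gives, for every $t>0$,
\[
e(x) = e^{\lambda t}\int_\Omega p_\Omega(t,x,y)\,e(y)\,dy.
\]
First I would establish the pointwise kernel bound $p_\Omega(t,x,y)\leqslant (4\pi t)^{-N/2}$. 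This follows from domain monotonicity: the parabolic maximum principle shows that the Dirichlet kernel of $\Omega$ is dominated by the free Gaussian kernel $(4\pi t)^{-N/2}\exp(-\left\vert x-y\right\vert^2/4t)$ of $\mathbb{R}^N$, which in turn is at most $(4\pi t)^{-N/2}$.

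Next I would insert this bound into the representation of $e$, use the triangle inequality, and estimate the resulting $L^1$-integral by Cauchy--Schwarz. For each fixed $x\in\Omega$ and each $t>0$,
\[
\left\vert e(x)\right\vert \leqslant e^{\lambda t}(4\pi t)^{-N/2}\int_\Omega \left\vert e(y)\right\vert\,dy \leqslant e^{\lambda t}(4\pi t)^{-N/2}\,\left\vert \Omega\right\vert^{1/2}\,\left\Vert e\right\Vert_2,
\]
where I used $\left\Vert e\right\Vert_1\leqslant \left\vert \Omega\right\vert^{1/2}\left\Vert e\right\Vert_2$. Taking the supremum over $x$ and the infimum over $t$ yields
\[
\left\Vert e\right\Vert_\infty \leqslant \left\vert \Omega\right\vert^{1/2}\,\left\Vert e\right\Vert_2\,\inf_{t>0} e^{\lambda t}(4\pi t)^{-N/2}.
\]

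The final step is the elementary minimization of $g(t)=e^{\lambda t}t^{-N/2}$. Its minimum occurs at $t=N/(2\lambda)$, where $g(t)=(2e\lambda/N)^{N/2}$, so the infimum above equals $(e\lambda/(2\pi N))^{N/2}$. This already gives $\left\Vert e\right\Vert_\infty \leqslant (e/(2\pi N))^{N/2}\,\left\vert \Omega\right\vert^{1/2}\,\lambda^{N/2}\,\left\Vert e\right\Vert_2$, and since $e<2\pi$ forces $(e/(2\pi N))^{N/2}<1\leqslant 4^N$ for every $N\geqslant 1$, the stated estimate (\ref{1}) follows with an enormous margin; the clean constant $4^N$ is simply a convenient crude upper bound for the sharp one.

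The only genuinely delicate point is the kernel bound $p_\Omega\leqslant(4\pi t)^{-N/2}$, which rests on comparing the Dirichlet heat kernel with the free one, i.e. on the parabolic maximum principle applied to $p_{\mathbb{R}^N}-p_\Omega$ on $\Omega$. One also needs $e\in L^\infty(\Omega)$ so that the semigroup identity may be read pointwise; this is supplied by interior elliptic regularity (eigenfunctions are smooth in $\Omega$), or alternatively the whole argument can be run at the level of the $L^2$-semigroup and the boundedness deduced a posteriori from the same estimate. Everything after the kernel bound is a one-variable optimization together with the numerical comparison $(e/(2\pi N))^{N/2}\leqslant 4^N$.
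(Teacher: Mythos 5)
Your proof is correct, but it follows a genuinely different route from the paper. The paper's proof is a two-line citation: it invokes Lindqvist's $L^{\infty}$-bound for eigenfunctions of the $p$-Laplacian, $\left\Vert e\right\Vert _{\infty}\leqslant4^{N}\lambda^{N/p}\left\Vert e\right\Vert _{L^{1}(\Omega)}$ (proved by Moser-type iteration, valid with no smoothness assumption on $\partial\Omega$), specializes to $p=2$, and applies H\"{o}lder's inequality to pass from $\left\Vert e\right\Vert _{1}$ to $\left\vert \Omega\right\vert ^{1/2}\left\Vert e\right\Vert _{2}$ --- which is exactly where the factors $4^{N}$ and $\left\vert \Omega\right\vert ^{1/2}$ in (\ref{1}) come from. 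You instead argue through the Dirichlet heat semigroup: the identity $e(x)=e^{\lambda t}\int_{\Omega}p_{\Omega}(t,x,y)e(y)\,dy$, the Gaussian domination $p_{\Omega}\leqslant(4\pi t)^{-N/2}$, Cauchy--Schwarz, and optimization in $t$, yielding the sharper constant $\left(  e/(2\pi N)\right)  ^{N/2}<1\leqslant4^{N}$; your computation of the minimizer $t=N/(2\lambda)$ and the resulting infimum is correct, as is the pointwise reading of the semigroup identity via interior elliptic regularity. What your approach buys is self-containedness (modulo one standard fact), a strictly better constant, and an easy upgrade: replacing the $L^{1}\rightarrow L^{\infty}$ step by the ultracontractive bound $\left\Vert e^{t\Delta}\right\Vert _{L^{2}\rightarrow L^{\infty}}\leqslant(8\pi t)^{-N/4}$ gives $\left\Vert e\right\Vert _{\infty}\leqslant\left(  e\lambda/(2\pi N)\right)  ^{N/4}\left\Vert e\right\Vert _{2}$ with no $\left\vert \Omega\right\vert ^{1/2}$ factor, precisely the $N/4$-exponent improvement the paper mentions after the lemma with references to Egorov and Yakubov. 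What the paper's citation buys is immunity to boundary regularity issues with zero effort: your one delicate point, $p_{\Omega}\leqslant p_{\mathbb{R}^{N}}$, cannot be settled by a bare classical parabolic maximum principle on an \emph{arbitrary} bounded domain (the paper assumes none), so you should justify it either by exhausting $\Omega$ by smooth subdomains, by domination of semigroups in the Dirichlet-form sense, or by the probabilistic representation --- all standard, so this is a presentational caveat rather than a gap.
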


\begin{proof}
It is shown in \cite{Lindqvist}, without any smoothness assumption on
$\partial\Omega$, that if $e$ is an eigenfunction corresponding to a
variational eigenvalue $\lambda$ of the homogeneous Dirichlet problem for the
$p$-Laplacian then
\[
\left\Vert e\right\Vert _{\infty}\leqslant4^{N}\lambda^{N/p}\left\Vert
e\right\Vert _{L^{1}\left(  \Omega\right)  }.
\]
Choosing $p=2$, (\ref{1}) follows from H\"{o}lder's inequality.
\end{proof}

Estimates for eigenfunctions of the Laplacian with the exponent $N/2$ in the
eigenvalue replaced by $N/4$ can be found in \cite{Egorov, Yakubov1, Yakubov2,
Yakubov3}. See also \cite[Remark 5.21]{Burenkov} for more references.

The following result refers to the nondecreasing sequence (\ref{eigenlap}) of
eigenvalues of the Laplacian.

\bigskip

\begin{lemma}
If $k>N/2$, then
\begin{equation}
{\sum\limits_{j=1}^{\infty}}\frac{1}{\lambda_{j}^{k}}\leqslant\frac{NC^{k}%
}{2k-N}<\infty, \label{weyl}%
\end{equation}
where $C$ is a positive constant which depends only on $N$ and $\left\vert
\Omega\right\vert .$
\end{lemma}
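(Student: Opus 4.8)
The plan is to reduce the estimate to a bound on the eigenvalue counting function $N(\lambda):=\#\{j:\lambda_{j}\leqslant\lambda\}$ and then to carry out an elementary summation. First I would invoke the Weyl-type estimate that is available precisely because $\Omega$ is bounded: there is a constant $c_{N}$ depending only on the dimension $N$ such that
\[
N(\lambda)\leqslant c_{N}\,|\Omega|\,\lambda^{N/2}\qquad\text{for all }\lambda>0 .
\]
The essential point is that the constant involves only $N$ and $|\Omega|$ and no finer geometric data; the averaged Weyl law cited in the Introduction (equivalently, the non-asymptotic Berezin--Li--Yau bound) supplies it. Writing $B:=c_{N}|\Omega|$, the trivial inequality $N(\lambda_{j})\geqslant j$ combined with $N(\lambda_{j})\leqslant B\lambda_{j}^{N/2}$ gives the pointwise lower bound
\[
\lambda_{j}\geqslant\left(\tfrac{j}{B}\right)^{2/N},\qquad j=1,2,\dots
\]

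With this lower bound the series is dominated term by term,
\[
\sum_{j=1}^{\infty}\frac{1}{\lambda_{j}^{k}}\leqslant B^{2k/N}\sum_{j=1}^{\infty}j^{-2k/N},
\]
and since $k>N/2$ the exponent $2k/N$ exceeds $1$, so the comparison series converges. To reach the announced closed form I would compare the summand with an integral: as $x\mapsto x^{-2k/N}$ is decreasing, the tail satisfies $\sum_{j\geqslant 2}j^{-2k/N}\leqslant\int_{1}^{\infty}x^{-2k/N}\,dx=\frac{N}{2k-N}$, while the single term $j=1$ contributes a bounded amount. Absorbing the numerical constants together with $B^{2/N}=(c_{N}|\Omega|)^{2/N}$ into one constant $C$ depending only on $N$ and $|\Omega|$ then produces a bound of the stated shape $\frac{NC^{k}}{2k-N}$; the factor $(2k-N)^{-1}$ records precisely the divergence of the integral as $k\downarrow N/2$, which is the threshold forced by the hypothesis $k>N/2$.

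The main obstacle is the first step, namely securing the counting estimate with a constant free of any geometric information beyond the volume. The naive route through the preceding $L^{\infty}$-bound for a single eigenfunction is too lossy: squaring and summing $e_{j}(x)^{2}$ over $\lambda_{j}\leqslant\lambda$ reintroduces a factor $N(\lambda)$ on the right-hand side and is therefore circular. What is genuinely required is a uniform control of the diagonal of the spectral projector $\sum_{\lambda_{j}\leqslant\lambda}e_{j}(x)^{2}$ (equivalently, the $N/4$-type estimate alluded to in the Remark following the previous Lemma), or the Gaussian on-diagonal bound $p_{\Omega}(t,x,x)\leqslant(4\pi t)^{-N/2}$ for the Dirichlet heat kernel, which integrates to the heat-trace bound $\sum_{j}e^{-\lambda_{j}t}\leqslant|\Omega|(4\pi t)^{-N/2}$ and delivers the counting estimate with exactly the right dependence. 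Once that Weyl-type ingredient is accepted as standard, everything else is the routine summation described above.
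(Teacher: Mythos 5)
Your proposal is correct and rests on the same core ingredient as the paper's proof: the paper quotes the Li--Yau lower bound $\lambda_{j}\geqslant C^{-1}j^{2/N}$ with $C=\frac{N+2}{N}\frac{\left(\omega_{N}\left\vert\Omega\right\vert\right)^{2/N}}{4\pi^{2}}$ (citing Li--Yau and Lieb), which is precisely what you recover by inverting the Berezin--Li--Yau counting bound $N(\lambda)\leqslant c_{N}\left\vert\Omega\right\vert\lambda^{N/2}$; your detour through the heat-trace bound is an equivalent packaging of the same input, and your worry about circularity of the $L^{\infty}$-eigenfunction route is well placed but moot, since the paper does not argue that way. The one substantive difference is in the summation step, and there your version is the correct one: the paper asserts $\sum_{j=1}^{\infty}j^{-2k/N}<\int_{1}^{\infty}s^{-2k/N}\,ds=\frac{N}{2k-N}$, which is backwards, since for a positive decreasing function the sum dominates the tail integral ($\zeta(\alpha)>\frac{1}{\alpha-1}$ for all $\alpha>1$; concretely, for $k>N$ the sum exceeds $1$ while $\frac{N}{2k-N}<1$). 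Splitting off the $j=1$ term as you do gives the valid bound $\sum_{j=1}^{\infty}j^{-2k/N}\leqslant 1+\frac{N}{2k-N}=\frac{2k}{2k-N}$, so the lemma holds after enlarging the constant, which is all that (\ref{weyl}) claims since $C$ is only required to depend on $N$ and $\left\vert\Omega\right\vert$. One step you gloss over deserves a line: ``absorbing the numerical constants'' must be done uniformly in $k$, because $C$ may not depend on $k$; this works because replacing $C$ by $e^{2/N}C$ yields $\left(e^{2/N}C\right)^{k}=e^{2k/N}C^{k}\geqslant\frac{2k}{N}C^{k}$ (using $e^{x}\geqslant ex$), which converts $C^{k}\frac{2k}{2k-N}$ into the stated shape $\frac{N\tilde{C}^{k}}{2k-N}$. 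With that observation made explicit, your argument proves the lemma and in fact repairs the slip in the paper's own proof; the explicit constant displayed in (\ref{cn}) is only valid after such an enlargement.
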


\begin{proof}
It is well known (see \cite{Li Yau}, \cite{Lieb}) that
\[
\lambda_{j}\geqslant\frac{1}{C}j^{2/N},
\]
where
\begin{equation}
C=\frac{N+2}{N}\frac{\left(  \omega_{N}\left\vert \Omega\right\vert \right)
^{2/N}}{4\pi^{2}} \label{cn}%
\end{equation}
and $\omega_{N}$ is the volume of the $N$-dimensional unit ball. Hence, if
$j>N/2$ we have
\[
{\sum\limits_{j=1}^{\infty}}\lambda_{j}^{-k}\leqslant C^{k}{\sum
\limits_{j=1}^{\infty}}j^{-2k/N}<C^{k}\int_{1}^{\infty}s^{^{-2k/N}}%
ds=\frac{NC^{k}}{2k-N}.
\]

\end{proof}

In the next lemma we show that the convergence of a series formed by the
eigenvalues $\lambda_{u}^{j}$ which appear in the spectral decomposition of a
function $u\in L^{2}\left(  \Omega\right)  $ follows from the convergence of a
series formed by all eigenvalues of the Laplacian.

\bigskip

\begin{lemma}
Let $k$ be chosen so that the series $\sum\limits_{j=1}^{\infty}\frac
{1}{\lambda_{j}^{k}}$ is convergent$.$ Then the series
\[
\sum_{j=1}^{M}\frac{\left(  \lambda_{u}^{j}\right)  ^{N/2}}{\left\vert
\lambda_{u}^{j}-\sigma\right\vert ^{N/2+k+1}}%
\]
is also convergent.
\end{lemma}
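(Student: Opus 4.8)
The plan is to prove convergence by a comparison test against the series $\sum_{j=1}^{\infty}\lambda_{j}^{-k}$, which is convergent by hypothesis. The key structural fact to exploit is that the eigenvalues $\lambda_{u}^{j}$ appearing in the spectral decomposition of $u$ form a strictly increasing \emph{subsequence} of the full spectrum, so they diverge to $+\infty$; the only point requiring a little care is the presence of the shift $\sigma$ in the denominator, which is harmless for large $j$ but must be handled separately for the finitely many initial terms.

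First I would record that, since $0<\lambda_{u}^{1}<\lambda_{u}^{2}<\cdots$ is a subsequence of the non-decreasing sequence (\ref{eigenlap}), we have $\lambda_{u}^{j}\rightarrow\infty$. Hence there is an index $j_{0}$ such that $\lambda_{u}^{j}\geqslant 2\sigma$ for every $j\geqslant j_{0}$, which yields $\left\vert\lambda_{u}^{j}-\sigma\right\vert=\lambda_{u}^{j}-\sigma\geqslant\tfrac{1}{2}\lambda_{u}^{j}$. Using this together with $\lambda_{u}^{j}\geqslant\lambda_{1}$, I would bound the general term, for $j\geqslant j_{0}$, by
\[
\frac{\left(\lambda_{u}^{j}\right)^{N/2}}{\left\vert\lambda_{u}^{j}-\sigma\right\vert^{N/2+k+1}}\leqslant\frac{\left(\lambda_{u}^{j}\right)^{N/2}}{\left(\lambda_{u}^{j}/2\right)^{N/2+k+1}}=2^{N/2+k+1}\frac{1}{\left(\lambda_{u}^{j}\right)^{k+1}}\leqslant\frac{2^{N/2+k+1}}{\lambda_{1}}\frac{1}{\left(\lambda_{u}^{j}\right)^{k}}.
\]
The decisive cancellation is that the numerator power $N/2$ is swallowed by the denominator, leaving a net exponent $k+1>k$, so that the term is controlled by $\lambda_{1}^{-1}(\lambda_{u}^{j})^{-k}$ up to a constant.

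Next I would control $\sum_{j}(\lambda_{u}^{j})^{-k}$. Because each $\lambda_{u}^{j}$ equals $\lambda_{k_{j}}$ for a strictly increasing sequence of indices $k_{j}$ (as set up in Section \ref{Sec2}), the values $\{\lambda_{u}^{j}\}$ form a sub-multiset of $\{\lambda_{j}\}_{j=1}^{\infty}$, each value occurring once; hence $\sum_{j}(\lambda_{u}^{j})^{-k}\leqslant\sum_{j=1}^{\infty}\lambda_{j}^{-k}<\infty$. Combining this with the estimate above shows that the tail $\sum_{j\geqslant j_{0}}$ of the series in question converges. The finitely many remaining terms with $1\leqslant j<j_{0}$ are each finite, since $\sigma$ is not an eigenvalue and so every denominator $\left\vert\lambda_{u}^{j}-\sigma\right\vert$ is strictly positive; adding a finite quantity does not affect convergence, and the lemma follows.

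As for the difficulty, there is no genuine obstacle: the argument is a routine comparison. The only steps that demand attention are the two bookkeeping observations — that $\left\vert\lambda_{u}^{j}-\sigma\right\vert$ is comparable to $\lambda_{u}^{j}$ for large $j$ (giving the gain of one power in the denominator), and that restricting from the full spectrum to the subsequence $\{\lambda_{u}^{j}\}$, together with dropping multiplicities, can only decrease the sum. With these in hand the comparison with the hypothesized convergent series is immediate.
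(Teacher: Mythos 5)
Your proof is correct and follows essentially the same route as the paper's: a comparison against the hypothesized convergent series $\sum_{j}\lambda_{j}^{-k}$, using that $\left\vert\lambda_{u}^{j}-\sigma\right\vert$ is comparable to $\lambda_{u}^{j}$ once $\lambda_{u}^{j}\rightarrow\infty$ makes the shift negligible, together with the observation that passing to the subsequence $\{\lambda_{u}^{j}\}$ only decreases the sum. The paper absorbs the extra power into the eventual inequality $(\lambda_{u}^{j})^{N/2+k}/(\lambda_{u}^{j}-\sigma)^{N/2+k}<\lambda_{u}^{j}-\sigma$ where you instead track the explicit constant $2^{N/2+k+1}/\lambda_{1}$, but this is a cosmetic difference, not a different argument.
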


\begin{proof}
Assume that the expansion of $u$ is not finite, i.e., $M=\infty$ (otherwise
the result is trivial). Since
\[
\sum_{j=1}^{\infty}\frac{1}{\left(  \lambda_{u}^{j}\right)  ^{k}}\leqslant
\sum_{j=1}^{\infty}\frac{1}{\lambda_{j}^{k}},
\]
it suffices to show that
\begin{equation}
\frac{\left(  \lambda_{u}^{j}\right)  ^{N/2}}{\left\vert \lambda_{u}%
^{j}-\sigma\right\vert ^{N/2+k+1}}\leqslant\frac{1}{\left(  \lambda_{u}%
^{j}\right)  ^{k}} \label{2}%
\end{equation}
for all sufficiently large $j$.
%Define $f:\left(  \sigma,+\infty\right)
%\longrightarrow
%\mathbb{R}
%$ by
%\[
%f(t)=\left(  \frac{t}{t-\sigma}\right)  ^{N/2+k}=\left(  1+\frac{\sigma
%}{t-\sigma}\right)  ^{N/2+k}.
%\]
%Since $f$ is decreasing, there exists $t_{0}$ such that
%\[
%f(t)<t-\sigma
%\]
%for all $t>t_{0}$.

As $\lambda_{u}^{j}\rightarrow\infty$, there exists $j_{0}$ such that
$\left\vert \lambda_{u}^{j}-\sigma\right\vert =\lambda_{u}^{j}-\sigma$ for all
$j\geqslant j_{0}$. Thus, if $j$ is sufficiently large, we can write
\[
\frac{\left(  \lambda_{u}^{j}\right)  ^{N/2+k}}{\left\vert \lambda_{u}%
^{j}-\sigma\right\vert ^{N/2+k}}=\frac{\left(  \lambda_{u}^{j}\right)
^{N/2+k}}{\left(  \lambda_{u}^{j}-\sigma\right)  ^{N/2+k}}<\lambda_{u}%
^{j}-\sigma=\left\vert \lambda_{u}^{j}-\sigma\right\vert ,
\]
whence (\ref{2}) follows.
\end{proof}

In order to prove the uniform convergence of the inverse iteration sequence
$\left\{  \phi_{n}\right\}  _{n\in\mathbb{N}}$, we return to (\ref{0}) and
write
\begin{equation}
\dfrac{\phi_{n}}{\left\Vert \phi_{n}\right\Vert _{\infty}}=\pm\dfrac
{e_{u}^{\sigma}+\psi_{n}}{\left\Vert e_{u}^{\sigma}+\psi_{n}\right\Vert
_{\infty}}. \label{4}%
\end{equation}
As in (\ref{24}), the sign of the right-hand side will depend on whether the
shift is taken above or below the eigenvalue and on $n.$

\bigskip

\begin{lemma}
\label{psitozero}The inequality
\begin{equation}
\left\Vert \psi_{n}\right\Vert _{\infty}\leqslant K\left\vert \frac
{\lambda_{u}^{\sigma}-\sigma}{\lambda_{u}^{\tau}-\sigma}\right\vert
^{n-\theta} \label{5}%
\end{equation}
holds for all sufficiently large $n$, for some $\theta>0$ and a positive
constant $K=K\left(  u,\Omega,\left\vert \lambda_{u}^{\sigma}-\sigma
\right\vert \right)  $. In particular, $\psi_{n}\rightarrow0$ uniformly in
$\overline{\Omega}$ with an exponential rate.
\end{lemma}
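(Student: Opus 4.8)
The plan is to start from the explicit expansion (\ref{3}) of $\psi_n$ and estimate its $L^{\infty}$-norm term by term. Applying the triangle inequality to (\ref{3}) gives
\[
\|\psi_n\|_\infty \le \sum_{|\lambda_u^j-\sigma|>|\lambda_u^\sigma-\sigma|}^{M}\left|\frac{\lambda_u^\sigma-\sigma}{\lambda_u^j-\sigma}\right|^{n}\|e_u^j\|_\infty ,
\]
and the first Lemma of this section, applied to each $e_u^j$ (an eigenfunction in the $\lambda_u^j$-eigenspace), bounds $\|e_u^j\|_\infty\le 4^{N}|\Omega|^{1/2}(\lambda_u^j)^{N/2}\|e_u^j\|_2$. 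Since the $e_u^j$ are mutually orthogonal projections of $u$, Bessel's inequality yields $\|e_u^j\|_2\le\|u\|_2$ for every $j$, so this factor can be absorbed into a constant.

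The key algebraic step is to extract the dominant geometric factor. By the definition (\ref{utau}) of $\lambda_u^\tau$, every index $j$ in the sum satisfies $|\lambda_u^j-\sigma|\ge|\lambda_u^\tau-\sigma|$, hence each ratio obeys $\left|(\lambda_u^\sigma-\sigma)/(\lambda_u^j-\sigma)\right|\le\left|(\lambda_u^\sigma-\sigma)/(\lambda_u^\tau-\sigma)\right|<1$. I would then split the exponent as $n=(n-\theta)+\theta$ and, for $n\ge\theta$, apply this bound to the $(n-\theta)$-th power alone to write
\[
\left|\frac{\lambda_u^\sigma-\sigma}{\lambda_u^j-\sigma}\right|^{n}\le\left|\frac{\lambda_u^\sigma-\sigma}{\lambda_u^\tau-\sigma}\right|^{n-\theta}\left|\frac{\lambda_u^\sigma-\sigma}{\lambda_u^j-\sigma}\right|^{\theta}.
\]
This pulls the desired factor $\left|(\lambda_u^\sigma-\sigma)/(\lambda_u^\tau-\sigma)\right|^{n-\theta}$ out of the sum, leaving the $n$-independent tail $|\lambda_u^\sigma-\sigma|^{\theta}\sum_j (\lambda_u^j)^{N/2}/|\lambda_u^j-\sigma|^{\theta}$.

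It then remains only to verify that this tail is finite, which is exactly where the two preceding Lemmas enter: I choose $\theta=N/2+k+1$ with $k>N/2$, so that $\sum_j\lambda_j^{-k}<\infty$ by the Weyl-type Lemma (\ref{weyl}), whereupon the last Lemma guarantees convergence of $\sum_j (\lambda_u^j)^{N/2}/|\lambda_u^j-\sigma|^{N/2+k+1}$. Collecting $4^N$, $|\Omega|^{1/2}$, $\|u\|_2$, $|\lambda_u^\sigma-\sigma|^{\theta}$ and the value of this convergent series into a single $K=K(u,\Omega,|\lambda_u^\sigma-\sigma|)$ yields (\ref{5}); since the geometric ratio is strictly less than $1$, the exponential decay and the uniform convergence $\psi_n\to 0$ follow immediately. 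The only delicate point is the bookkeeping in the exponent split, which both forces the restriction to large $n$ (namely $n\ge\theta$) and fixes the value of $\theta$: it must be taken large enough for the tail series to converge through the last Lemma, and it is precisely this choice that makes the constant $K$ depend on $|\lambda_u^\sigma-\sigma|$.
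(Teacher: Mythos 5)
Your proposal is correct and follows essentially the same route as the paper's proof: the triangle inequality applied to the expansion (\ref{3}), the $L^{\infty}$-bound $\left\Vert e_{u}^{j}\right\Vert _{\infty}\leqslant 4^{N}\left\vert \Omega\right\vert ^{1/2}\left(\lambda_{u}^{j}\right)^{N/2}\left\Vert u\right\Vert _{2}$ from the first lemma together with Bessel's inequality, the exponent split $n=(n-\theta)+\theta$ with $\theta=N/2+k+1$ to extract the factor $\left\vert (\lambda_{u}^{\sigma}-\sigma)/(\lambda_{u}^{\tau}-\sigma)\right\vert ^{n-\theta}$, and the convergence of the tail series via the two preceding lemmas. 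Even your constant $K$ coincides with the one the paper assembles, so there is nothing to add.
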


\begin{proof}
From (\ref{3}) and Lemma 1 we obtain
\[
\left\vert \psi_{n}\right\vert \leqslant\sum_{\left\vert \lambda_{u}%
^{j}-\sigma\right\vert >\left\vert \lambda_{u}^{\sigma}-\sigma\right\vert
}^{M}\left\vert \frac{\lambda_{u}^{\sigma}-\sigma}{\lambda_{u}^{j}-\sigma
}\right\vert ^{n}\left\vert e_{u}^{j}\right\vert \leqslant4^{N}\left\vert
\Omega\right\vert ^{1/2}\left\Vert u\right\Vert _{2}\sum_{\left\vert
\lambda_{u}^{j}-\sigma\right\vert >\left\vert \lambda_{u}^{\sigma}%
-\sigma\right\vert }^{M}\left\vert \frac{\lambda_{u}^{\sigma}-\sigma}%
{\lambda_{u}^{j}-\sigma}\right\vert ^{n}\left(  \lambda_{u}^{j}\right)
^{N/2}.
\]
But, taking $\theta=N/2+k+1$, we have
\begin{align*}
\sum_{\left\vert \lambda_{u}^{j}-\sigma\right\vert >\left\vert \lambda
_{u}^{\sigma}-\sigma\right\vert }^{M}\left\vert \frac{\lambda_{u}^{\sigma
}-\sigma}{\lambda_{u}^{j}-\sigma}\right\vert ^{n}\left(  \lambda_{u}%
^{j}\right)  ^{N/2}  &  =\left\vert \lambda_{u}^{\sigma}-\sigma\right\vert
^{\theta}\sum_{\left\vert \lambda_{u}^{j}-\sigma\right\vert >\left\vert
\lambda_{u}^{\sigma}-\sigma\right\vert }^{M}\left\vert \frac{\lambda
_{u}^{\sigma}-\sigma}{\lambda_{u}^{j}-\sigma}\right\vert ^{n-\theta}%
\frac{\left(  \lambda_{u}^{j}\right)  ^{N/2}}{\left\vert \lambda_{u}%
^{j}-\sigma\right\vert ^{\theta}}\\
&  \leqslant\left\vert \lambda_{u}^{\sigma}-\sigma\right\vert ^{\theta
}\left\vert \frac{\lambda_{u}^{\sigma}-\sigma}{\lambda_{u}^{\tau}-\sigma
}\right\vert ^{n-\theta}\sum_{\left\vert \lambda_{u}^{j}-\sigma\right\vert
>\left\vert \lambda_{u}^{\sigma}-\sigma\right\vert }^{M}\frac{\left(
\lambda_{u}^{j}\right)  ^{N/2}}{\left\vert \lambda_{u}^{j}-\sigma\right\vert
^{\theta}}\\
&  \leqslant\left\vert \frac{\lambda_{u}^{\sigma}-\sigma}{\lambda_{u}^{\tau
}-\sigma}\right\vert ^{n-\theta}\left\vert \lambda_{u}^{\sigma}-\sigma
\right\vert ^{\theta}\sum_{j=1}^{M}\frac{\left(  \lambda_{u}^{j}\right)
^{N/2}}{\left\vert \lambda_{u}^{j}-\sigma\right\vert ^{\theta}},
\end{align*}
and by Lemma 3 the last series converges. Thus, (\ref{5}) follows if we take%
\[
K=4^{N}\left\vert \Omega\right\vert ^{1/2}\left\Vert u\right\Vert
_{2}\left\vert \lambda_{u}^{\sigma}-\sigma\right\vert ^{\theta}\sum_{j=1}%
^{M}\frac{\left(  \lambda_{u}^{j}\right)  ^{N/2}}{\left\vert \lambda_{u}%
^{j}-\sigma\right\vert ^{\theta}}.
\]

\end{proof}

\bigskip

\begin{theorem}
\textit{Let }$u\in L^{2}\left(  \Omega\right)  .$\textit{ Then}

\begin{enumerate}
\item[(i)] \textit{There exists }$n_{0}\in\mathbb{N}$\textit{ such that}
\[
\left\Vert \frac{e_{u}^{\sigma}+\psi_{n}}{\left\Vert e_{u}^{\sigma}+\psi
_{n}\right\Vert _{\infty}}-\frac{e_{u}^{\sigma}}{\left\Vert e_{u}^{\sigma
}\right\Vert _{\infty}}\right\Vert _{\infty}\leqslant\frac{4}{\left\Vert
e_{u}^{\sigma}\right\Vert _{\infty}}\left\Vert \psi_{n}\right\Vert _{\infty}%
\]
\textit{for all }$n\geqslant n_{0}$\textit{. In particular,}
\[
\dfrac{e_{u}^{\sigma}+\psi_{n}}{\left\Vert e_{u}^{\sigma}+\psi_{n}\right\Vert
_{\infty}}\rightarrow\dfrac{e_{u}^{\sigma}}{\left\Vert e_{u}^{\sigma
}\right\Vert _{\infty}}\text{ \ \ \textit{uniformly in} }\Omega
\]
\textit{with an exponential rate.}

\item[(ii)] The following convergences \textit{hold}%
\begin{equation}
\dfrac{\left\Vert \phi_{n}\right\Vert _{\infty}}{\left\Vert \phi
_{n+1}\right\Vert _{\infty}}\rightarrow\left\vert \lambda_{u}^{\sigma}%
-\sigma\right\vert \label{inftolamb}%
\end{equation}
and%
\begin{equation}
u_{n}\rightarrow e_{u}^{\sigma}\text{ \ \ \textit{uniformly in} }\Omega.
\label{infuneu}%
\end{equation}

\item[(iii)] If $\mathcal{K}\subset\left\{  x:e_{u}^{\sigma}(x)\neq
0\mathit{\ }\right\}  $ is compact, then $\dfrac{\phi_{n}}{\phi_{n+1}%
}\rightarrow\lambda_{u}^{\sigma}-\sigma$\textit{ uniformly and} \textit{with
an exponential rate.}
\end{enumerate}
\end{theorem}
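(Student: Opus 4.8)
The plan is to treat (i) and (ii) as the uniform-norm counterparts of Theorem \ref{2main}(ii)--(iii), simply substituting $\|\cdot\|_\infty$ for $\|\cdot\|_2$ and replacing the $L^2$-decay of $\psi_n$ by the uniform decay furnished by Lemma \ref{psitozero}; part (iii) is the genuinely new ingredient. For (i), I would first invoke Lemma \ref{psitozero} to fix $n_0$ with $\|\psi_n\|_\infty \leq \tfrac12\|e_u^\sigma\|_\infty$ for $n \geq n_0$, so that the reverse triangle inequality gives $\|e_u^\sigma+\psi_n\|_\infty \geq \tfrac12\|e_u^\sigma\|_\infty$. Then I would reproduce the algebra from the proof of Theorem \ref{2main}(ii): write the difference over the common denominator $\|e_u^\sigma+\psi_n\|_\infty\,\|e_u^\sigma\|_\infty$, split the numerator as $e_u^\sigma(\|e_u^\sigma\|_\infty - \|e_u^\sigma+\psi_n\|_\infty) + \|e_u^\sigma\|_\infty\,\psi_n$, and bound each piece using $|\,\|e_u^\sigma\|_\infty - \|e_u^\sigma+\psi_n\|_\infty\,| \leq \|\psi_n\|_\infty$. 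This produces the stated $4/\|e_u^\sigma\|_\infty$ constant, and the exponential uniform convergence is then immediate from (\ref{5}).

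For (ii), taking $\|\cdot\|_\infty$ in (\ref{0}) gives $\|\phi_n\|_\infty = |\lambda_u^\sigma-\sigma|^{-n}\|e_u^\sigma+\psi_n\|_\infty$, so
\[
\frac{\|\phi_n\|_\infty}{\|\phi_{n+1}\|_\infty} = |\lambda_u^\sigma-\sigma|\,\frac{\|e_u^\sigma+\psi_n\|_\infty}{\|e_u^\sigma+\psi_{n+1}\|_\infty} \longrightarrow |\lambda_u^\sigma-\sigma|,
\]
since $\|e_u^\sigma+\psi_n\|_\infty \to \|e_u^\sigma\|_\infty \neq 0$ by (\ref{5}). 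For the uniform convergence $u_n \to e_u^\sigma$ I would use (\ref{un2}): writing $v_n := (e_u^\sigma+\psi_n)/\|e_u^\sigma+\psi_n\|_2$ and $a_n := \langle u,v_n\rangle_2$, one has $u_n = a_n v_n$. A mixed-norm estimate of exactly the shape of (i) (numerator measured in $\|\cdot\|_\infty$, normalization in $\|\cdot\|_2$, and $|\,\|e_u^\sigma\|_2 - \|e_u^\sigma+\psi_n\|_2\,| \le \|\psi_n\|_2 \le |\Omega|^{1/2}\|\psi_n\|_\infty$) shows $v_n \to e_u^\sigma/\|e_u^\sigma\|_2$ uniformly, while $a_n \to \|e_u^\sigma\|_2$ by (\ref{eusigma}) together with the $L^2$-convergence (\ref{2iib}). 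Splitting $u_n - e_u^\sigma = a_n\bigl(v_n - e_u^\sigma/\|e_u^\sigma\|_2\bigr) + (a_n - \|e_u^\sigma\|_2)\,e_u^\sigma/\|e_u^\sigma\|_2$ and bounding in $\|\cdot\|_\infty$ then gives the claim.

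Part (iii) is where the real work lies. Using (\ref{0}) the powers of $\lambda_u^\sigma-\sigma$ telescope:
\[
\frac{\phi_n}{\phi_{n+1}} = (\lambda_u^\sigma-\sigma)\,\frac{e_u^\sigma+\psi_n}{e_u^\sigma+\psi_{n+1}},
\]
so it suffices to show $(e_u^\sigma+\psi_n)/(e_u^\sigma+\psi_{n+1}) \to 1$ uniformly on $\mathcal{K}$. As $e_u^\sigma$ is a finite combination of Dirichlet eigenfunctions, elliptic regularity makes it continuous in $\Omega$; since $\mathcal{K}$ is a compact subset of $\{e_u^\sigma \neq 0\}$, there is $\delta>0$ with $|e_u^\sigma| \geq \delta$ on $\mathcal{K}$. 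By Lemma \ref{psitozero} choose $n$ so large that $\|\psi_{n+1}\|_\infty \leq \delta/2$, whence $|e_u^\sigma+\psi_{n+1}| \geq \delta/2$ on $\mathcal{K}$; then on $\mathcal{K}$
\[
\left| \frac{e_u^\sigma+\psi_n}{e_u^\sigma+\psi_{n+1}} - 1 \right| = \frac{|\psi_n-\psi_{n+1}|}{|e_u^\sigma+\psi_{n+1}|} \leq \frac{\|\psi_n\|_\infty + \|\psi_{n+1}\|_\infty}{\delta/2},
\]
which tends to $0$ at the exponential rate supplied by (\ref{5}). Multiplying by $\lambda_u^\sigma-\sigma$ gives the assertion.

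I expect the main obstacle to be precisely this denominator control in (iii). Uniform convergence of the quotient can only be expected away from the nodal set, and the whole issue is to convert the uniform smallness of $\psi_{n+1}$ (Lemma \ref{psitozero}) into a uniform positive lower bound for $|e_u^\sigma+\psi_{n+1}|$ on $\mathcal{K}$; this hinges on the continuity of $e_u^\sigma$ together with the compactness of $\mathcal{K}$, which jointly deliver the constant $\delta$. Parts (i) and (ii), by contrast, are routine transcriptions of the $L^2$ arguments once $\|\psi_n\|_\infty \to 0$ is in hand.
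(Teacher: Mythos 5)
Your proposal is correct and follows essentially the same route as the paper: Lemma \ref{psitozero} plus the $n_{0}$-threshold algebra borrowed from Theorem \ref{2main}(ii) for part (i), taking sup norms in (\ref{0}) for (\ref{inftolamb}), and for (iii) the lower bound $m:=\min_{\mathcal{K}}\left\vert e_{u}^{\sigma}\right\vert>0$ combined with the identity $\frac{\phi_{n}}{\phi_{n+1}}-\left(\lambda_{u}^{\sigma}-\sigma\right)=\left(\lambda_{u}^{\sigma}-\sigma\right)\frac{\psi_{n}-\psi_{n+1}}{e_{u}^{\sigma}+\psi_{n+1}}$, exactly as in the paper. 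The only cosmetic difference is in (\ref{infuneu}), where the paper factors $u_{n}=\left(\left\Vert \phi_{n}\right\Vert _{\infty}/\left\Vert \phi_{n}\right\Vert _{2}\right)^{2}\frac{\phi_{n}}{\left\Vert \phi_{n}\right\Vert _{\infty}}\int_{\Omega}u\frac{\phi_{n}}{\left\Vert \phi_{n}\right\Vert _{\infty}}\,dx$ and passes to the limit in each factor using part (i) and (\ref{eusigma}), whereas you prove uniform convergence of the $L^{2}$-normalized iterates via a mixed-norm variant of the part-(i) estimate and then split $u_{n}-e_{u}^{\sigma}$; the two arguments are equivalent and rest on the same lemmas.
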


\begin{proof}
Let $n_{0}$ be such that $\left\Vert \psi_{n}\right\Vert _{\infty}%
\leqslant\frac{1}{2}\left\Vert e_{u}^{\sigma}\right\Vert _{\infty}$ for all
$n\geqslant n_{0}.$ Then, as in the proof of \textit{(ii)} of Theorem
\ref{2main}, we have for all $n\geqslant n_{0}$ that
\[
\frac{1}{2}\left\Vert e_{u}^{\sigma}\right\Vert _{\infty}\leqslant\left\Vert
e_{u}^{\sigma}+\psi_{n}\right\Vert _{\infty}%
\]
and
\[
\left\Vert \frac{e_{u}^{\sigma}+\psi_{n}}{\left\Vert e_{u}^{\sigma}+\psi
_{n}\right\Vert _{\infty}}-\frac{e_{u}^{\sigma}}{\left\Vert e_{u}^{\sigma
}\right\Vert _{\infty}}\right\Vert _{\infty}\leqslant\frac{4}{\left\Vert
e_{u}^{\sigma}\right\Vert _{\infty}}\left\Vert \psi_{n}\right\Vert _{\infty}.
\]
The remaining of \textit{(i)} follows from Lemma \ref{psitozero}.

In order to prove \textit{(\ref{infuneu})}, write
\[
u_{n}=\left(  \frac{\left\Vert \phi_{n}\right\Vert _{\infty}}{\left\Vert
\phi_{n}\right\Vert _{2}}\right)  ^{2}\frac{\phi_{n}}{\left\Vert \phi
_{n}\right\Vert _{\infty}}\int_{\Omega}u\frac{\phi_{n}}{\left\Vert \phi
_{n}\right\Vert _{\infty}}dx.
\]
Since%
\[
\lim\dfrac{\left\Vert \phi_{n}\right\Vert _{\infty}}{\left\Vert \phi
_{n}\right\Vert _{2}}=\lim\dfrac{\left\Vert e_{u}^{\sigma}+\psi_{n}\right\Vert
_{\infty}}{\left\Vert e_{u}^{\sigma}+\psi_{n}\right\Vert _{2}}=\dfrac
{\left\Vert e_{u}^{\sigma}\right\Vert _{\infty}}{\left\Vert e_{u}^{\sigma
}\right\Vert _{2}}%
\]
and, from \textit{(i)},%
\[
\frac{\phi_{n}}{\left\Vert \phi_{n}\right\Vert _{\infty}}\int_{\Omega}%
u\frac{\phi_{n}}{\left\Vert \phi_{n}\right\Vert _{\infty}}dx\rightarrow
\frac{e_{u}^{\sigma}}{\left\Vert e_{u}^{\sigma}\right\Vert _{\infty}}%
\int_{\Omega}u\frac{e_{u}^{\sigma}}{\left\Vert e_{u}^{\sigma}\right\Vert
_{\infty}}dx
\]
uniformly in $\Omega$, it follows from (\ref{eusigma}) that
\begin{align*}
u_{n}  &  \rightarrow\left(  \dfrac{\left\Vert e_{u}^{\sigma}\right\Vert
_{\infty}}{\left\Vert e_{u}^{\sigma}\right\Vert _{2}}\right)  ^{2}\frac
{e_{u}^{\sigma}}{\left\Vert e_{u}^{\sigma}\right\Vert _{\infty}}\int_{\Omega
}u\frac{e_{u}^{\sigma}}{\left\Vert e_{u}^{\sigma}\right\Vert _{\infty}}dx\\
&  =\frac{e_{u}^{\sigma}}{\left\Vert e_{u}^{\sigma}\right\Vert _{2}^{2}}%
\int_{\Omega}ue_{u}^{\sigma}\,dx=\frac{e_{u}^{\sigma}}{\left\Vert
e_{u}^{\sigma}\right\Vert _{2}^{2}}\left\langle u,e_{u}^{\sigma}\right\rangle
=e_{u}^{\sigma}.
\end{align*}

Since from (\ref{0}) we have%
\[
\dfrac{\left\Vert \phi_{n}\right\Vert _{\infty}}{\left\Vert \phi
_{n+1}\right\Vert _{\infty}}=\left\vert \lambda_{u}^{\sigma}-\sigma\right\vert
\frac{\left\Vert e_{u}^{\sigma}+\psi_{n}\right\Vert _{\infty}}{\left\Vert
e_{u}^{\sigma}+\psi_{n+1}\right\Vert _{\infty}},
\]
and thus (\ref{inftolamb}) also follows from Lemma \ref{psitozero}.

Now, let $\mathcal{K}\subset\subset\operatorname*{supp}e_{u}^{\sigma}$ be
compact so that
\[
m:=\min_{\mathcal{K}}\left\vert e_{u}^{\sigma}\right\vert >0
\]
and fix $n_{0}\in\mathbb{N}$ such that
\[
\left\Vert \psi_{n}\right\Vert _{\infty}<\frac{m}{2}\text{ \ \ for
all}\ n\geqslant n_{0}.
\]
Thus if $n\geqslant n_{0}$ we have on $\mathcal{K}$%
\[
\left\vert e_{u}^{\sigma}+\psi_{n}\right\vert \geqslant\left\vert
e_{u}^{\sigma}\right\vert -\left\vert \psi_{n}\right\vert \geqslant
m-\left\Vert \psi_{n}\right\Vert _{\infty}>\frac{m}{2}.
\]
Therefore, the quotient
\[
\frac{\phi_{n}}{\phi_{n+1}}=\left(  \lambda_{u}^{\sigma}-\sigma\right)
\frac{e_{u}^{\sigma}+\psi_{n}}{e_{u}^{\sigma}+\psi_{n+1}}%
\]
makes sense on $\mathcal{K}$ for all sufficiently large $n$ and again
\textit{(iii)} follows from Lemma \ref{psitozero} since
\begin{align*}
\left\vert \frac{\phi_{n}}{\phi_{n+1}}-\left(  \lambda_{u}^{\sigma}%
-\sigma\right)  \right\vert  &  =\left\vert \lambda_{u}^{\sigma}%
-\sigma\right\vert \left\vert \frac{e_{u}^{\sigma}+\psi_{n}}{e_{u}^{\sigma
}+\psi_{n+1}}-1\right\vert \\
&  =\left\vert \lambda_{u}^{\sigma}-\sigma\right\vert \left\vert \frac
{\psi_{n}-\psi_{n+1}}{e_{u}^{\sigma}+\psi_{n+1}}\right\vert \\
&  \leqslant\frac{2\left\vert \lambda_{u}^{\sigma}-\sigma\right\vert }%
{m}\left\vert \psi_{n}-\psi_{n+1}\right\vert .
\end{align*}

\end{proof}

\section{Normalization at each step \label{normaliz}}

In order to avoid numerical problems, such as overflow or underflow, it is
usual to normalize the right-hand-side function in each inverse iteration.
Although this procedure changes the sequence of iterates it maintains convergences.

In fact, let $v_{n}$ be defined by%
\begin{equation}
v_{0}=\frac{u}{\left\Vert u\right\Vert }\text{ \ and \ }\left\{
\begin{array}
[c]{ll}%
-\Delta_{\sigma}v_{n+1}=\dfrac{v_{n}}{\left\Vert v_{n}\right\Vert } & \text{
\ \ in }\Omega,\\
v_{n+1}=0 & \text{ \ \ on\ }\partial\Omega.
\end{array}
\right.  \label{normalization}%
\end{equation}
where $\left\Vert \cdot\right\Vert $ may denote the $L^{2}$-norm or the
$L^{\infty}$-norm. Then, since $\sigma$ is not an eigenvalue of $-\Delta$ it
is easy to verify that%
\begin{equation}
\phi_{n+1}=\left\Vert \phi_{n}\right\Vert v_{n+1}. \label{vphi}%
\end{equation}

Hence, for example, if one uses $\left\Vert \cdot\right\Vert =\left\Vert
\cdot\right\Vert _{2}$ then
\begin{equation}
\left\langle u,\frac{v_{n}}{\left\Vert v_{n}\right\Vert _{2}}\right\rangle
_{2}\dfrac{v_{n}}{\left\Vert v_{n}\right\Vert _{2}}=u_{n}\rightarrow
e_{u}^{\sigma} \label{unvn}%
\end{equation}
both uniformly and in $L^{2}.$ Note that this sequence is exactly the sequence
$\left\{  u_{n}\right\}  $ defined by (\ref{un}) and rewritten in terms of the
sequence $\left\{  v_{n}\right\}  .$

Moreover, in view of (\ref{vphi}) and (\ref{originalQr}) one also has
\begin{equation}
\mathcal{R}(v_{n})=\mathcal{R}(\phi_{n})\rightarrow\lambda_{u}^{\sigma}.
\label{RR}%
\end{equation}

If $\left\Vert \cdot\right\Vert =\left\Vert \cdot\right\Vert _{\infty}$ then
one has the following uniform convergence in each compact $\mathcal{K}%
\subset\left\{  x:e_{u}^{\sigma}(x)\neq0\right\}  :$%
\[
\dfrac{v_{n}}{v_{n+1}}\rightarrow\frac{\lambda_{u}^{\sigma}-\sigma}{\left\vert
\lambda_{u}^{\sigma}-\sigma\right\vert }=\left\{
\begin{array}
[c]{r}%
1\\
-1
\end{array}%
\begin{array}
[c]{l}%
\text{if }\lambda_{u}^{\sigma}>\sigma\\
\text{if \ }\lambda_{u}^{\sigma}<\sigma.
\end{array}
\right.
\]

\section{Numerical Tests\label{tests}}

In this section we present some numerical tests on the unit interval, unit
disk and unit square. The inverse iteration with shift was implemented
starting from the unit constant function $u\equiv1$ on these domains.
Eigenvalue approximations were computed by running our Algorithm 1 and taking
(in the line 9) the following sequences considered in this paper:
\[
\mu_{n}:=\dfrac{\phi_{n}}{\phi_{n+1}}(x_{0})+\sigma,\text{ \ \ }\gamma
_{n}:=\frac{\Vert\phi_{n}\Vert_{2}}{\Vert\phi_{n+1}\Vert_{2}}+\sigma
\]
and
\begin{equation}
\mathcal{R}(\phi_{n})=\sigma+\frac{\left\langle \phi_{n},\phi_{n-1}%
\right\rangle _{2}}{\left\Vert \phi_{n}\right\Vert _{2}^{2}}. \label{weakRQ2}%
\end{equation}
This last sequence is an alternative form of the Rayleigh quotient evaluated
at $\phi_{n}$ according to (\ref{weakRQ}). As previously remarked, the
numerical advantage of writing the Rayleigh quotient in this form is that one
need not compute the gradient $\nabla\phi_{n}.$

Taking into account (\ref{RR}) we also compute eigenvalue approximations using
the sequence $\mathcal{R}(v_{n})$ where $v_{n}$ is defined by
(\ref{normalization}). Namely, we use the following alternative expression for
this sequence:%
\begin{equation}
\mathcal{R}(v_{n})=\sigma+\frac{\left\langle v_{n},v_{n-1}\right\rangle _{2}%
}{\left\Vert v_{n}\right\Vert _{2}^{2}}. \label{weakRQv}%
\end{equation}
Notwithstanding the (theoretical) equality $\mathcal{R}(v_{n})=\mathcal{R}%
(\phi_{n})$, our tests indicate some remarkable numerical differences between them.

The non-normalization of the function $\phi_{n}$ on each step in Algorithm 1
tends to attribute to it smaller values at each iteration. Thus, as a
numerical phenomenon, the quotient $\phi_{n}\slash \phi_{n+1}$ tends to assume
the value 1, what makes the sequences $\mu_{n},\ \gamma_{n} \text{ and }
\mathcal{R}(\phi_{n})$ converge to $\sigma+ 1$.

This phenomenon may restrict the use of these sequences. However, handled with
care by controlling the number of iterations and points in the grid, they can
provide good approximations to the eigenvalues, as our numerical tests indicate.

On the other hand, the sequence $\mathcal{R}(v_{n})$ seems to be more robust,
since we did not observe this phenomenon when using it. Another indication of
its numerical stability is its tendency to better capture the correct
eigenvalues (i. e. those belonging to the spectrum of the starting function),
than the other sequences.

The graphs of the eigenfunction approximations were constructed by using the
sequence $u_{n}$ in (\ref{unvn}). In Algorithm 2, these approximations can be
viewed as the combination of lines 9 and 10 with $\phi_{n+1}^{k}$ replaced by
$v_{n+1}^{k}$ (normalizing on each step).

To show the efficiency of inverse iteration with shift we used neither the
most efficient available method for solving the underlying differential
equation nor a fine grid, but one of the most basic methods, finite
differences, and a relatively coarse grid. Integrals were computed via the
Simpson composite method.

\subsection{Eigenvalues and eigenfunctions for the unit interval $[0,1]$}

In this case, (\ref{iteration}) becomes the following boundary value problem
\[
\left\{
\begin{array}
[c]{l}%
-\phi_{n+1}^{\prime\prime}-\sigma\phi_{n+1}=\phi_{n}\\
\phi_{n+1}(0)=\phi_{n+1}(1)=0.
\end{array}
\right.
\]
One can verify that $\lambda_{k}=k^{2}\pi^{2}$ and that the function
$u\equiv1$ does not have components corresponding to $\lambda_{k}$ for $k$ even.

We present in Table 1 exact and approximated eigenvalues of the Laplacian on
the unit interval. The shift was set to the corresponding exact eigenvalue
minus $0.1$ (that is, $\sigma_{k}:=\lambda_{k}-0.1$), and a grid of only 101
nodes was used. As shown in the last column, the sequence $\mathcal{R}(v_{n})$
seems to capture only the eigenvalues $\lambda_{u}^{\sigma}$ that appear on
the spectral decomposition of the function $u\equiv1.$ In fact, we note that
for even values of $k$ the shift is closer to $\lambda_{k}$ than
$\lambda_{k-1}$ and even so the corresponding sequence $\mathcal{R}(v_{n})$
converges to $\lambda_{k-1}$ which is the correct eigenvalue.

In Figure 1 we present the graphs of the first eight approximated eigenfunctions.

\begin{center}
\begin{table}[h]
%\label{intervalTable}
%\par
\par
\begin{center}%
\begin{tabular}
[c]{clllll}\hline
$k$ & $\lambda_{k}$ & $\mu_{10}$ & $\gamma_{10}$ & $\mathcal{R}(\phi_{10})$ &
$\mathcal{R}(v_{10})$\\\hline
$1$ & $9.8696$ & $9.8688$ & $9.8688$ & $9.8688$ & $9.8688$\\
$2$ & $39.4784$ & $39.4654$ & $39.4654$ & $39.4654$ & $9.8691$\\
$3$ & $88.8264$ & $88.7607$ & $88.7607$ & $88.7607$ & $88.7607$\\
$4$ & $157.914$ & $157.706$ & $157.921$ & $157.706$ & $89.1623$\\
$5$ & $246.740$ & $246.233$ & $247.047$ & $246.233$ & $246.233$\\
$6$ & $355.306$ & $354.255$ & $356.157$ & $356.206$ & $252.178$\\
$7$ & $483.611$ & $481.665$ & $485.356$ & $481.665$ & $481.665$\\
$8$ & $631.655\hspace{0.5in}$ & $628.335\hspace{0.5in}$ & $634.773\hspace
{0.5in}$ & $632.555\hspace{0.5in}$ & $514.785$\\\hline
\end{tabular}
\end{center}
\caption{Exact and approximated eigenvalues on the unit interval $[0,1]$
obtained from the inverse iteration with shift starting from the unit
function. The shift $\sigma_{k}:=\lambda_{k}-0.1$ and a grid containing 101
nodes were used.}%
\end{table}
\end{center}

\begin{figure}[ptb]
\begin{center}
\includegraphics[height=6cm]{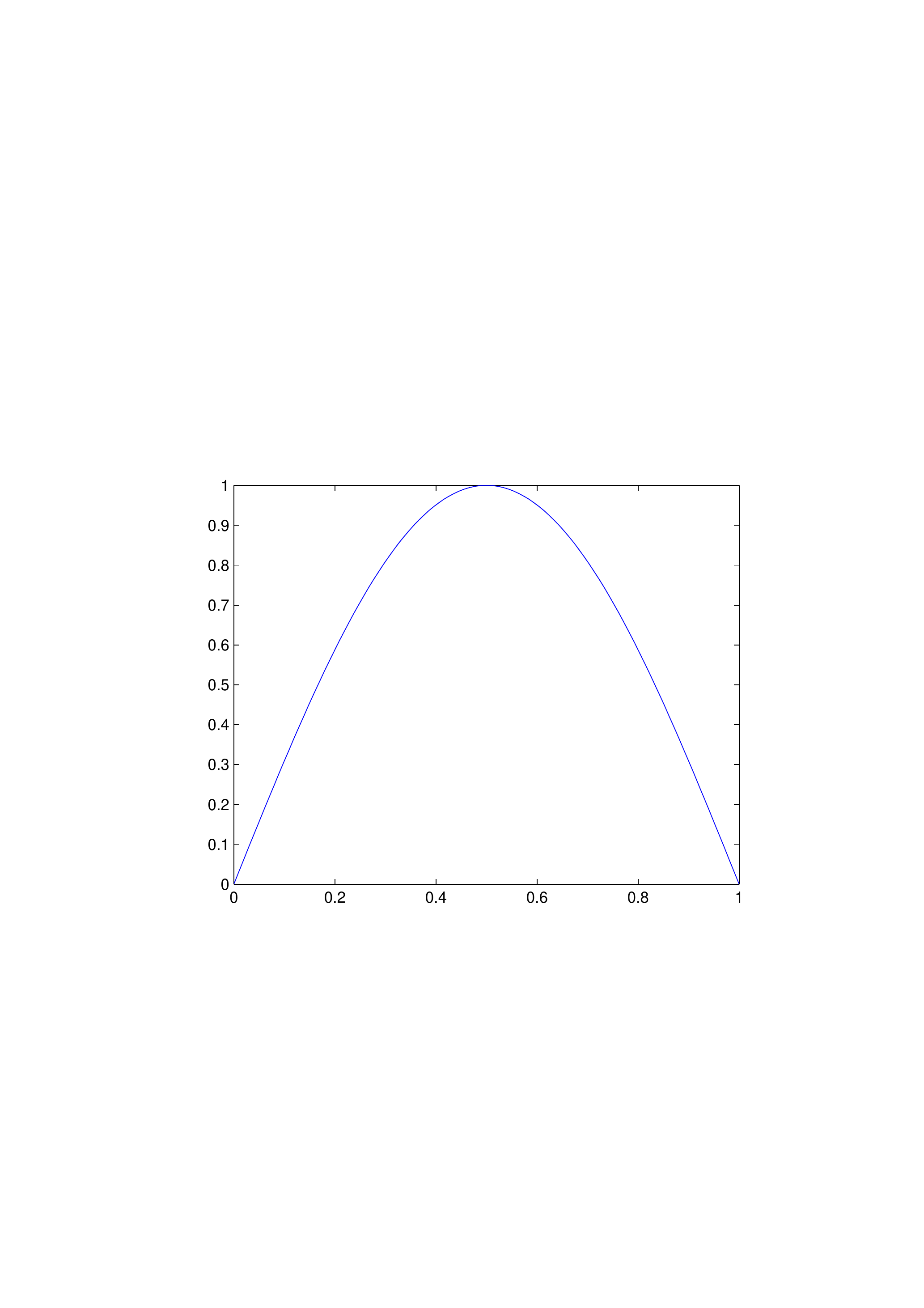}
\includegraphics[height=6cm]{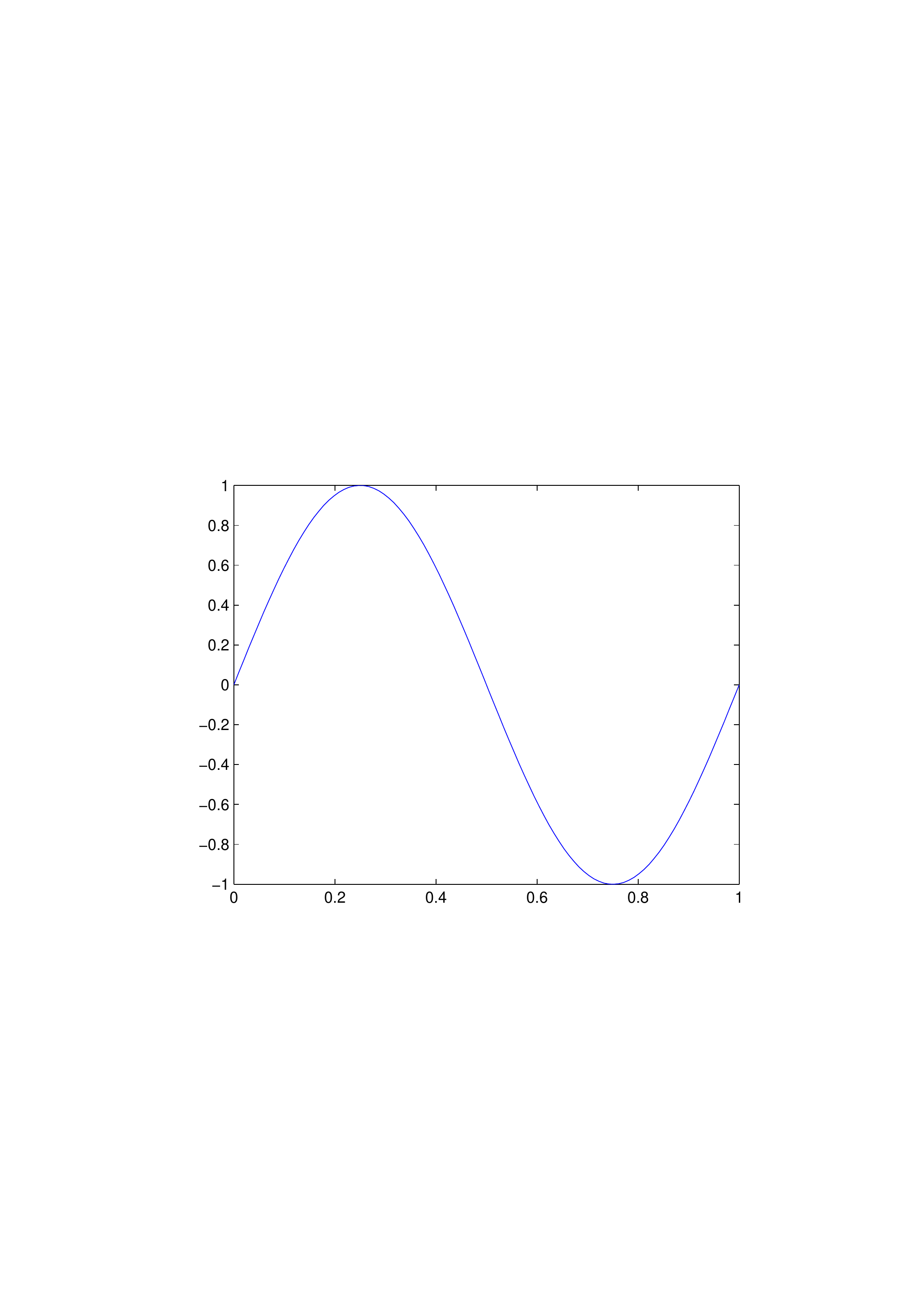}
\includegraphics[height=6cm]{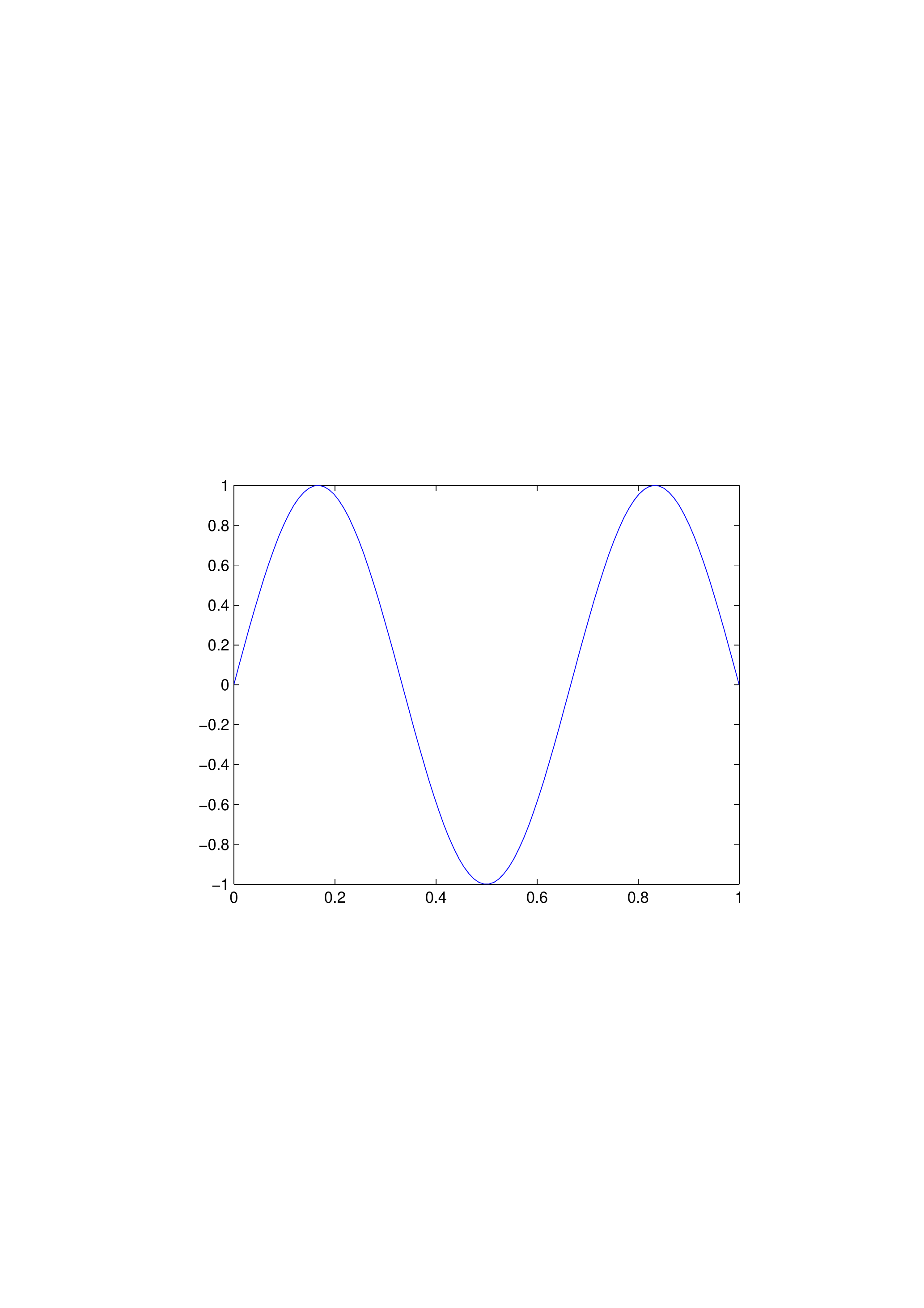}
\includegraphics[height=6cm]{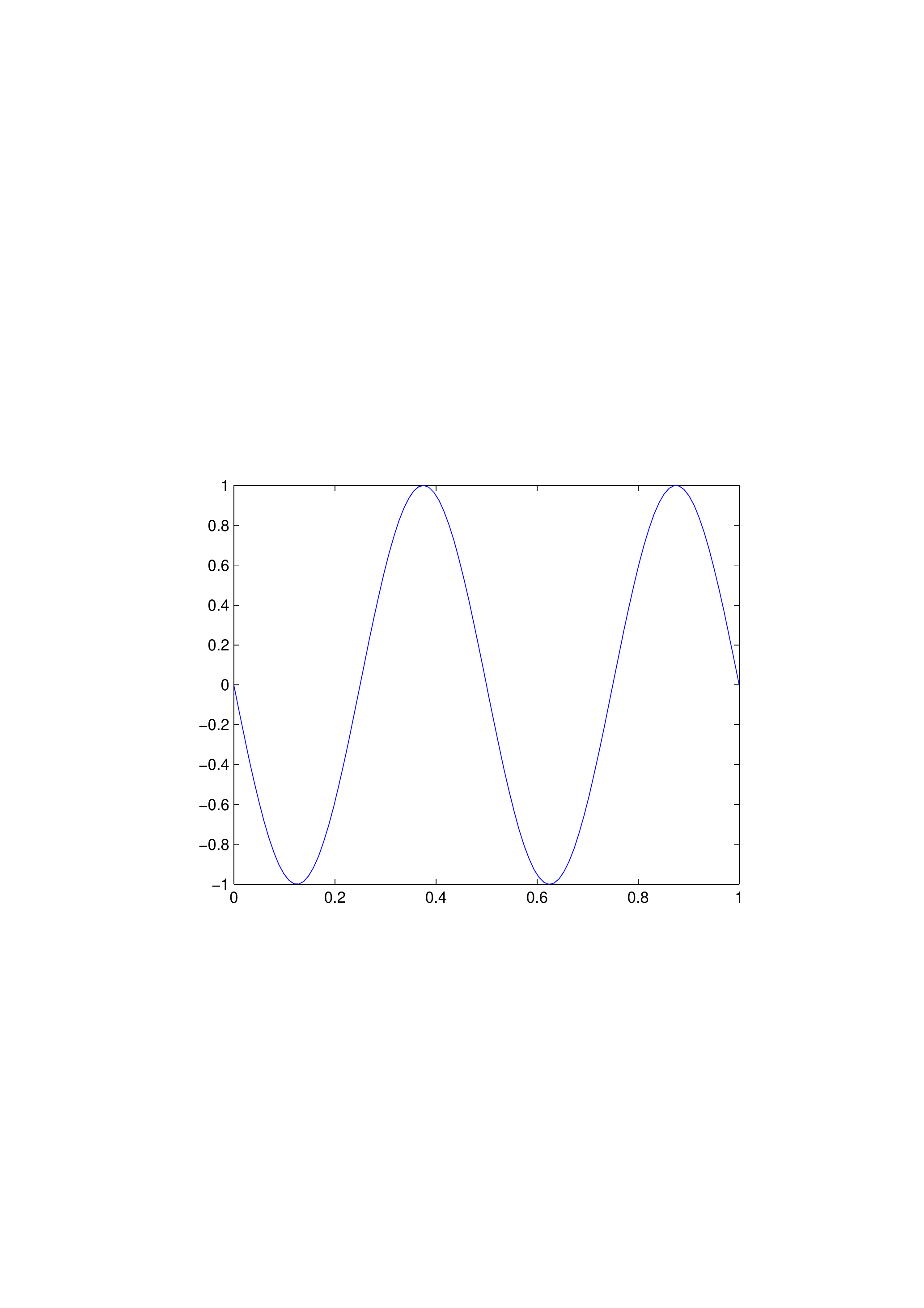}
\vspace{-2cm}
\includegraphics[height=6cm]{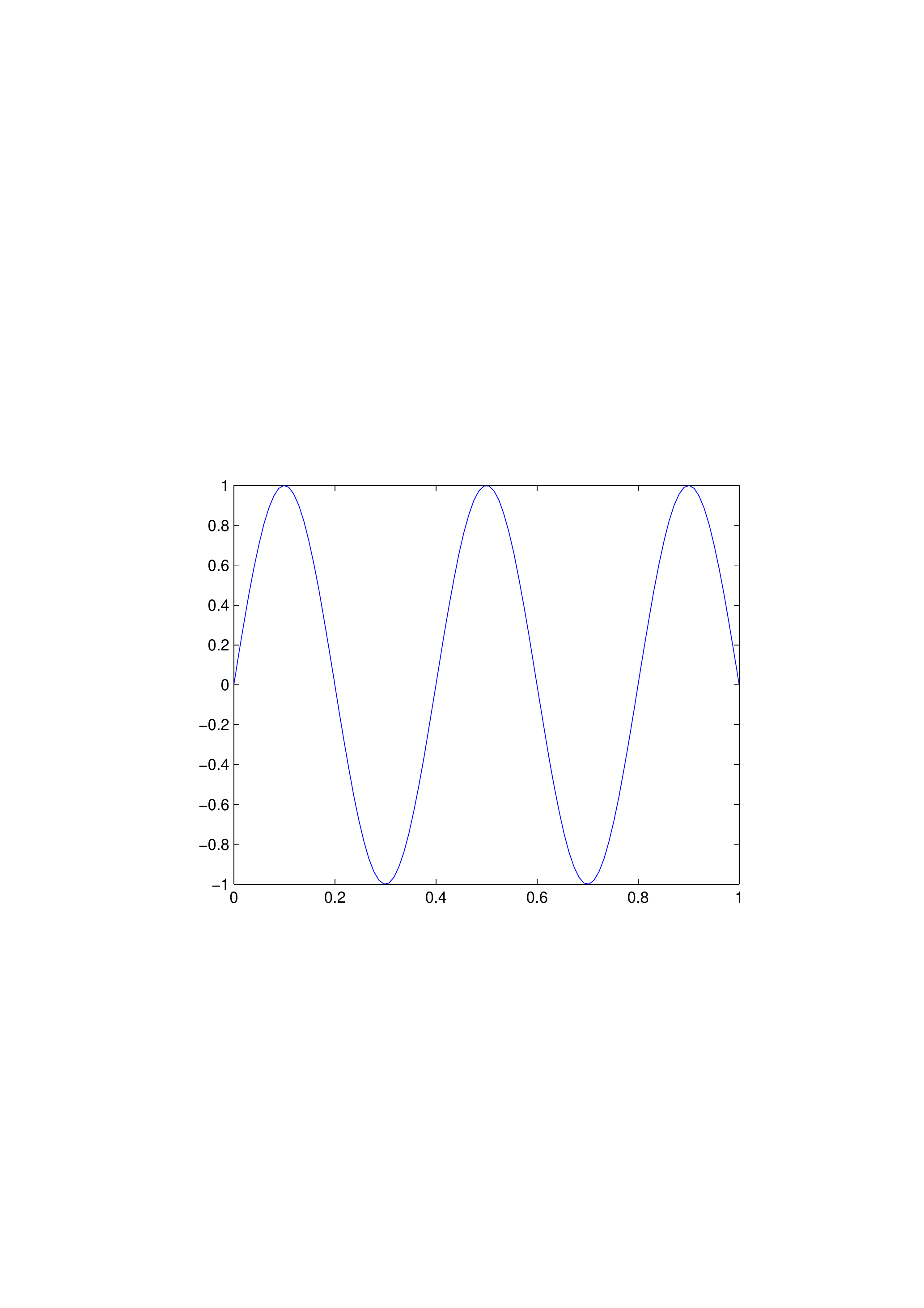}
\includegraphics[height=6cm]{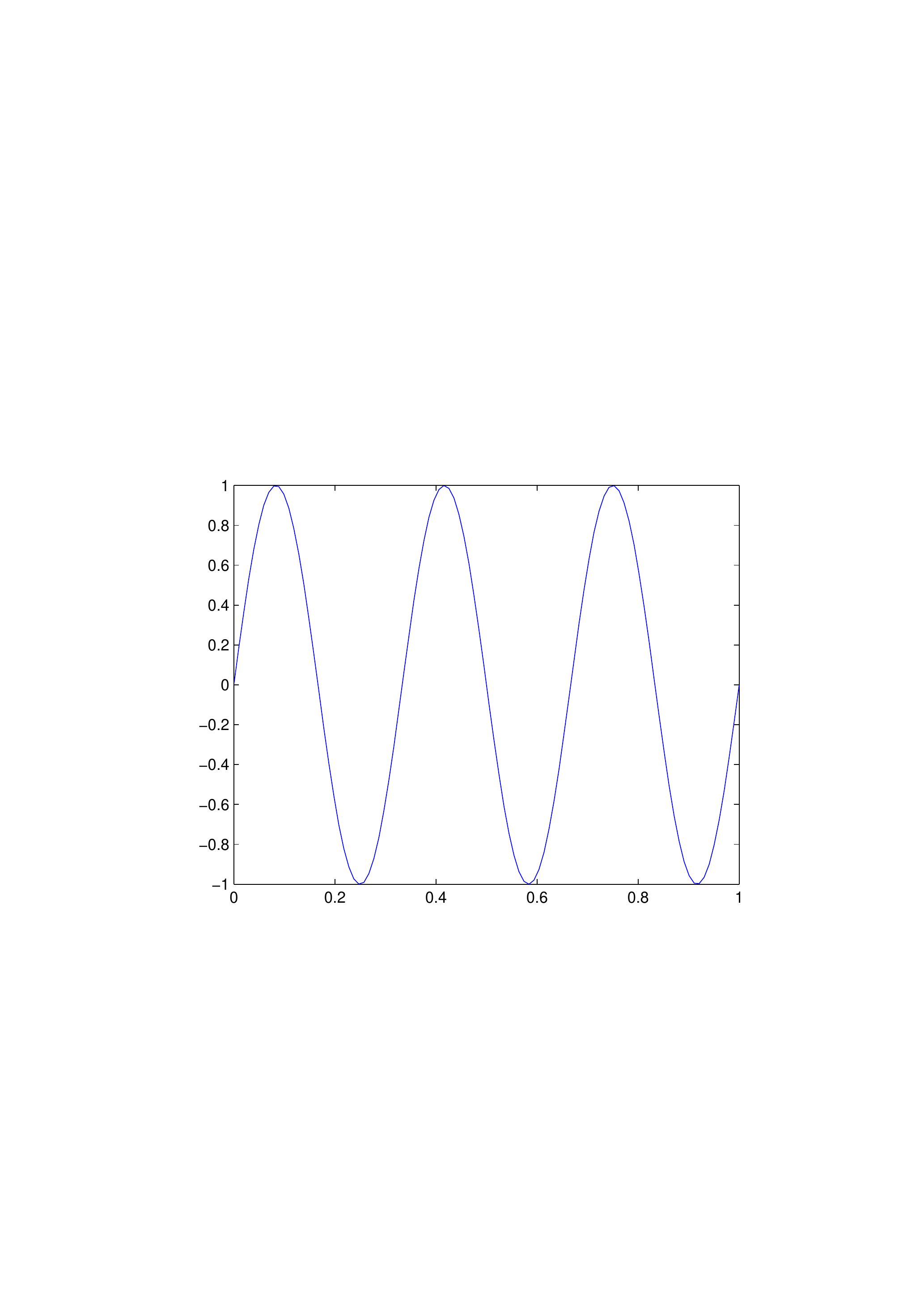}
\includegraphics[height=6cm]{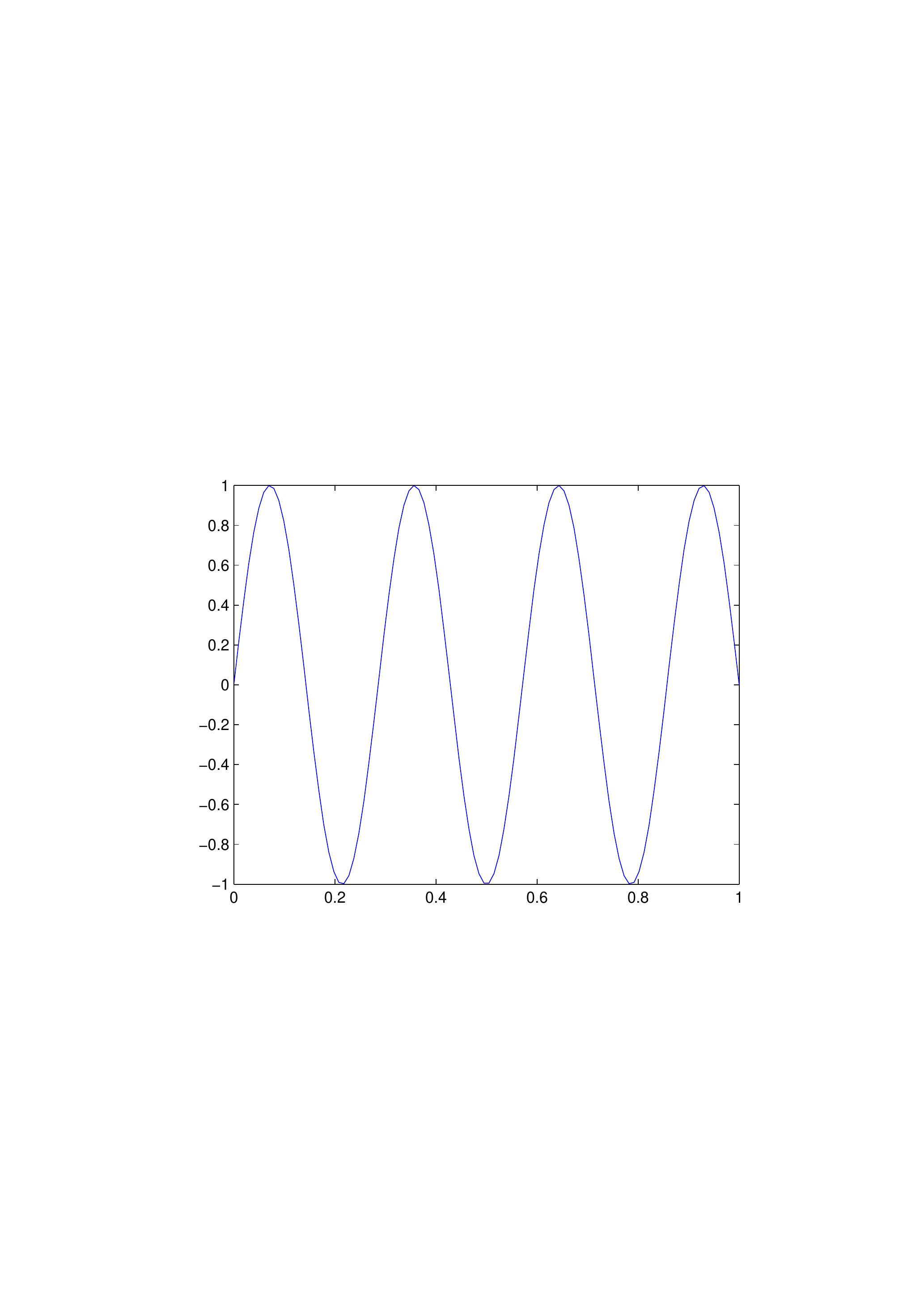}
\includegraphics[height=6cm]{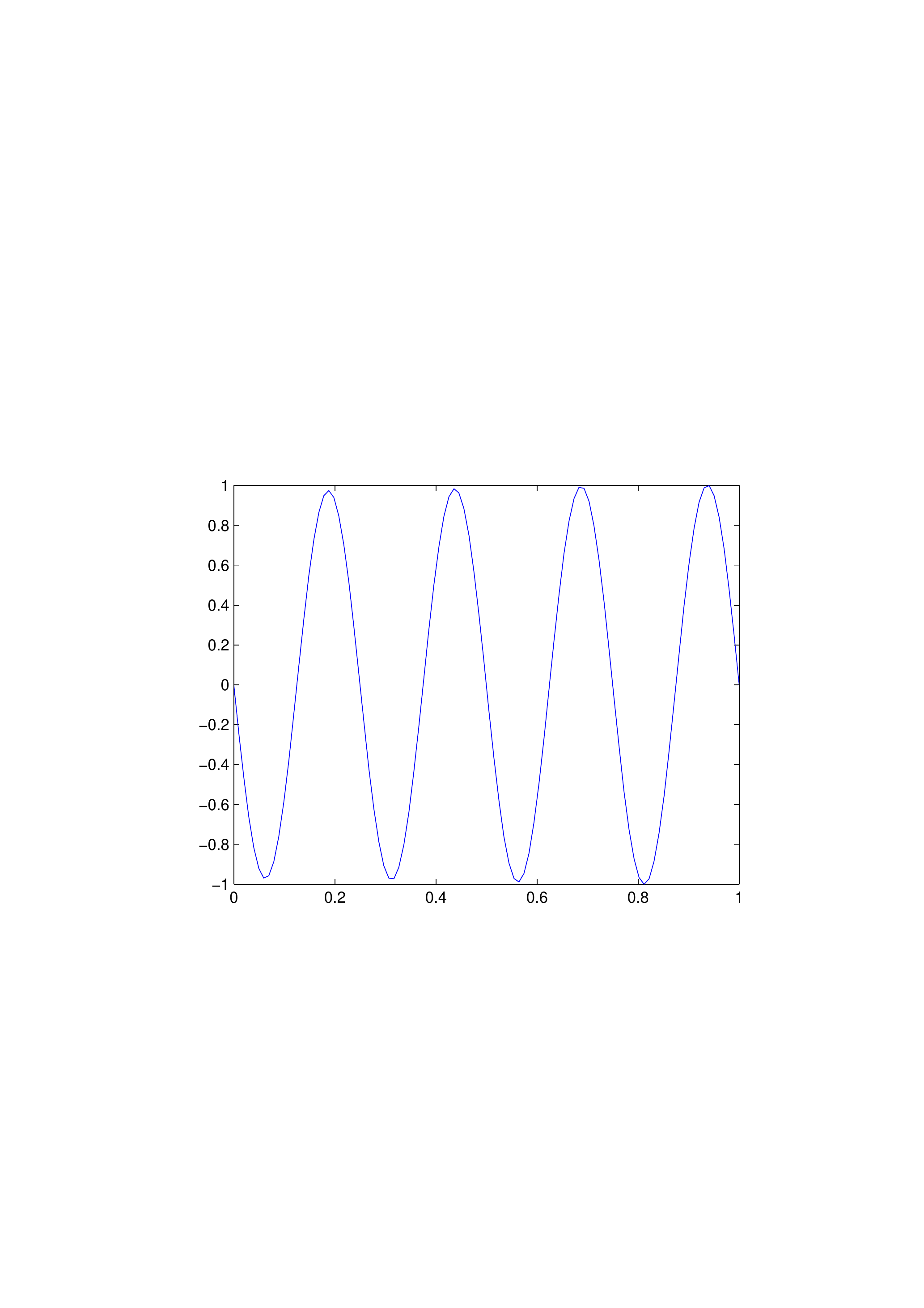}
\end{center}
\caption{First eight approximated eigenfunctions of the Laplacian on $[0,1]$
obtained from the inverse iteration with shift starting from the unit
function.}%
\end{figure}

Table 1 also exemplifies the numerical convergence to $\sigma+1$ of the
non-normalized sequence $\mathcal{R}(\phi_{n})$, which happened to the
approximations of $\lambda_{6}$ and $\lambda_{8}$. For the first one, for
example, the closest approximation achieved is $\mathcal{R}(\phi_{4}%
)=280.278$. For $n>4$ the quotient collapses to 1 and the result is spurious.
In order to compute a correct approximation of this eigenvalue using this
sequence, a finer grid should be used.

In Table 2 we show the result of calculating the first 1,500 eigenvalues of
the unit interval using the normalized sequence $\mathcal{R}(v_{n})$ with 30
iterations and a grid containing 10,001 nodes. The relative error between the
computed eigenvalue $\mathcal{R}(v_{n})$ and the exact eigenvalue $\lambda$ is
defined by%
\[
\epsilon(\mathcal{R}(v_{n}),\lambda)=\Big\vert\dfrac{\mathcal{R}%
(v_{n})-\lambda}{\lambda}\Big\vert.
\]

The shift used to make Table 2 is $\sigma_{k}:=0.99\lambda_{k}$, which has an
initial relative error of 1\%. Such error is huge for great eigenvalues and
the interval $(\sigma_{k},\lambda_{k})$ may contain many other eigenvalues.
Thus, it is likely to happen that this shift makes the sequence converge to an
eigenvalue $\lambda_{r}$ different from $\lambda_{k}$. With this in mind, we
considered that $\mathcal{R}(v_{30})$ correctly approximated an eigenvalue
$\lambda_{r}$ if $\left\vert \lambda_{r}-\mathcal{R}(v_{30})\right\vert
=\min_{s}\left\vert \lambda_{s}-\mathcal{R}(v_{30})\right\vert $, and that
$\mathcal{R}(v_{30})$ converged to $\lambda_{r}$ if their relative error is
less than $10^{-3}$.

%a tolerance value.

Table 2 reveals that among the 1,500 shifts used, all of them approximated an
eigenvalue with a relative error of order of magnitude $10^{-3}$, and that
1208 converged with a relative error less than $10^{-3}$.

\begin{center}
\begin{table}[h]
\begin{center}%
\begin{tabular}
[c]{cllllll}\hline
$\epsilon$ & $10^{-3}$ & $10^{-4}$ & $10^{-5}$ & $10^{-6}$ & $10^{-7}$ &
$\leqslant10^{-8}$\\\hline
$N_{\lambda}$ & $292\hspace{0.5in}$ & $1004\hspace{0.5in}$ & $156\hspace
{0.5in}$ & $39\hspace{0.5in}$ & $6\hspace{0.5in}$ & $3$\\\hline
\end{tabular}
\end{center}
\caption{Number $N_{\lambda}$ of approximates with relative error of order
$\epsilon$ for $\sigma_{k}:= 0.99 \lambda_{k}$, grid containing 10,001 points
and 30 iterations.}%
\end{table}
\end{center}

\noindent Among the 1208 converged approximations, 547 refer to eigenvalues
$\lambda_{k}$ with $k$ even. This shows that the sequence $\mathcal{R}(v_{n})$
can converge to an eigenvalue that does not belong to the spectrum of the unit function.

In order to better understand convergence of shifts with large initial
relative error, we used a grid of 10,001 nodes and 30 iterations of the
sequence $\mathcal{R}(v_{n})$ with shifts $\sigma_{k}:= 0.5(\lambda
_{k+1}+\lambda_{k})$, where $k$ runs from 1 to 100. The result is presented in
Table 3. As we can see, despite the shift being located exactly between the
eigenvalues, only one did not converge to an eigenvalue.

\begin{center}
\begin{table}[h]
%\label{intervalTable}
%\par
\par
\begin{center}%
\begin{tabular}
[c]{clllll}\hline
$\epsilon$ & $10^{-4}$ & $10^{-5}$ & $10^{-6}$ & $10^{-7}$ & $\leqslant
10^{-8}$\\\hline
$N_{\lambda}$ & $1\hspace{0.5in}$ & $66\hspace{0.5in}$ & $25\hspace{0.5in}$ &
$4\hspace{0.5in}$ & $4$\\\hline
\end{tabular}
\end{center}
\caption{Number $N_{\lambda}$ of approximates with relative error of order
$\epsilon$ for $\sigma_{k}:= 0.5(\lambda_{k+1}+\lambda_{k})$, grid containing
10,001 points and 30 iterations.}%
\end{table}
\end{center}

Another numerical experiment using shifts with large initial error was done
with randomly chosen shifts. We generated 100 random numbers (shifts) on the
interval $(0,\lambda_{50})$ and used a grid of 10,001 nodes and 30 iterations.
Table 4 shows the initial relative errors of the shifts, as well as the errors
after 30 iterations of sequence $\mathcal{R}(v_{n})$. As we can see, only one
of these shifts did not converge to an eigenvalue, and most of them converged
with a relative error of order $10^{-5}$.

\begin{center}
\begin{table}[h]
%\label{intervalTable}
%\par
\par
\begin{center}%
\begin{tabular}
[c]{cllllll}\hline
$\epsilon$ & $\geqslant10^{-2}$ & $10^{-3}$ & $10^{-4}$ & $10^{-5}$ &
$10^{-6}$ & $\leqslant10^{-7}$\\\hline
$N_{\sigma}$ & $38\hspace{0.5in}$ & $60\hspace{0.5in}$ & $2\hspace{0.5in}$ &
$0\hspace{0.5in}$ & $0$ & $0$\\
$N_{\lambda}$ & $0\hspace{0.5in}$ & $1\hspace{0.5in}$ & $2\hspace{0.5in}$ &
$80\hspace{0.5in}$ & $16\hspace{0.5in}$ & $1$\\\hline
\end{tabular}
\end{center}
\caption{Numbers $N_{\sigma}$ of shifts and $N_{\lambda}$ of approximates with
relative errors of order $\epsilon$. Here $\sigma$ was randomly chosen on the
interval $(0, \lambda_{50})$. A grid containing 10,001 points and 30
iterations were used.}%
\end{table}
\end{center}

\subsection{Radial eigenvalues and eigenfunctions for the unit disk}

We computed only the radial eigenfunctions for the unit disk $\Omega=\left\{
x\in\mathbb{R}^{2}:\left\vert x\right\vert \leq1\right\}  .$ In this case
$\phi_{n}=\phi_{n}(r)$ where $r=\left\vert x\right\vert ,$ and
(\ref{iteration}) becomes the Sturm-Liouville problem type%

\[
\left\{
\begin{array}
[c]{l}%
-\dfrac{(r\phi_{n+1}^{\prime})^{\prime}}{r}-\sigma\phi_{n+1}=\phi_{n},\text{
\ \ }0<r<1\\
\\
\phi_{n+1}^{\prime}(0)=0=\phi_{n+1}(1).
\end{array}
\right.
\]
Note that the function $u\equiv1$ has components in all radial eigenspaces. In
fact, if $e=e(r)$ denotes a radial eigenfunction corresponding to an
eigenvalue $\lambda>0$ in $\Omega$ then
\[
\left\{
\begin{array}
[c]{l}%
\Delta_{r}e=\dfrac{(re^{\prime})^{\prime}}{r}=-\lambda e\text{ \ \ }0<r<1\\
\\
e^{\prime}(0)=0=e(1).
\end{array}
\right.
\]
Therefore,
\begin{align*}
\left\langle u,1\right\rangle _{2}  &  =\int_{\left\vert x\right\vert \leq
1}e(\left\vert x\right\vert )dx\\
&  =\int_{0}^{1}\int_{\left\vert x\right\vert =r}e(r)dS_{x}dr=2\pi\int_{0}%
^{1}e(r)rdr=-\frac{2\pi}{\lambda}\int_{0}^{1}(re^{\prime}(r))^{\prime
}dr=-\frac{2\pi}{\lambda}e^{\prime}(1)\neq0
\end{align*}
because of the uniqueness of the initial value problems for the ODE above at
$r=1.$

We present in Table 5 the exact and approximated first eight radial
eigenvalues for the Laplacian on the unit disk, calculated using the shift
$\sigma_{k}:=\lambda_{k}-0.1$ and a grid containing 201 nodes.

\begin{table}[h]
\label{diskTable}
\par
\begin{center}%
\begin{tabular}
[c]{llllll}\hline
$k$ & $\lambda_{k}$ & $\mu_{10}$ & $\gamma_{10}$ & $\mathcal{R}(\phi_{10})$ &
$\mathcal{R}(v_{10})$\\\hline
$1$ & $5.7831$ & $5.7834$ & $5.7834$ & $5.7392$ & $5.7392$\\
$2$ & $30.4713$ & $30.4698$ & $30.4698$ & $30.4396$ & $30.4396$\\
$3$ & $74.887$ & $74.865$ & $74.865$ & $74.847$ & $74.847$\\
$4$ & $139.040$ & $138.942$ & $138.942$ & $138.942$ & $138.942$\\
$5$ & $222.932$ & $222.646$ & $223.018$ & $222.674$ & $222.674$\\
$6$ & $326.563$ & $325.901$ & $327.025$ & $325.968$ & $326.563$\\
$7$ & $449.934$ & $448.611$ & $451.057$ & $448.729$ & $448.729$\\
$8\hspace{0.3in}$ & $593.043\hspace{0.3in}$ & $590.663\hspace{0.3in}$ &
$595.223\hspace{0.3in}$ & $590.836\hspace{0.3in}$ & $590.836$\\\hline
\end{tabular}
\end{center}
\caption{The first eight Laplacian radial eigenvalues on the unit disk
obtained from the inverse iteration with shift starting from the unit
function.}%
\end{table}

The graphs of the first eight approximated radial eigenfunctions of the
Laplacian obtained by our Algorithm 2 are displayed in Figure 2.

\begin{figure}[ptb]
\begin{center}
\includegraphics[height=6cm]{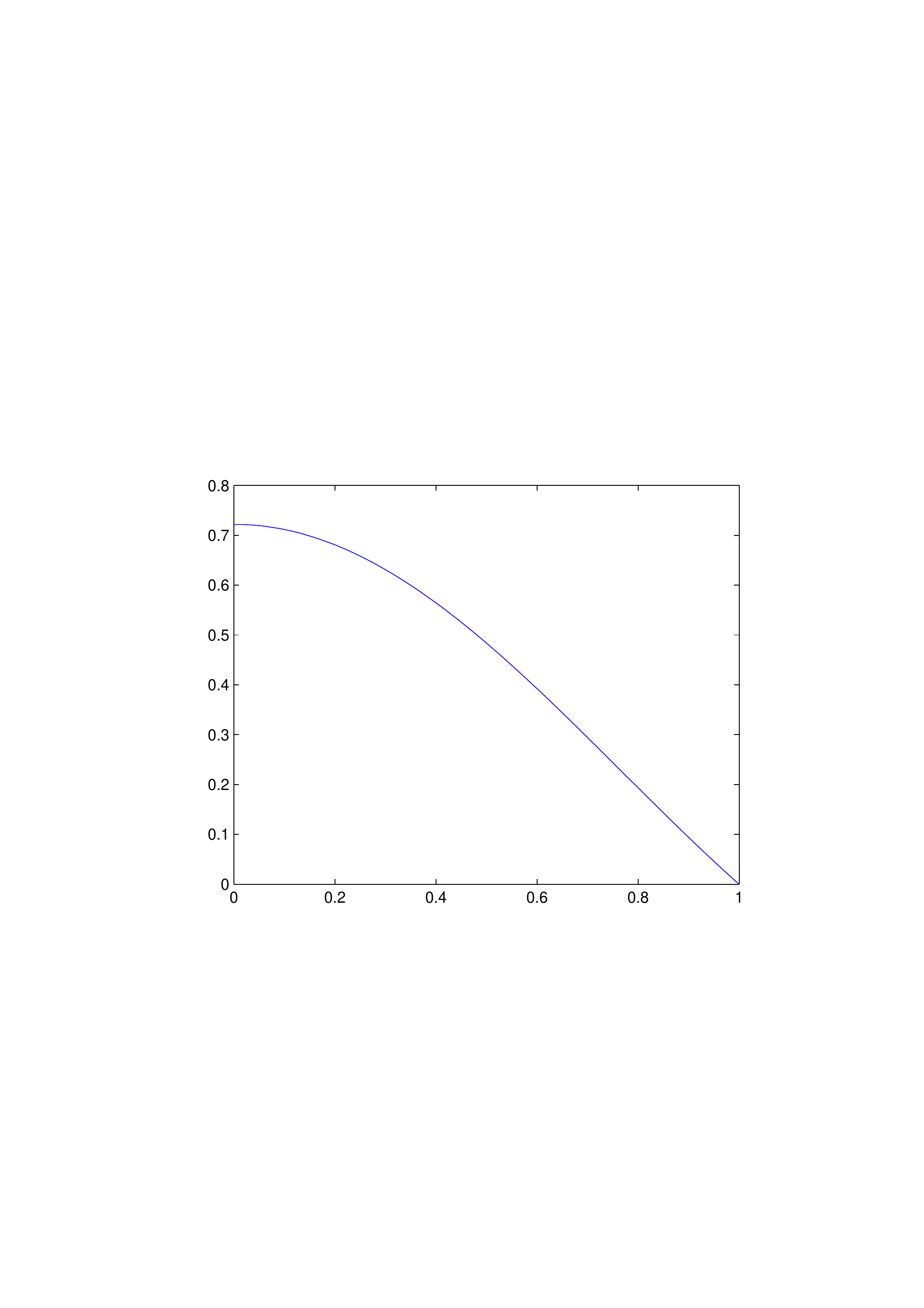}
\includegraphics[height=6cm]{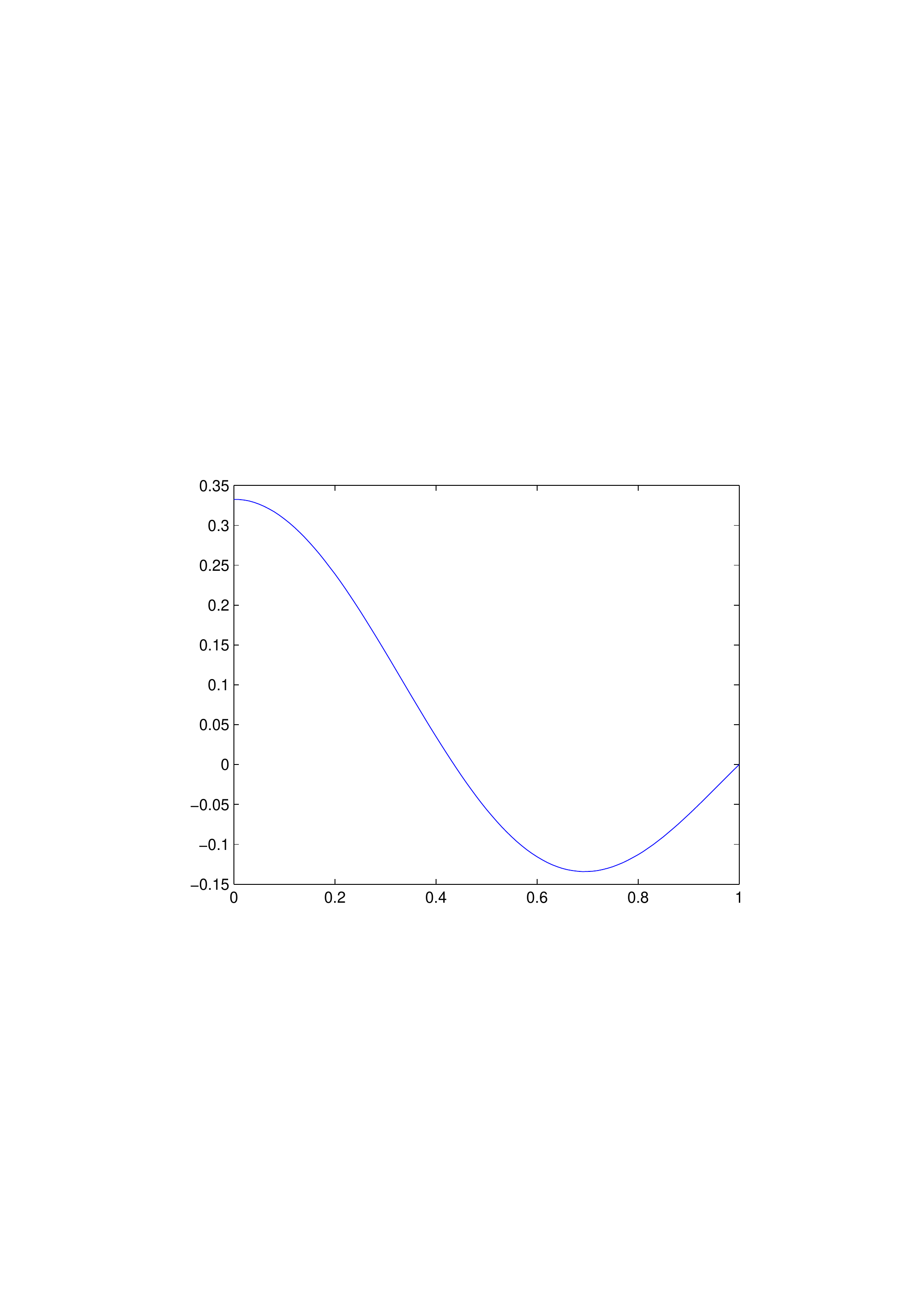}
\includegraphics[height=6cm]{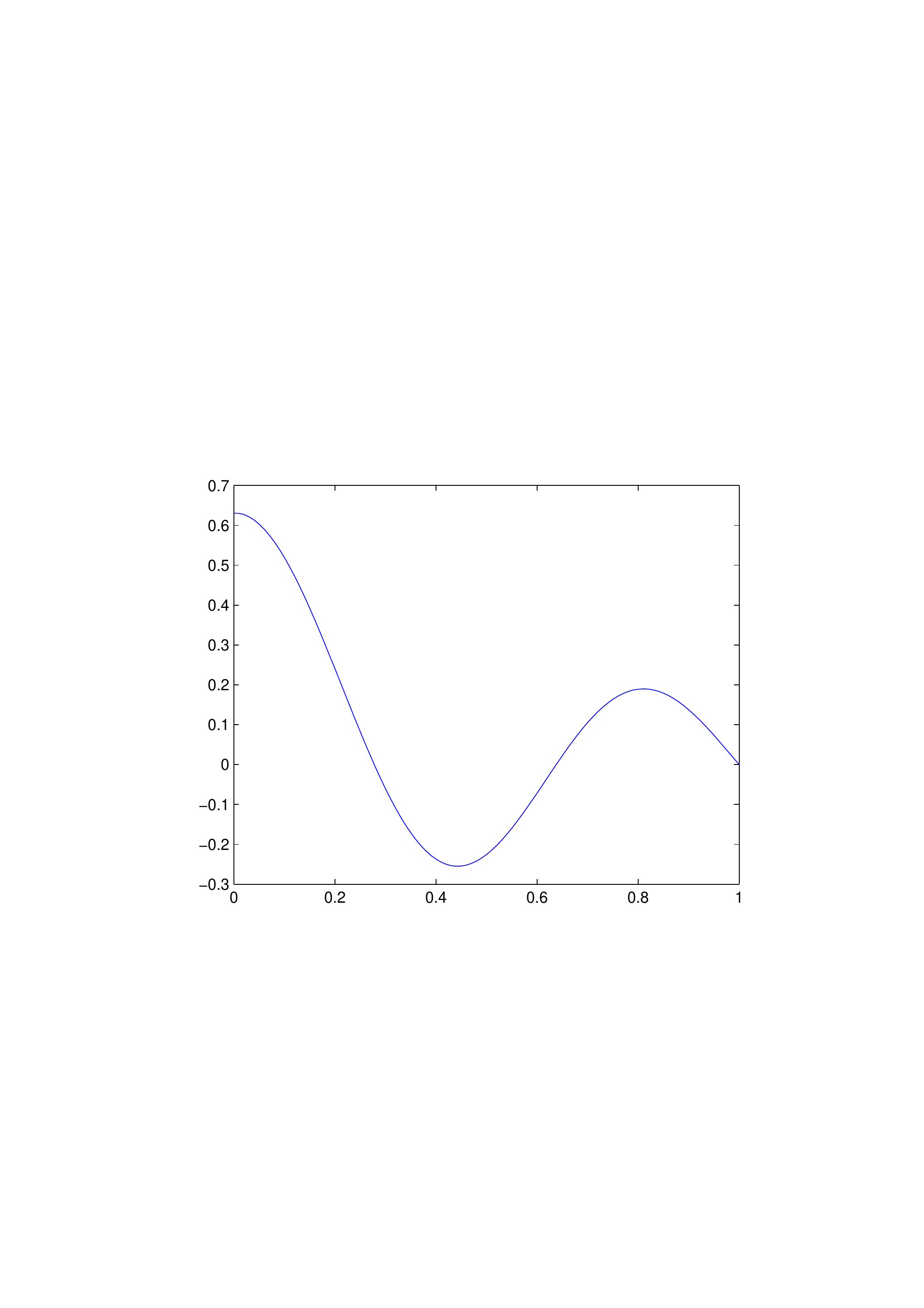}
\includegraphics[height=6cm]{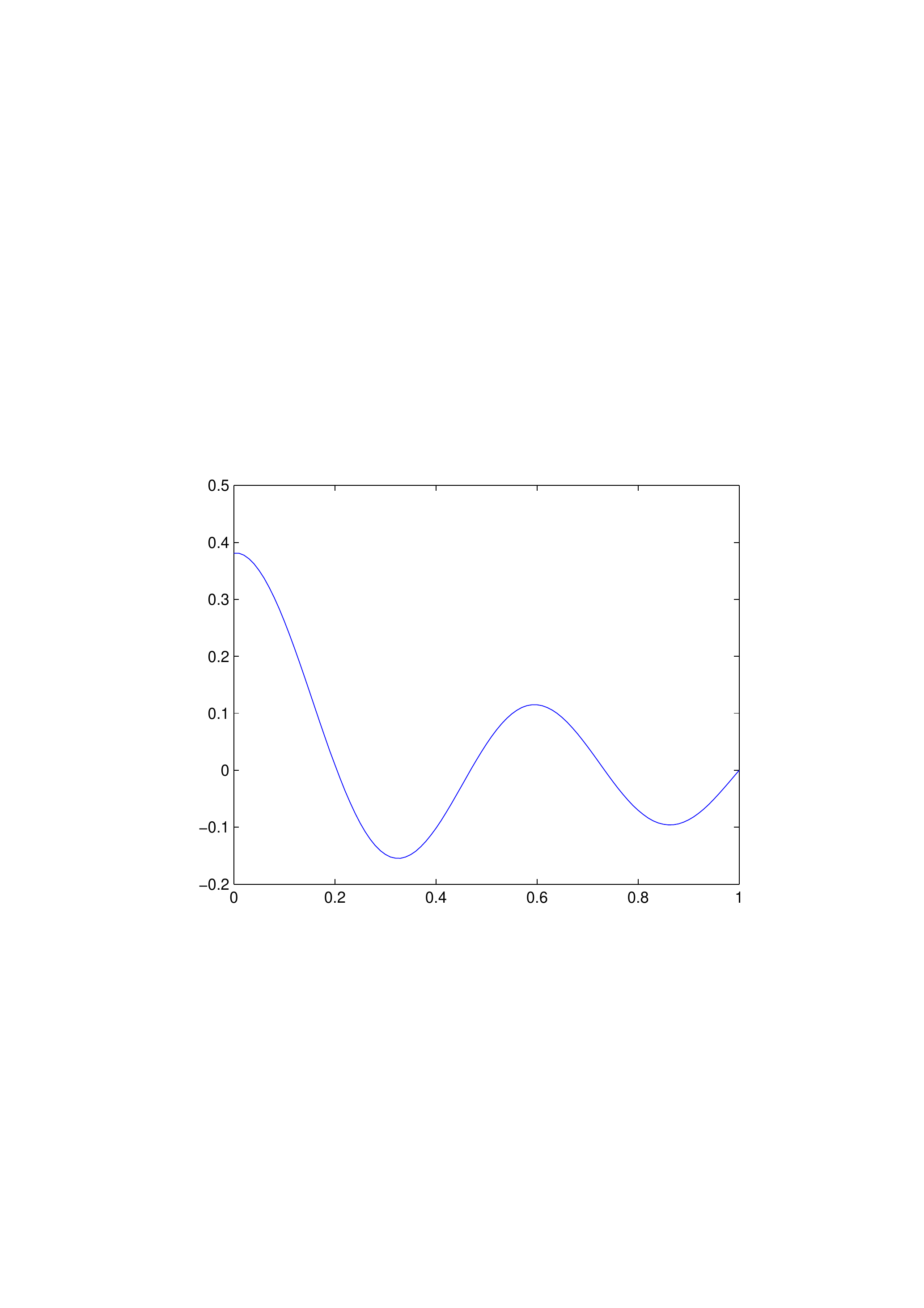}
\vspace{-2cm}
\includegraphics[height=6cm]{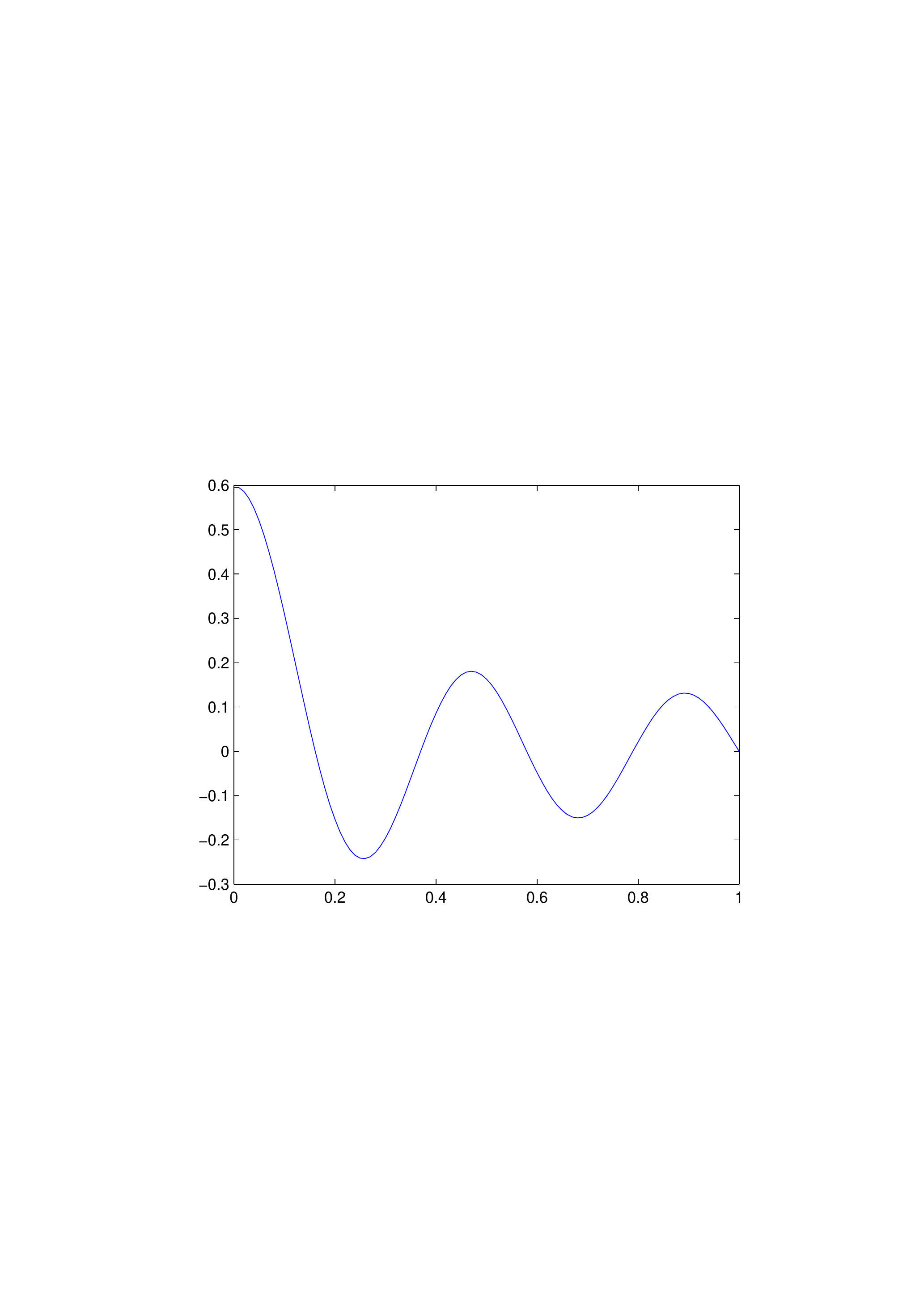}
\includegraphics[height=6cm]{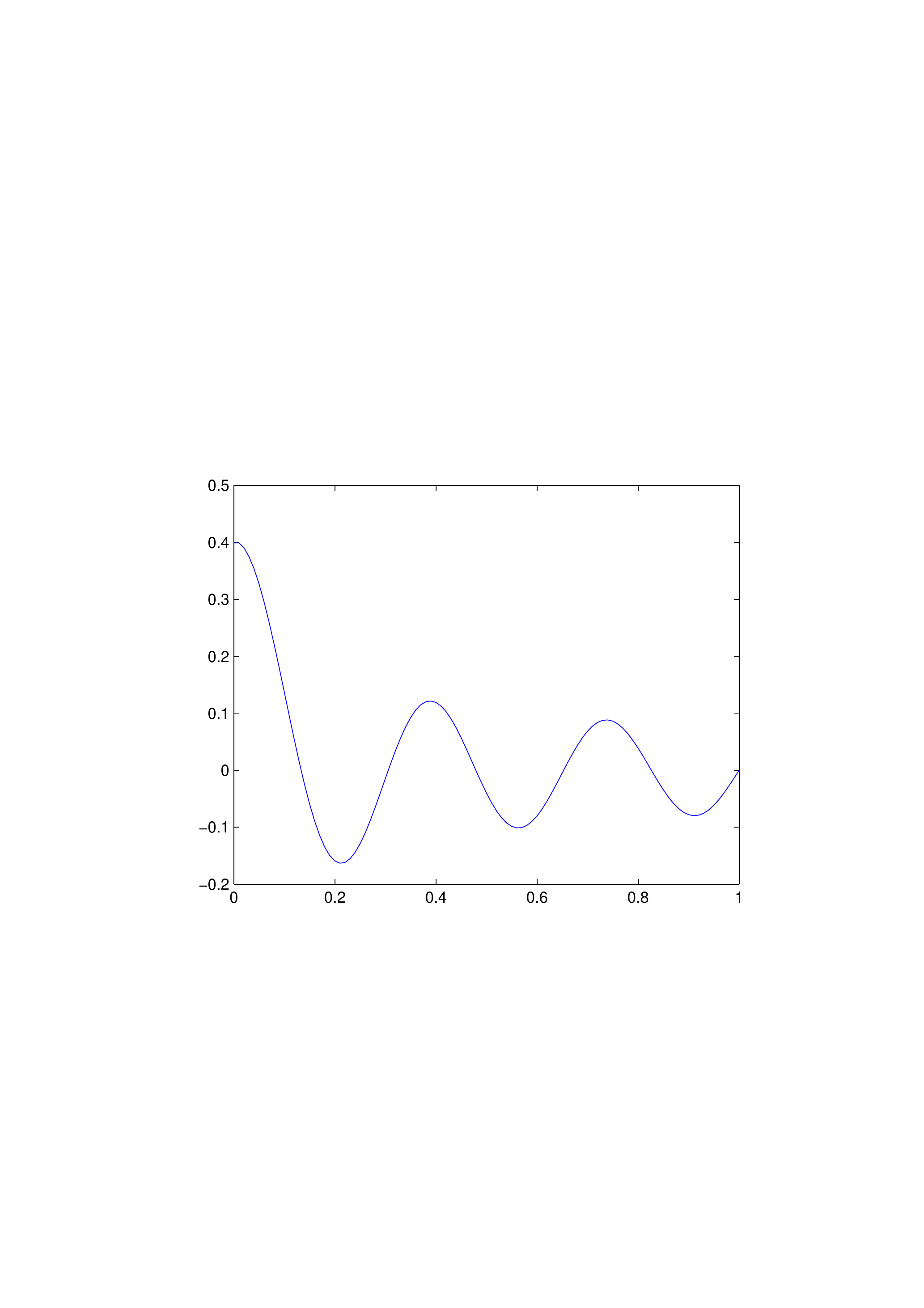}
\includegraphics[height=6cm]{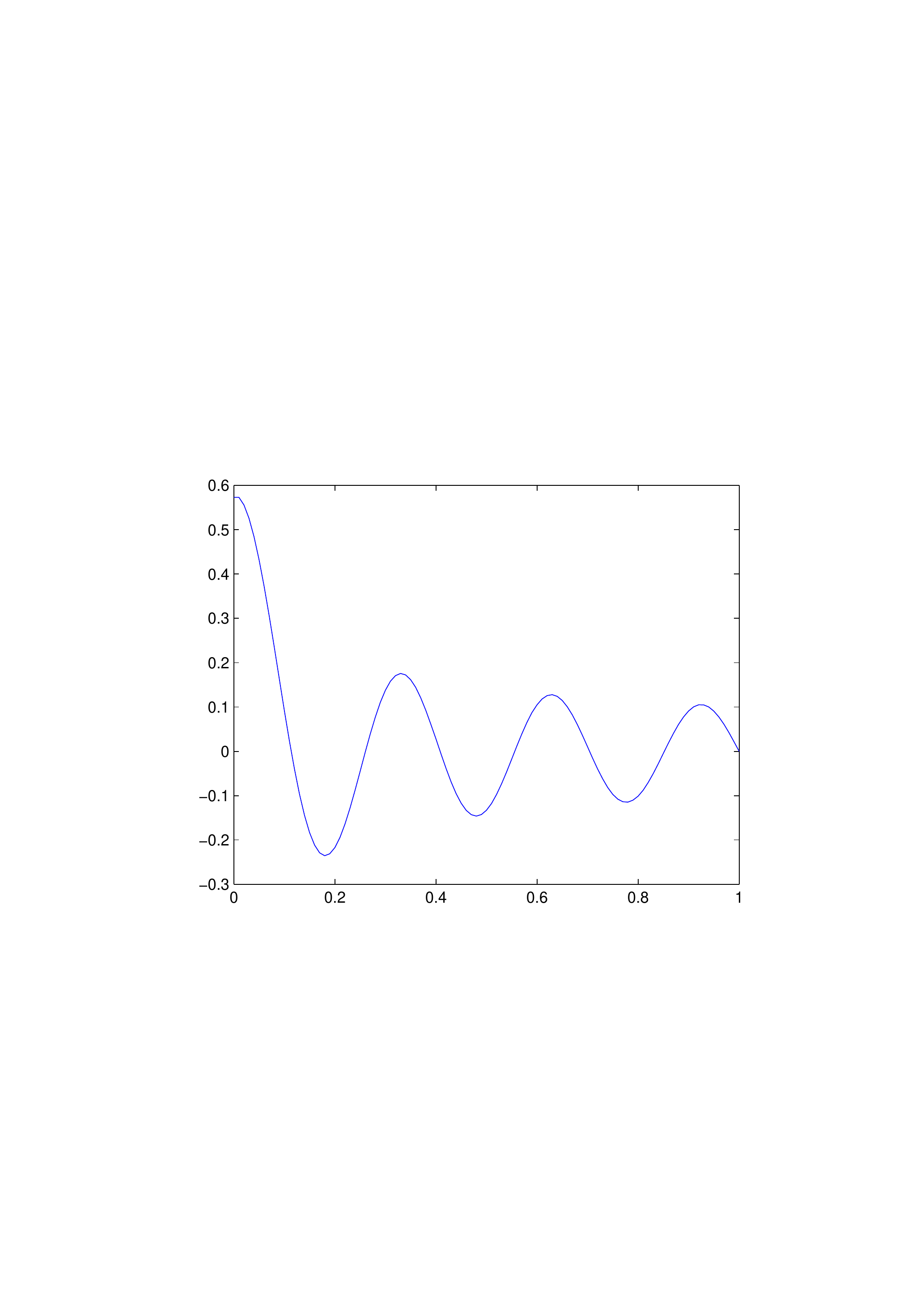}
\includegraphics[height=6cm]{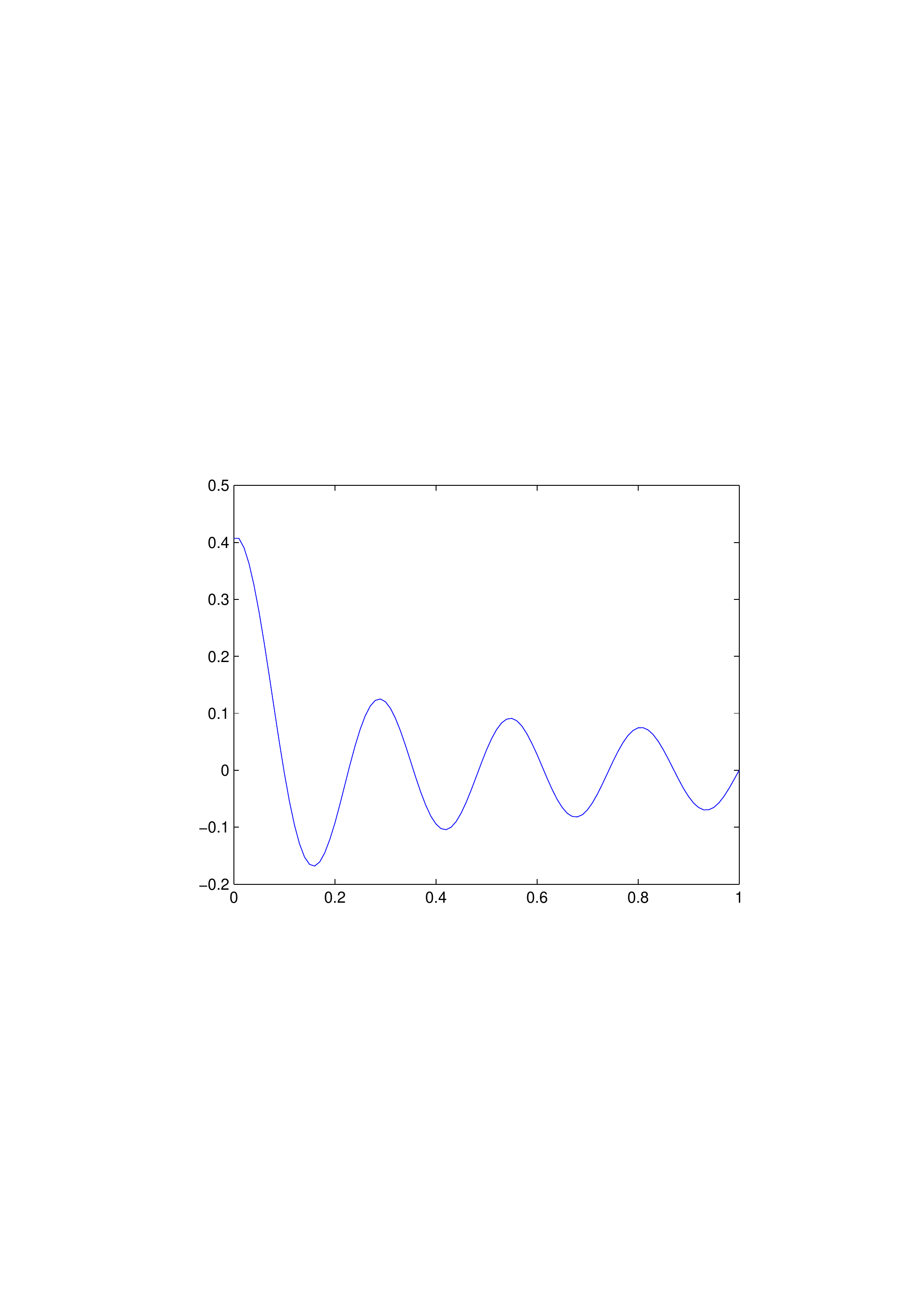}
\end{center}
\caption{First eight Laplacian radial eigenfunctions on the unit disk obtained
from the inverse iteration with shift.}%
\end{figure}

\break

\subsection{Unit square}

The eigenvalues of the unit square $\Omega=[0,1]\times\lbrack0,1]$ are
$\lambda_{n,m}=(n^{2}+m^{2})\pi^{2}$ and the corresponding $L^{\infty}%
$-normalized eigenfunctions are $e_{n,m}=\sin(n\pi x)\sin(m\pi x).$ Hence it
is easy to verify that the spectrum of the function $u\equiv1$ consists
precisely of those eigenvalues $\lambda_{n,m}$ for which both $n$ and $m$ are
odd and that its first three eigenvalues are $\lambda_{1,1}$, $\lambda_{1,3}$
and $\lambda_{3,3}.$ In Table 6 we present exact and approximated eigenvalues
of the Laplacian on this domain. The shift was set $\sigma_{k}:=\lambda
_{k}-0.1$ and a grid containing $201 \times201$ nodes was used. We can see
again that the sequence $\mathcal{R}(v_{n})$ tends to capture only the
eigenvalues $\lambda_{u}^{\sigma}$ that appear on the spectrum of the function
$u\equiv1$, as shown in the last column. Note that the shift $\sigma
_{1,2}:=\lambda_{1,2}-0.1$ is closer to $\lambda_{1,2}$ but, however, the
corresponding sequence $\mathcal{R}(v_{n})$ approaches the correct eigenvalue
$\lambda_{1,1}.$ The same behavior happens with the shifts $\sigma
_{2,2}:=\lambda_{2,2}-0.1$ and $\sigma_{2,3}:=\lambda_{2,3}-0.1$ since they
are closer to $\lambda_{2,2}$ and $\lambda_{2,3},$ respectively, but the
corresponding sequences $\mathcal{R}(v_{n})$ approach to $\lambda_{1,3}.$ The
graphs of the first three eigenfunctions in the spectrum of the unit function
using Algorithm 2 are displayed in Figure 3.

\begin{table}[h]
%\label{squareTable}
%\par
\par
\begin{center}%
\begin{tabular}
[c]{llllll}\hline
$(n,m)$ & $\lambda_{n,m}$ & $\mu_{10}$ & $\gamma_{10}$ & $\mathcal{R}%
(\phi_{10})$ & $\mathcal{R}(v_{10})$\\\hline
$(1,1)$ & $19.7392$ & $19.7388$ & $19.7388$ & $19.7388$ & $19.7388$\\
$(1,2)$ & $49.3480$ & $49.3346$ & $49.3446$ & $49.3446$ & $19.8395$\\
$(2,2)$ & $78.9568$ & $78.9504$ & $78.9504$ & $78.9504$ & $98.6732$\\
$(1,3)$ & $98.6960$ & $98.6796$ & $98.6308$ & $98.6796$ & $98.6796$\\
$(2,3)$ & $128.305$ & $128.285$ & $128.285$ & $128.285$ & $98.7042$\\
$(3,3)\hspace{0.4in}$ & $177.653\hspace{0.4in}$ & $177.620\hspace{0.4in}$ &
$177.620\hspace{0.4in}$ & $177.562\hspace{0.4in}$ & $177.620$\\\hline
\end{tabular}
\end{center}
\caption{Exact and approximated eigenvalues of the Laplacian on the unit
square.}%
\end{table}

In Table 7 we used the sequence $\mu_{n}$ to show the effect of refining the
grid. The shift was chosen $\sigma_{3,3}:=\lambda_{3,3}-0.1$ and 10 iterations
were used. As expected, a finer grid provides a better approximation of the eigenvalue.

\begin{table}[h]
%\label{squareTable}
%\par
\par
\begin{center}%
\begin{tabular}
[c]{lll}\hline
Grid & $\mu_{10}$ & $\epsilon(\mu_{10},\lambda_{3,3})$\\\hline
$100\times100$ & $177.5187$ & $7.6\times10^{-4}$\\
$200\times200$ & $177.6197$ & $1.9\times10^{-4}$\\
$300\times300$ & $177.6382$ & $8.3\times10^{-5}$\\
$400\times400$ & $177.6446$ & $4.7\times10^{-5}$\\
$500\times500$ & $177.6476$ & $3.0\times10^{-5}$\\
$1000\times1000$ & $177.6516$ & $7.4\times10^{-6}$\\
$2000\times2000\hspace{0.4in}$ & $177.6526\hspace{0.4in}$ & $1.9\times10^{-6}%
$\\\hline
\end{tabular}
\end{center}
\caption{Approximated eigenvalues of the Laplacian on the unit square and
relative errors for different grids. The exact eigenvalue is $\lambda_{3,3} =
177.6529$.}%
\end{table}

\begin{figure}[ptb]
\begin{center}
\includegraphics[height=8cm]{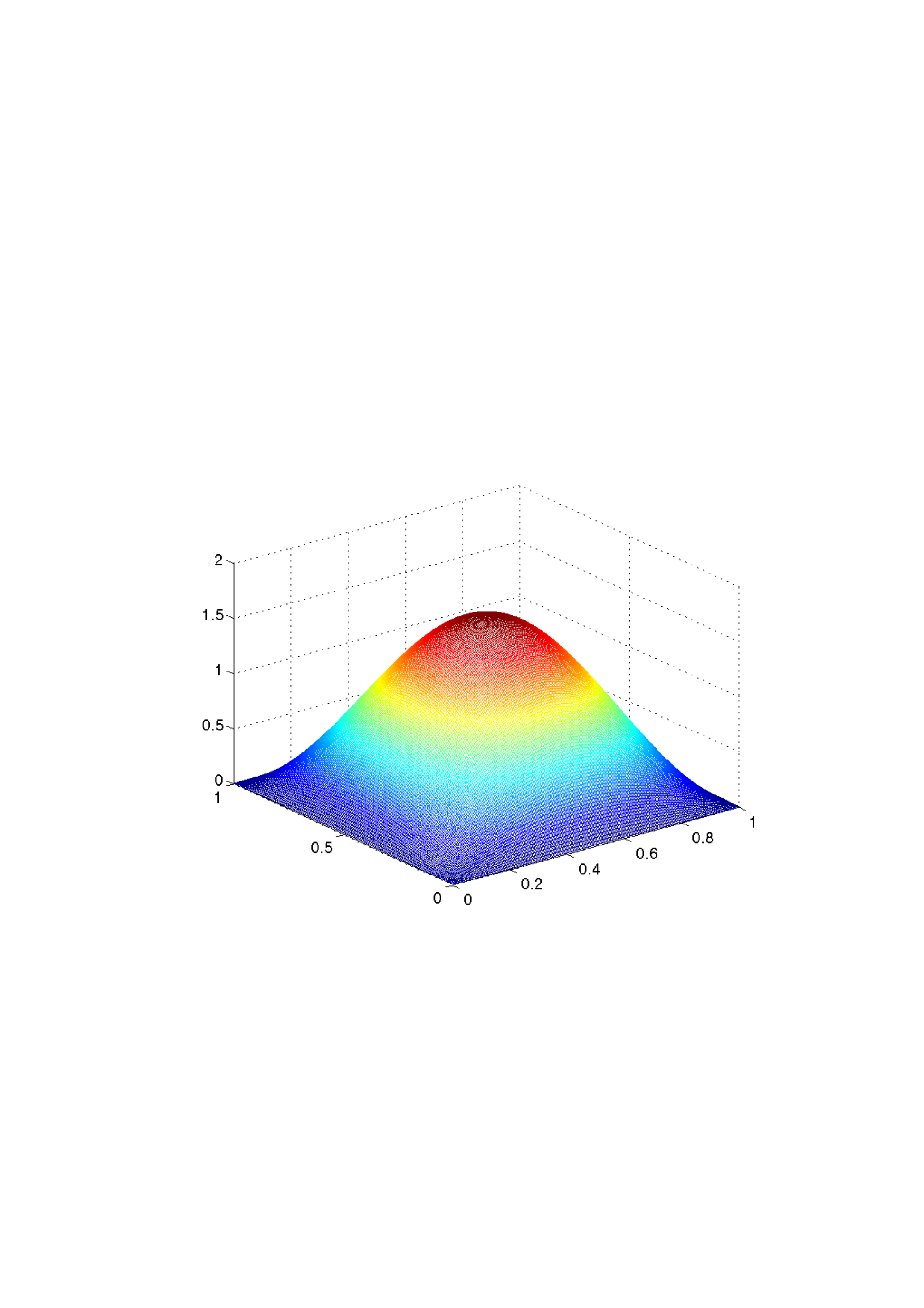} 
\includegraphics[height=8cm]{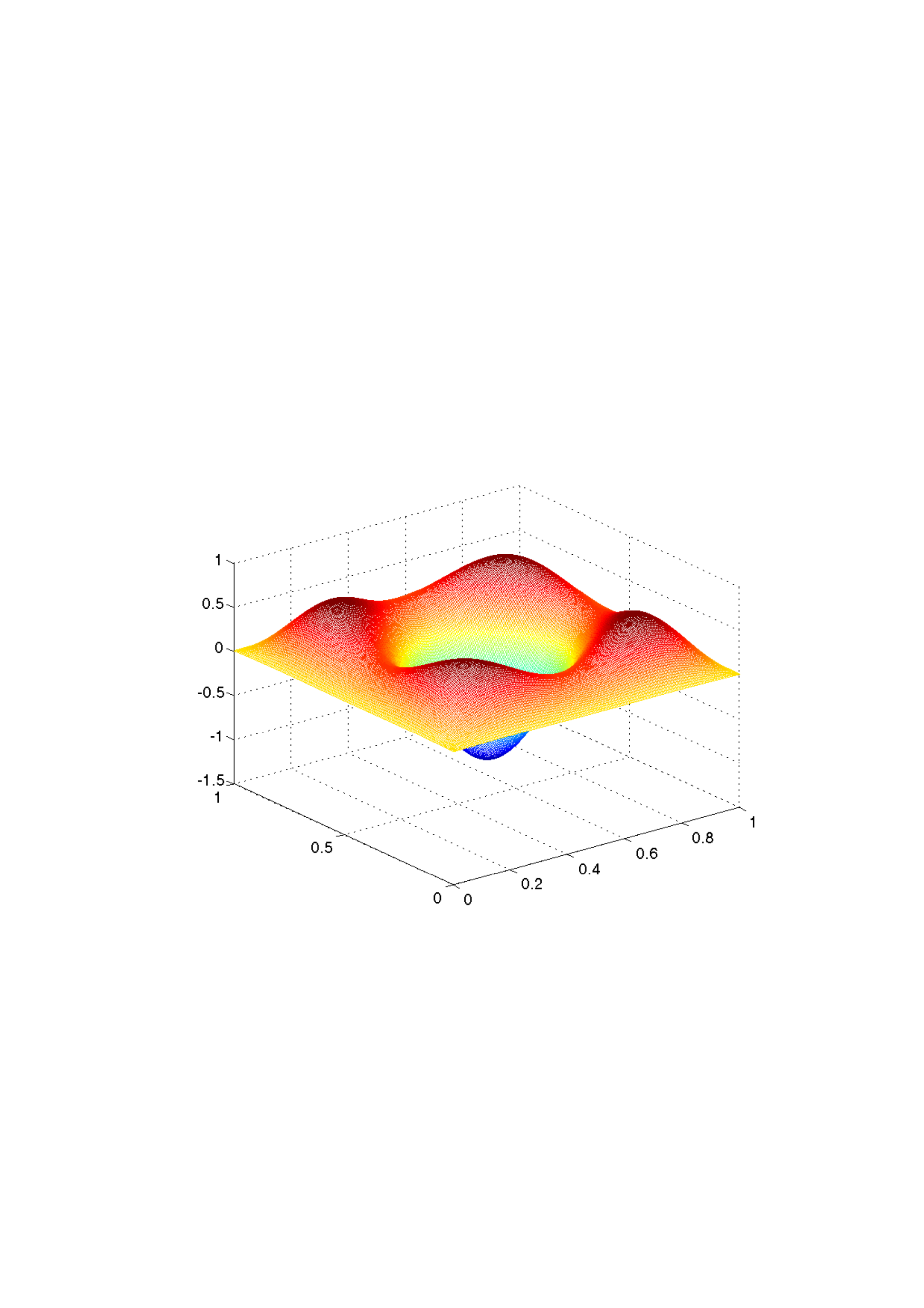}
\includegraphics[height=8cm]{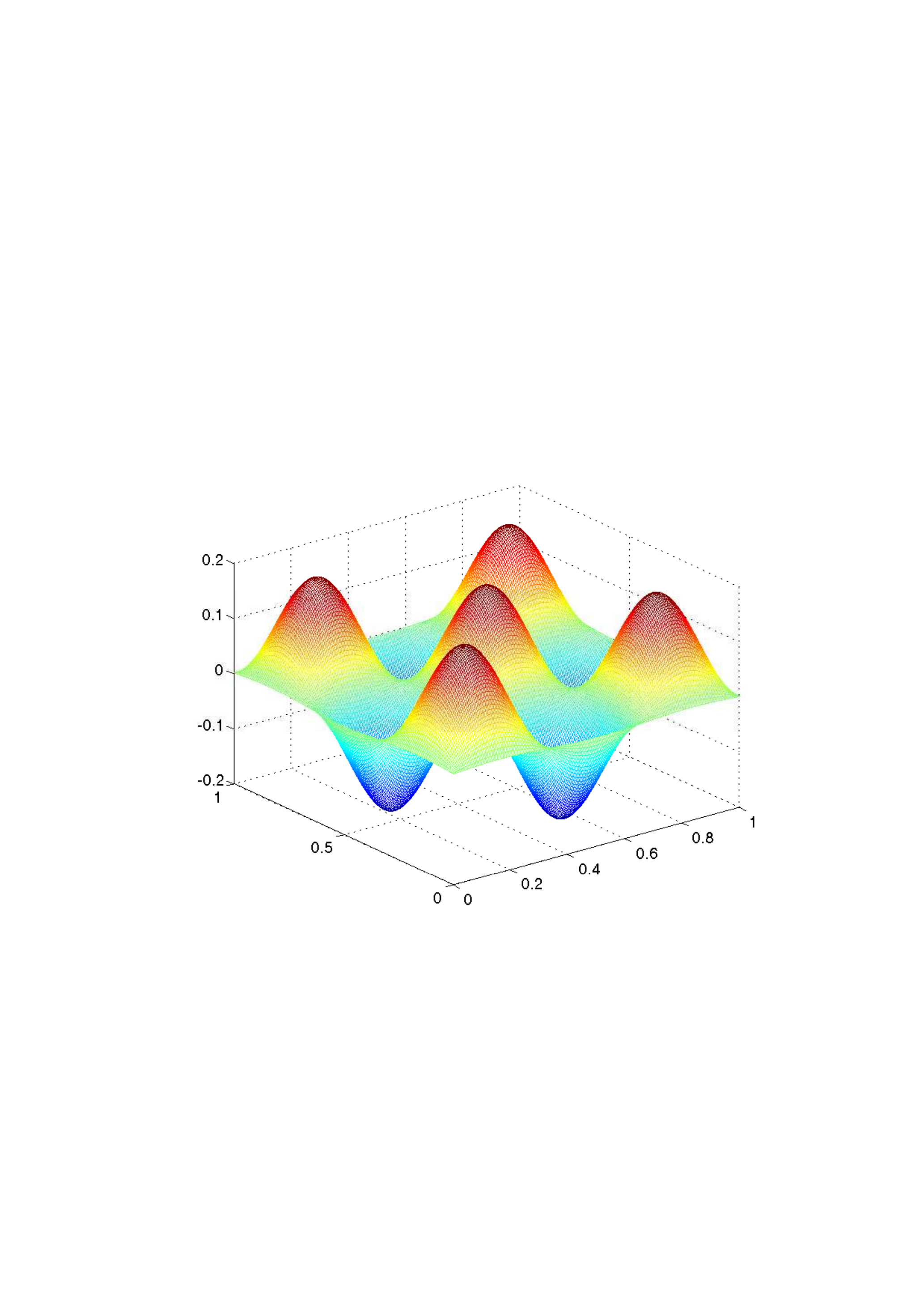}
\vspace{-2cm}
\end{center}
\caption{Graphs of the approximations for the first three projections of the
function $u\equiv1$ on the unit square obtained of the inverse iteration with
shift $\sigma_{n,m}:=\lambda_{n,m}-0.1.$ $e_{u}^{\sigma_{1,1}}$ (left),
$e_{u}^{\sigma_{1,3}}$ (center) and $e_{u}^{\sigma_{3,3}}$ (right).}%
\end{figure}

\section{Final comments\label{final}}

In finite linear algebra, the iterative process itself is often used in order
to generate increasingly better estimates for the eigenvalue at each
iteration, meaning that the approximation obtained at any given iteration is
used as the shift in the next iteration. It turns out that instead of using
the estimates for the eigenvalue obtained in the process, the Rayleigh
quotient of the estimates for the eigenvector obtained at each iteration give
much better approximations for the eigenvalue. Indeed, if the eigenvalues of
the operator or at least very good estimates of them are known in advance,
inverse iteration with shift given by the Rayleigh quotient is the standard
method for computing eigenvalues due to its cubic rate of convergence (see
\cite{Trefenthen-Bau}). It would be only natural to extend such ideas to the
Laplacian, but we were not able to do it. Instead, our (admittedly
preliminary) numerical tests, not shown in this paper, did not indicate
convergence to the correct eigenvalues. As previously discussed, the Rayleigh
quotient may not be a good way to approximate the eigenvalue of high frequency
eigenfunctions unless the grid is much further refined, due to high
oscillations, and the computational cost of using too fine grids can seriously
limit the efficiency of the method. Further investigation is needed. So it
remains an open problem to us if inverse iteration with shift given by the
Rayleigh quotient is a method that can be successfully applied to the Laplacian.

The method described in this paper uses a modification of the Rayleigh
quotient that avoids the computation of gradients. Our numerical tests shown
in Section 6 indicate a greater degree of convergence to the correct
eigenvalues when this form is used. It seems to us that the non-necessity of
calculating the gradients makes our algorithm more numerically stable.

The only reference we could find where the Rayleigh quotient was used in
computing the eigenvalues of the Laplacian, and only for polygonal domains,
was the work \cite{Descloux-Tolley}; however the Rayleigh quotient was only
indirectly used there, as one component of another algorithm and in a very
different way from the direct approach we follow here.

\section{Acknowledgments}

The authors thank the support of FAPEMIG and CNPq - Brazil.

\end{document}